\documentclass[11pt]{amsart}
\usepackage[margin=1in,letterpaper]{geometry}
\usepackage{enumitem,amsthm,amsmath,amssymb,mathrsfs,graphicx,esint,array,pgf,tikz,lastpage,hyperref,comment}
\usetikzlibrary{arrows,shapes}
\usetikzlibrary{arrows.meta,bending,automata}
\usetikzlibrary{decorations.pathreplacing}

\newtheoremstyle{mythm}
{.5\baselineskip}	
{.5\baselineskip}	
{}		
{}		
{\bf}	
{. }		
{ }		
{}		

\theoremstyle{mythm}
\newtheorem{theorem}{Theorem}[section] 
\newtheorem{lemma}[theorem]{Lemma}
\newtheorem{proposition}[theorem]{Proposition}
\newtheorem{corollary}[theorem]{Corollary}
\newtheorem{definition}[theorem]{Definition}
\newtheorem{example}[theorem]{Example}

\newtheorem{remark}[theorem]{Remark}
\newtheorem{question}{Question}

\setlist[itemize]{leftmargin=2em}

\newcommand{\notimplies}{\;\not\!\!\!\implies}

\title{An Excursion with Divergence Properties}
\author{Christopher Caruvana}
\address{School of Sciences, Indiana University Kokomo, 2300 S. Washington Street, Kokomo, IN 46902 USA}
\email{caruvana@gmail.com}
\urladdr{https://chcaru.pages.iu.edu/}
\date{\today}
\keywords{closed discrete selection, selectively highly divergent, statistical convergence of order \(\alpha\), selection principles, selection games, limited-information strategies}
\subjclass{54A20, 91A44, 40A35, 54B20, 54C35}

\begin{document}

\begin{abstract}
	In this note, we compare and contrast various selective divergence properties
	such as the properties of being discretely selective and selectively highly divergent.
	We identify and incorporate a class of subsemigroups of the semigroup of strictly increasing maps
	from the naturals to themselves.
	We investigate certain implications for hyperspaces of finite subsets and characterize the closed discrete selection
	game on a space in terms of a particular selection game on the Vietoris hyperspace of finite subsets of that space.
	We also isolate some sufficient conditions on a space that guarantee that the corresponding Pixley-Roy hyperspace
	of finite subsets is discretely selective.
	We end by noting that the properties of being discretely selective and of being selectively highly divergent are equivalent
	in rings of continuous functions with standard topologies of uniform convergence.
\end{abstract}

\maketitle

\section{Introduction}

A space \(X\) is said to be \emph{discretely selective} if, given any sequence \(\langle U_n : n \in \mathbb N \rangle\)
of nonempty open subsets of \(X\), there exists a selection \(x_n \in U_n\) for each \(n \in \mathbb N\) such that
\(\{ x_n : n \in \mathbb N \}\) is a closed and relatively discrete subset of \(X\).
The property of being discretely selective first appeared in \cite[Lemma 3.8.1]{GuerreroSanchezTkachuk} and later gained its name in \cite{Tkachuk};
it has been studied mostly in the context of continuous function spaces, as evidenced by the references above.

A space \(X\) is said to be \emph{selectively highly divergent}, or SHD in short, if, given any sequence \(\langle U_n : n \in \mathbb N \rangle\)
of nonempty open subsets of \(X\), there exists a selection \(x_n \in U_n\) for each \(n \in \mathbb N\) such that
the sequence \(\langle x_n : n \in \mathbb N \rangle\) has no convergent subsequence.
The property of being selectively highly divergent first appeared in \cite{SHDFirst} and was expanded on in both \cite{BellaSpadaro} and \cite{RemarksOnSHD}.

Note that, if the sequence of selections forms an injective function \(\mathbb N \to X\), then the former property implies the latter property.
This property is our inspiration for Definition \ref{def:InjDiscSel}.
However, any discrete space is discretely selective but not SHD.
This leads one to wonder exactly under what conditions we can guarantee an implication between the two properties.
Observe also that these two properties give rise to particular topological selection games, which will be the principal protagonists throughout this work.

As a slight modification to the notion of SHD, appearing in \cite{RemarksOnSHD},
a space \(X\) is said to be \emph{weakly selectively highly divergent}, or WSHD in short,
if, given any sequence \(\langle U_n : n \in \mathbb N \rangle\) of nonempty open subsets of \(X\), there exists an infinite set \(A \subseteq \mathbb N\)
and a selection \(x_n \in U_n\) for each \(n \in A\) such that the sequence \(\langle x_n : n \in A \rangle\) has no convergent subsequence.
In \cite{RemarksOnSHD}, Question 4 asks whether the property of being SHD is equivalent to WSHD.
We will reflect a bit on this question and expand it to other contexts.

The primary objective of this note is to compare and contrast the above-mentioned notions in some level of detail.
Part of this evaluation is done by providing \(\mathsf{ZFC}\)
examples of spaces that disagree on the properties of discrete selectivity and SHD, but also noting that the two properties are equivalent
in the context of continuous function rings with certain topologies of uniform convergence (Theorem \ref{thm:MainFunctionTheorem}).
Despite their equivalence in continuous function rings, we give examples establishing that these properties need not be equivalent in the more
general class of all topological groups (Examples \ref{ex:CirclePower} and \ref{ex:2splitting}).

Another aspect of this evaluation is to investigate situations in which the Pixley-Roy hyperspace \(\mathcal F[X]\) of finite subsets of a space \(X\)
enjoys discrete selectivity.
This topic in the context of SHD has been thoroughly developed through \cite{SHDFirst,BellaSpadaro,RemarksOnSHD}, and we extend this genre of research
by isolating particular sufficient conditions on \(X\) to guarantee that \(\mathcal F[X]\) is discretely selective (Theorem \ref{thm:SufficientFXDS}).

We also elaborate on the theory of the hyperspace \(\mathcal P_{\mathrm{fin}}(X)\) of finite subsets of \(X\) viewed as a subspace of the Vietoris
hyperspace \(\mathbb K(X)\) of compact subsets.
This hyperspace has proved to be useful in capturing certain properties typically couched in terms of a property being enjoyed by all finite powers of
a space (see \cite{CHVietoris,C_HurewiczVietoris}).
In this context, we offer an equivalence between the closed discrete selection game on \(X\) and a particular selection game on \(\mathcal P_{\mathrm{fin}}(X)\)
(Theorem \ref{thm:PfinCD}).
As a particular consequence of this equivalence is the fact that, if \(X\) is discretely selective, then so is \(\mathcal P_{\mathrm{fin}}(X)\).
We also show, through a general strategic transferral result (Theorem \ref{thm:HyperSDS}), that, if \(X\) is SHD or WSHD, then so is \(\mathcal P_{\mathrm{fin}}(X)\).

In our efforts to reflect on potential differences between the SHD and WSHD properties, we have included Section \ref{sec:Asymptotic}
primarily as a way to robustly stratify the defining conditions.
We have focused our attention in this section on establishing some basic structural results for a particular variant of asymptotic density,
\(\alpha\)-density introduced in \cite{Colak}.
The main contribution of this section is Theorem \ref{thm:Dalpha} which asserts that, given two subsets of the naturals expressed as corresponding
enumerations \(\phi\) and \(\psi\) which have complements that are of \(\alpha\)-density zero, then the subset of the naturals corresponding
to \(\phi \circ \psi\) also has a complement of \(\alpha\)-density zero.
This mirrors a classical fact about the traditional asymptotic density.
We incorporate these notions in results throughout this work.

In the final section, we expand the contents of \cite[Theorem 13]{RemarksOnSHD} to include the discretely selective property on \(C_p(X)\)
as a corollary to a more general result (Theorem \ref{thm:MainFunctionTheorem}).
From the more general result, we obtain the analogous equivalence of properties relative to \(C_k(X)\), as well.

Throughout, we include, as relevant, questions for further investigation.

In the interest of space, we have limited historical remarks and trust that the reader may find additional information
and context in the references of the papers cited within.

\section{Background and Preliminaries}

\subsection{Basics}

For convenience, we will use \(\mathbb N\) to represent the set of \emph{positive} integers.
This choice will be motivated by Remark \ref{rmk:BasicAsymptoticThing}.
We will use \(\omega\) to represent the first infinite ordinal, and we will identify \(\mathbb N \cup \{0\}\) with \(\omega\).
For sets \(A\) and \(B\), we use \(A^B\) to be the set of all functions \(B \to A\);
we will use \(A^{<\mathbb N}\) to denote \[\{ \langle \rangle \} \cup \bigcup \{ A^{\{j \in \mathbb N : j \leq n \}} : n \in \mathbb N \}.\]
We also use the notation \(\# A\) to refer to the cardinality of the set \(A\).
If \(\kappa\) is a cardinal number and \(X\) is a set, we will use \([X]^\kappa\) to denote the set of all \(A \subseteq X\) with \(\# A = \kappa\).
Similarly, \([X]^{<\kappa}\) will be used to denote the set of all \(A \subseteq X\) with \(\# A < \kappa\);
when relevant, we will, without explicit mention, omit \(\varnothing\) from \([X]^{<\kappa}\) to avoid certain trivialities.
Given a set \(X\), we will use the phrase \emph{faithful enumeration} to refer to a bijection with a cardinal number.
We refer the reader to \cite{Kunen} for particular notation regarding cardinal numbers and \cite{Engelking} for any topological notions
herein undefined.
Throughout, for a function \(s : \mathbb N \to X\) for a set \(X\), we will use \(s_n\) and \(s(n)\) interchangeably.

By a \emph{space}, we mean any nonempty topological space.
Any assumptions regarding separation axioms will be explicitly stated when relevant.
For a space \(X\) and a set \(A\), we will use the notation \(X^A\) to represent the set
of all functions \(A \to X\) endowed with the usual Tychonoff product topology.

For a space \(X\), we will use \(\mathscr T_X\) to represent the set of all nonempty open subsets of \(X\).
For a point \(x\) of the space \(X\), we denote the neighborhood filter at \(x\) by \(\mathcal N_{X,x} = \{ U \in \mathscr T_X : x \in U \}\).

For a sequence \(s \in X^{\mathbb N}\) of a space \(X\) and a point \(x \in X\), we write \(s \to x\) to mean that \(s\) converges to \(x\);
that is, \(s \to x\) if, for every \(U \in \mathcal N_{X,x}\), there exists \(M \in \mathbb N\) such that, for every \(n \geq M\),
\(s(n) \in U\).
Let \(\Gamma_{X,x} = \{ s \in X^{\mathbb N} : s \to x \}\) and \(\mathrm{CS}_X = \bigcup \{ \Gamma_{X,x} : x \in X \}\).
We use the notation \(\mathrm{CS}_X\) to reflect the phrase \emph{convergent sequences of \(X\)}.

As the final notational convention we introduce in this subsection, we let \(\mathrm{CD}_X\) denote the set of all closed and relatively discrete
subsets of \(X\); that is, \(\mathrm{CD}_X\) consists of all \(A \subseteq X\) such that \(A\) is closed and, for every \(x \in X\), there
exists \(U\in\mathscr T_X\) such that \(U \cap A = \{ x \}\).

Throughout, we will also use symbols like \(\mathscr T\) and \(\mathrm{CD}\) to function as \emph{topological operators};
that is, functions on the class of topological spaces that produce sets relevant to the given space's topological structure.

\subsection{Selection Principles and Games}
Selection principles and topological games have been topics of general interest for many years.
For a general overview of selection principles, see \cite{KocinacSelectedResults,ScheepersNoteMat,ScheepersSelectionPrinciples};
for more on the history of topological games, see \cite{TelgarskySurvey}.

\begin{definition}
	For topological operators or sets \(\mathcal A\) and \(\mathcal B\),
	\[
		\mathsf{S}_1(\mathcal A, \mathcal B)
		\equiv \left( \forall A \in \mathcal A^{\mathbb N} \right) \left( \exists x \in \prod_{n\in\mathbb N} A_n \right) \{x_n : n \in \mathbb N\} \in \mathcal B.
	\]
\end{definition}
As we will see here, it is sometimes important to retain the selections in their sequential form.
\begin{definition}
	For topological operators or sets \(\mathcal A\) and \(\mathcal B\),
	\[
		\vec{\mathsf{S}}_1(\mathcal A, \mathcal B)
		\equiv \left( \forall A \in \mathcal A^{\mathbb N} \right) \left( \exists x \in \prod_{n\in\mathbb N} A_n \right) \langle x_n : n \in \mathbb N \rangle \in \mathcal B.
	\]
\end{definition}
Following \cite{ScheepersNoteMat}, for a space \(X\) and topological operators \(\mathcal A\) and \(\mathcal B\),
we write \(X \models \mathsf S^\ast_1(\mathcal A, \mathcal B)\), where \(\mathsf S^\ast \in \left\{\mathsf S, \vec{\mathsf S} \right\}\),
to mean that \(X\) satisfies the selection principle \(\mathsf S^\ast_1(\mathcal A_X, \mathcal B_X)\).
\begin{remark}
	For a space \(X\) and a set \(\mathcal B \subseteq X^{\mathbb N}\), the property \(\vec{\mathsf S}_1(\mathscr T_X, \mathcal B)\)
	is equivalent to the assertion that \(\mathcal B\) is dense in \(\mathsf b(X^{\mathbb N})\), where \(\mathsf b(X^{\mathbb N})\) represents
	the set \(X^{\mathbb N}\) endowed with the box topology.
\end{remark}
\begin{remark}
	A space \(X\) is discretely selective if and only if \(X \models \mathsf S_1(\mathscr T, \mathrm{CD})\).
\end{remark}

Selection principles naturally give rise to topological selection games.
\begin{definition}
	For topological operators or sets \(\mathcal A\) and \(\mathcal B\), the single-selection game \(\mathsf G_1(\mathcal A, \mathcal B)\) is as follows.
	There are two players, P1 and P2, and the game has \(\mathbb N\) rounds.
	In the \(n^{\mathrm{th}}\) round, P1 chooses \(A_n \in \mathcal A\) and P2 responds with \(x_n \in A_n\).
	P2 is declared the winner if \(\{x_n : n \in \mathbb N\} \in \mathcal B\).
	Otherwise, P1 wins.
\end{definition}
We will reference the finite-selection game in this work as well.
\begin{definition}
	For topological operators or sets \(\mathcal A\) and \(\mathcal B\), the finite-selection game \(\mathsf G_{\mathrm{fin}}(\mathcal A, \mathcal B)\) is as follows.
	There are two players, P1 and P2, and the game has \(\mathbb N\) rounds.
	In the \(n^{\mathrm{th}}\) round, P1 chooses \(A_n \in \mathcal A\) and P2 responds with \(F_n \in [A_n]^{<\aleph_0}\).
	P2 is declared the winner if \(\bigcup\{F_n : n \in \mathbb N\} \in \mathcal B\).
	Otherwise, P1 wins.
\end{definition}
As with selection principles, we will require the sequential form of single-selection games throughout this work.
\begin{definition}
	For topological operators or sets \(\mathcal A\) and \(\mathcal B\), the game \(\vec{\mathsf G}_1(\mathcal A, \mathcal B)\) is as follows.
	There are two players, P1 and P2, and the game has \(\mathbb N\) rounds.
	In the \(n^{\mathrm{th}}\) round, P1 chooses \(A_n \in \mathcal A\) and P2 responds with \(x_n \in A_n\).
	P2 is declared the winner if \(\langle x_n : n \in \mathbb N\rangle \in \mathcal B\).
	Otherwise, P1 wins.
\end{definition}
For additional commentary on the sequential versions of selection principles and games, see \cite{CHStar}.

We will use the phrase \emph{selection games} to refer to the collection of all games defined above.
We will refer to \(\mathsf G_1(\mathscr T_X, \mathrm{CD}_X)\) as the \emph{closed discrete selection game on \(X\)}.

Any game generates notions of strategies.
We will be considering here full- and limited-information strategies.
Since the sequential single-selection game is the principal game in this work, we will define the relevant strategy types for it
explicitly here and refer the reader to \cite{CCH_Menger} for more on the other games.
\begin{definition}
    We define strategies of various strengths below.
    \begin{itemize}
    \item
    A \emph{strategy for P1} in \(\vec{\mathsf{G}}_1(\mathcal A, \mathcal B)\) is a function
    \(\sigma:(\bigcup \mathcal A)^{<\mathbb N} \to \mathcal A\).
    A strategy \(\sigma\) for P1 is called \emph{winning} if whenever \(x_n \in \sigma\langle x_k : 1 \leq k < n \rangle\)
    for all \(n \in \mathbb N\), \(\langle x_n: n\in\mathbb N\rangle \not\in \mathcal B\).
    If P1 has a winning strategy, we write \(\mathrm{I} \uparrow \vec{\mathsf{G}}_1(\mathcal A, \mathcal B)\).
    \item
    A \emph{strategy for P2} in \(\vec{\mathsf{G}}_1(\mathcal A, \mathcal B)\) is a function
    \(\tau:\mathcal A^{<\mathbb N} \to \bigcup \mathcal A\) such that \(\tau(A_1,\ldots,A_n) \in A_n\) for any \(n \in \mathbb N\)
    and any finite sequence \(\langle A_1 , \ldots, A_n \rangle\) of \(\mathcal A\).
    A strategy \(\tau\) for P2 is \emph{winning} if, whenever \(A_n \in \mathcal A\) for all \(n \in \mathbb N\),
    \(\langle \tau(A_1, \ldots, A_n) : n \in \mathbb N \rangle \in \mathcal B\).
    If P2 has a winning strategy, we write \(\mathrm{II} \uparrow \vec{\mathsf{G}}_1(\mathcal A, \mathcal B)\).
    \item
    A \emph{predetermined strategy} for P1 is a strategy which only considers the current turn number.
    Formally, it is a function \(\sigma: \mathbb N \to \mathcal A\).
    If P1 has a winning predetermined strategy, we write \(\mathrm{I} \underset{\mathrm{pre}}{\uparrow} \vec{\mathsf{G}}_1(\mathcal A, \mathcal B)\).
    \item
    A \emph{Markov strategy} for P2 is a strategy which only considers the most recent move of P1 and the current turn number.
    Formally, it is a function \(\tau :\mathcal A \times \mathbb N \to \bigcup \mathcal A\) such that \(\tau(A,n) \in A\) for every \(A \in \mathcal A\).
    If P2 has a winning Markov strategy, we write
    \(\mathrm{II} \underset{\mathrm{mark}}{\uparrow} \vec{\mathsf{G}}_1(\mathcal A, \mathcal B)\).
    \item
    If there is a single element \(A_0 \in \mathcal A\) so that the constant function with value \(A_0\) is a winning strategy for P1,
    we say that P1 has a \emph{constant winning strategy}, denoted by
    \(\mathrm{I} \underset{\mathrm{cnst}}{\uparrow} \vec{\mathsf{G}}_1(\mathcal A, \mathcal B)\).
    \end{itemize}
\end{definition}
Note that, for any selection game \(\mathcal G\),
\[
    \mathrm{II} \underset{\mathrm{mark}}{\uparrow} \mathcal G
    \implies \mathrm{II} \uparrow \mathcal G
    \implies \mathrm{I} \not\uparrow \mathcal G
    \implies \mathrm{I} \underset{\mathrm{pre}}{\not\uparrow} \mathcal G
    \implies \mathrm{I} \underset{\mathrm{cnst}}{\not\uparrow} \mathcal G.
\]
\begin{definition}
    For two selection games \(\mathcal G\) and \(\mathcal H\), we write \(\mathcal G \leq_{\mathrm{II}} \mathcal H\)
    if each of the following hold:
    \begin{itemize}
        \item 
        \(\mathrm{II} \underset{\mathrm{mark}}{\uparrow} \mathcal G \implies \mathrm{II} \underset{\mathrm{mark}}{\uparrow} \mathcal H\),
        \item 
        \(\mathrm{II} \uparrow \mathcal G \implies \mathrm{II} \uparrow \mathcal H\),
        \item 
        \(\mathrm{I} \not\uparrow \mathcal G \implies \mathrm{I} \not\uparrow \mathcal H\), and
        \item 
        \(\mathrm{I} \underset{\mathrm{pre}}{\not\uparrow} \mathcal G \implies \mathrm{II} \underset{\mathrm{pre}}{\not\uparrow} \mathcal H\).
    \end{itemize}
    If, in addition,
    \begin{itemize}
        \item 
        \(\mathrm{I} \underset{\mathrm{cnst}}{\not\uparrow} \mathcal G \implies \mathrm{II} \underset{\mathrm{cnst}}{\not\uparrow} \mathcal H,\)
    \end{itemize}
    we write that \(\mathcal G \leq^+_{\mathrm{II}} \mathcal H\).
\end{definition}
\begin{remark} \label{remark:LindelofAndSelection}
    The following are mentioned in \cite[Prop. 15]{ClontzDualSelection} and \cite[Lemma 2.12]{CHVietoris}:
    \begin{itemize}
        \item 
        \(\mathrm{I} \underset{\mathrm{pre}}{\not\uparrow} \mathsf{G}_1(\mathcal{A},\mathcal{B})\)
        is equivalent to \(\mathsf{S}_1(\mathcal A, \mathcal B)\).
        \item
        \(\mathrm{I} \underset{\mathrm{cnst}}{\not\uparrow} \mathsf{G}_1(\mathcal{A},\mathcal{B})\)
        is equivalent to the property that, for every \(A \in \mathcal A\), there is \(B \in [A]^{\leq \aleph_0}\)
        so that \(B \in \mathcal B\).
    \end{itemize}
    In a similar way, one can see that \(\mathrm{I} \underset{\mathrm{pre}}{\not\uparrow} \vec{\mathsf{G}}_1(\mathcal{A},\mathcal{B})\)
    is equivalent to \(\vec{\mathsf{S}}_1(\mathcal A, \mathcal B)\).
\end{remark}
In particular, note that, if \(\vec{\mathsf G}_1(\mathcal A_1, \mathcal B_1) \leq_{\mathrm{II}} \vec{\mathsf G}_1(\mathcal A_2, \mathcal B_2)\), then,
for any space \(X\), \(X \models \vec{\mathsf S}_1(\mathcal A_1, \mathcal B_2)\) implies that \(X \models \vec{\mathsf S}_1(\mathcal A_2, \mathcal B_2)\).

Observe that \(\leq_{\mathrm{II}}\) and \(\leq_{\mathrm{II}}^+\) are both transitive and reflexive.
Hence, they induce equivalence relations on the class of selection games.
\begin{definition}
	For selection games \(\mathcal G\) and \(\mathcal H\), we will write
	\begin{itemize}
		\item
		\(\mathcal G \equiv \mathcal H\) if \(\mathcal G \leq_{\mathrm{II}} \mathcal H\) and \(\mathcal H \leq_{\mathrm{II}} \mathcal G\);
		\item
		\(\mathcal G \rightleftarrows \mathcal H\) if \(\mathcal G \leq_{\mathrm{II}}^+ \mathcal H\) and \(\mathcal H \leq_{\mathrm{II}}^+ \mathcal G\).
	\end{itemize}
\end{definition}

We note that the sequential single-selection version of \cite[Corollary 2.17]{CHVietoris} obtains.
\begin{theorem} \label{thm:TranslationTheorem}
	Let \(\mathcal A\), \(\mathcal B\), \(\mathcal C\), and \(\mathcal D\) be collections.
	Suppose, for each \(n \in \mathbb N\), there are functions
	\begin{itemize}
		\item
		\(\overleftarrow{\mathsf T}_{\mathrm{I},n} : \mathcal B \to \mathcal A\) and
		\item
		\(\overrightarrow{\mathsf T}_{\mathrm{II},n} : \left( \bigcup \mathcal A \right) \times \mathcal B \to \bigcup \mathcal B\)
	\end{itemize}
	such that the following properties hold:
	\begin{itemize}
		\item
		for any \(B \in \mathcal B\) and \(x \in \overleftarrow{\mathsf T}_{\mathrm{I},n}(B)\), \(\overrightarrow{\mathsf T}_{\mathrm{II},n}(x,B) \in B\) and
		\item
		if \(x_n \in \overleftarrow{\mathsf T}_{\mathrm{I},n}(B_n)\) for each \(n \in \mathbb N\) and
		\(\langle x_n : n \in \mathbb N \rangle \in \mathcal C\), then
		\(\left\langle \overrightarrow{\mathsf T}_{\mathrm{II},n}(x_n,B_n) : n \in \mathbb N \right\rangle \in \mathcal D\).
	\end{itemize}
	Then \(\vec{\mathsf G}_1(\mathcal A, \mathcal C) \leq_{\mathrm{II}} \vec{\mathsf G}_1(\mathcal B, \mathcal D)\).
	
	If, in addition, \(\overleftarrow{\mathsf T}_{\mathrm{I},n} = \overleftarrow{\mathsf T}_{\mathrm{I},m}\) for all \(n,m\in\mathbb N\),
	then \(\vec{\mathsf G}_1(\mathcal A, \mathcal C) \leq^+_{\mathrm{II}} \vec{\mathsf G}_1(\mathcal B, \mathcal D)\).
\end{theorem}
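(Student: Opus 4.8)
The plan is to use a single round-by-round \emph{translation} of strategies, running from the source game \(\vec{\mathsf G}_1(\mathcal A,\mathcal C)\) to the target game \(\vec{\mathsf G}_1(\mathcal B,\mathcal D)\) for P2 and in the reverse direction for P1; this is the sequential single-selection adaptation of the argument for \cite[Corollary 2.17]{CHVietoris}. For the P2 direction: given a strategy \(\tau\) for P2 in the source game, whenever P1 has played \(B_1,\dots,B_n\in\mathcal B\) in the target game, put \(A_k:=\overleftarrow{\mathsf T}_{\mathrm{I},k}(B_k)\) for \(k\le n\), feed \(A_1,\dots,A_n\) to \(\tau\) to obtain its response \(x_n\) (so \(x_n\in A_n\)), and have P2 answer \(\overrightarrow{\mathsf T}_{\mathrm{II},n}(x_n,B_n)\). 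The first bulleted hypothesis makes this a legal move (it lies in \(B_n\)); the second hypothesis makes victories transfer, since whenever the simulated source run \(\langle x_n\rangle\) --- a legal run for P2 against the P1 moves \(\langle A_n\rangle\) --- lies in \(\mathcal C\), the actual target run \(\langle\overrightarrow{\mathsf T}_{\mathrm{II},n}(x_n,B_n)\rangle\) lies in \(\mathcal D\). Thus a winning \(\tau\) produces a winning translated strategy; because round \(n\) of the translation uses only \(B_n\) (and \(n\)), a winning Markov strategy translates to a winning Markov strategy (replace \(\tau(A_1,\dots,A_n)\) by \(\tau(A_n,n)\)). Applying the same round-by-round translation to a bare sequence \(\langle B_n\rangle\in\mathcal B^{\mathbb N}\) rather than to a strategy, and invoking Remark \ref{remark:LindelofAndSelection}, handles the predetermined clause, which amounts to \(\vec{\mathsf S}_1(\mathcal A,\mathcal C)\implies\vec{\mathsf S}_1(\mathcal B,\mathcal D)\).

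For the clause \(\mathrm{I}\not\uparrow\vec{\mathsf G}_1(\mathcal A,\mathcal C)\implies\mathrm{I}\not\uparrow\vec{\mathsf G}_1(\mathcal B,\mathcal D)\) I would reverse the translation. Given a strategy \(\sigma\) for P1 in the target game, define a strategy \(\sigma^\dagger\) for P1 in the source game by: on a P2-history \(\langle x_1,\dots,x_{n-1}\rangle\), inductively reconstruct the interleaved target play \(B_1=\sigma\langle\rangle\), \(y_1=\overrightarrow{\mathsf T}_{\mathrm{II},1}(x_1,B_1)\), \(B_2=\sigma\langle y_1\rangle\), \(\dots\), \(B_n=\sigma\langle y_1,\dots,y_{n-1}\rangle\), and output \(\overleftarrow{\mathsf T}_{\mathrm{I},n}(B_n)\in\mathcal A\). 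This is well-defined, and in any run consistent with \(\sigma^\dagger\) one has \(x_k\in\overleftarrow{\mathsf T}_{\mathrm{I},k}(B_k)\) by construction, hence \(y_k\in B_k\) and \(\langle y_1,y_2,\dots\rangle\) is a run consistent with \(\sigma\). Taking, by hypothesis, a run against \(\sigma^\dagger\) that P2 wins, i.e.\ with \(\langle x_n\rangle\in\mathcal C\), the second bulleted hypothesis gives \(\langle y_n\rangle\in\mathcal D\), so that run defeats \(\sigma\); hence no P1 strategy in the target game is winning.

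For the \(\leq^+_{\mathrm{II}}\) strengthening, the constant clause is precisely where the extra hypothesis \(\overleftarrow{\mathsf T}_{\mathrm{I},n}=\overleftarrow{\mathsf T}_{\mathrm{I},m}\) is used: if P1's constant target move is \(B_0\), then \(\overleftarrow{\mathsf T}_{\mathrm{I},n}(B_0)\) is a single set \(A_0\in\mathcal A\) independent of \(n\), so \(\mathrm{I}\underset{\mathrm{cnst}}{\not\uparrow}\vec{\mathsf G}_1(\mathcal A,\mathcal C)\) supplies a sequence \(\langle x_n\rangle\in A_0^{\mathbb N}\) with \(\langle x_n\rangle\in\mathcal C\), whence \(\langle\overrightarrow{\mathsf T}_{\mathrm{II},n}(x_n,B_0)\rangle\) is a sequence in \(B_0\) lying in \(\mathcal D\) by the second hypothesis, witnessing \(\mathrm{I}\underset{\mathrm{cnst}}{\not\uparrow}\vec{\mathsf G}_1(\mathcal B,\mathcal D)\). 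I expect the only delicate point to be the bookkeeping in the reverse translation for the \(\mathrm{I}\not\uparrow\) clause --- verifying that the interleaved reconstruction is genuinely well-defined and that the produced P2 run is simultaneously legal and consistent with \(\sigma\); everything else is a mechanical unwinding of the two displayed properties.
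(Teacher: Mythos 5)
Your proposal is correct, and it is essentially the argument the paper intends: the paper omits the proof, referring to the standard round-by-round translation scheme of \cite[Theorem 12]{CHContinuousFunctions} and \cite[Theorem 2.16]{CHVietoris}, and your forward translation for P2 (full-information, Markov, and the predetermined clause via Remark \ref{remark:LindelofAndSelection}), reverse translation for the \(\mathrm{I}\not\uparrow\) clause, and use of the \(n\)-independence of \(\overleftarrow{\mathsf T}_{\mathrm{I},n}\) only for the constant clause all match that template. All five implications are verified correctly, so nothing further is needed.
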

The proof, which we leave to the reader, is nearly identical to the proof of \cite[Theorem 12]{CHContinuousFunctions}, where the constant strategy portion is similar to
what is provided in \cite[Theorem 2.16]{CHVietoris}.

\subsection{Hyperspaces of Finite Subsets}

We will be considering two standard ways to topologize the set \([X]^{<\aleph_0}\) of nonempty finite subsets of a given space \(X\).
Though the square brackets \([ \ ]\) are overloaded in various ways, context of use should dispel any potential confusion.

Pixley-Roy hyperspaces have received detailed attention in the literature.
For a general development of the theory of Pixley-Roy hyperspaces, see \cite{DouwenPR}.
\begin{definition}
	Let \(\mathcal F[X]\) represent \([X]^{<\aleph_0}\) endowed with the Pixley-Roy topology;
	that is, the topology which has as a basis sets of the form
	\[[F,U] = \{ G \in [X]^{<\aleph_0} : F \subseteq G \subseteq U \},\]
	where \(F \subseteq [X]^{<\aleph_0}\), and \(U \in \mathscr T_X\),
\end{definition}

For a space \(X\), the hyperspace \(\mathbb K(X)\) of nonempty compact subsets of \(X\) endowed with the Vietoris topology
has a rich history.
For a thorough treatment of the Vietoris topology, see \cite{MichaelSubsets}.
In this work, we will focus only on the subspace of \(\mathbb K(X)\) consisting of finite subsets of \(X\).
\begin{definition}
	Let \(\mathcal P_{\mathrm{fin}}(X)\) represent \([X]^{<\aleph_0}\) viewed as a subspace of \(\mathbb K(X)\).
	Note that a basis for \(\mathcal P_{\mathrm{fin}}(X)\) consists of sets of the form
	\[
		[U_\alpha : \alpha \in \Lambda]
		:= \left\{ F \in [X]^{<\aleph_0} : \left( F \subseteq \bigcup \{U_\alpha : \alpha \in \Lambda\} \right) \wedge \left[ \forall \alpha \in \Lambda \ \left(F \cap U_\alpha \neq \varnothing\right) \right] \right\},
	\]
	where \(\# \Lambda < \aleph_0\) and \(U_\alpha \in \mathscr T_X\) for each \(\alpha \in \Lambda\).
	When the finite set of open sets is represented as \(\{U_1, \ldots, U_n\}\), we will use the more traditional notation
	of \([U_1,\ldots, U_n]\) for \([U_j : 1 \leq j \leq n]\).
\end{definition}
The hyperspaces \(\mathcal P_{\mathrm{fin}}(X)\) and \(\mathcal F[X]\) are related in an obvious way.
\begin{proposition}
	The topology on \([X]^{<\aleph_0}\) corresponding to \(\mathcal P_{\mathrm{fin}}(X)\) is, in general, strictly coarser than the one
	corresponding to \(\mathcal F[X]\).
	In other words, the identity map \(\mathcal F[X] \to \mathcal P_{\mathrm{fin}}(X)\) is continuous.
\end{proposition}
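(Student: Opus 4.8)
The plan is to show that every Pixley--Roy basic open set is also Vietoris open (so the identity map $\mathcal F[X] \to \mathcal P_{\mathrm{fin}}(X)$ is continuous), and then exhibit a single example where a Vietoris-open set fails to be Pixley--Roy open (so the inclusion of topologies is in general strict). First I would take a basic Pixley--Roy open set $[F,U] = \{ G \in [X]^{<\aleph_0} : F \subseteq G \subseteq U \}$ with $F \in [X]^{<\aleph_0}$ and $U \in \mathscr T_X$, and a point $G_0 \in [F,U]$. Writing $F = \{x_1,\dots,x_k\}$, I claim $G_0 \in [U_{x_1},\dots,U_{x_k},U]$ where each $U_{x_i}$ is \emph{any} open neighborhood of $x_i$ contained in $U$ (for instance $U$ itself, but one can be more frugal); indeed $G_0 \subseteq U$ handles the ``$F \subseteq \bigcup$'' clause, and $x_i \in F \subseteq G_0$ witnesses $G_0 \cap U_{x_i} \neq \varnothing$. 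Conversely, if $H \in [U_{x_1},\dots,U_{x_k},U]$ then $H \subseteq U$, but this alone does not force $F \subseteq H$ — so the naive choice does not land back inside $[F,U]$, and the containment can be strict on a per-point basis. This is the crux of the matter: Vietoris basic sets cannot in general pin down which specific points lie in a set, only that the set meets certain opens and is swallowed by their union.

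To handle this correctly I would argue continuity directly rather than via a containment of basic sets: the identity $\mathcal F[X] \to \mathcal P_{\mathrm{fin}}(X)$ is continuous iff the preimage of every Vietoris basic set $[U_1,\dots,U_n]$ is Pixley--Roy open, which is immediate since $[U_1,\dots,U_n]$ is visibly a union of Pixley--Roy basic sets: for each $G \in [U_1,\dots,U_n]$ one has $G \in [G, \bigcup_{j} U_j] \subseteq [U_1,\dots,U_n]$, because any $H$ with $G \subseteq H \subseteq \bigcup_j U_j$ still meets each $U_j$ (as $G$ does) and sits inside the union. That settles ``coarser or equal.''

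For strictness, I would fix any space $X$ with at least two points, say distinct $x, y \in X$, and consider the Pixley--Roy basic open set $[\{x\}, X] = \{ G \in [X]^{<\aleph_0} : x \in G \}$. It contains the singleton $\{x\}$. I claim no Vietoris basic set $[U_1,\dots,U_n]$ satisfies $\{x\} \in [U_1,\dots,U_n] \subseteq [\{x\},X]$: from $\{x\} \in [U_1,\dots,U_n]$ we get $x \in U_j$ for every $j$, hence $\{x,y\} \in [U_1,\dots,U_n]$ as well (it meets each $U_j$ via $x$ and lies in $\bigcup_j U_j \subseteq X$), yet $\{x,y\}$ need not contain... wait — it does contain $x$. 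So I would instead use $[\{x\},X] \ni \{x\}$ and test against the point $\{x,y\}$ replaced by a set \emph{omitting} $x$: from $x \in U_j$ for all $j$ and $y \in X$ we cannot conclude $y \in$ any $U_j$, but to exit $[\{x\},X]$ I need a finite set in $[U_1,\dots,U_n]$ missing $x$, which requires $\bigcup_j U_j$ to contain a point other than $x$ that can cover all the $U_j$ — achievable precisely when, e.g., $X$ is $T_1$ with a point $y$ having a neighborhood avoiding $x$ is \emph{not} what's needed; rather I want $y \in \bigcup_j U_j$. The clean fix: take $X$ to be, say, the two-point Sierpiński-or-discrete space, or more simply any space where some point $x$ has the property that $X$ is the only open set containing $x$ (e.g. $x$ a generic point), so that $[\{x\},X] = \{G : x \in G\}$ while the Vietoris neighborhood filter of $\{x\}$ must include $[X]$ and hence the set $\{y\}$ for $y \neq x$, which is not in $[\{x\},X]$. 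The main obstacle is thus pinning down a minimal concrete $X$ making the strictness transparent; I would cite or reproduce the standard observation (see \cite{DouwenPR}) that for $T_1$ spaces $X$ with more than one point the identity is never a homeomorphism, taking $X = \mathbb R$ and the set $[\{0\},\mathbb R]$ as the explicit witness, since every Vietoris neighborhood of $\{0\}$ contains sets not containing $0$.
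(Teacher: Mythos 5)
The paper states this proposition without proof, treating it as routine, so there is no argument of the author's to compare against; judged on its own merits, your final argument is correct, but the writeup must be pruned of its false starts before it is usable. The two pieces that actually constitute the proof are these. For continuity of the identity \(\mathcal F[X] \to \mathcal P_{\mathrm{fin}}(X)\) you must show that every Vietoris basic set \([U_1,\dots,U_n]\) is Pixley--Roy open, and your second paragraph does exactly this, correctly: for each \(G \in [U_1,\dots,U_n]\) one has \(G \in [G, \bigcup_{j} U_j] \subseteq [U_1,\dots,U_n]\), since any \(H\) with \(G \subseteq H \subseteq \bigcup_j U_j\) still meets each \(U_j\) because \(G\) does. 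Be aware that your opening sentence has the direction backwards --- showing that Pixley--Roy basic sets are Vietoris open would establish continuity of the identity in the \emph{other} direction; you notice this mid-paragraph, but a final version should simply begin from the correct direction. For strictness, your eventual witness \(X = \mathbb R\) with the Pixley--Roy basic set \([\{0\},\mathbb R]\) works, and the one-line justification you should actually write down is: if \(\{0\} \in [U_1,\dots,U_n]\), then \(0 \in \bigcap_{j} U_j\), and since \(0\) is not isolated in \(\mathbb R\) there is \(y \in \bigcap_j U_j\) with \(y \neq 0\), whence \(\{y\} \in [U_1,\dots,U_n]\) but \(\{y\} \notin [\{0\},\mathbb R]\). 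The general fact you are circling is that \([\{x\},X]\) fails to be Vietoris open precisely when \(x\) is not isolated, which is why the statement says ``in general'' rather than ``always.'' The visible dead ends (``wait --- it does contain \(x\)'', the two-point-space digression) are not part of a proof and must be deleted.
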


One potential benefit to studying \(\mathcal P_{\mathrm{fin}}(X)\) is that, as is well known, it contains a homeomorphic copy of \(X\).
\begin{proposition}
	The map \(x \mapsto \{x\}\), \(X \to \mathcal P_{\mathrm{fin}}(X)\), is a homeomorphism onto its range.
\end{proposition}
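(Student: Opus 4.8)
The plan is to show that the map $e : X \to \mathcal P_{\mathrm{fin}}(X)$ given by $e(x) = \{x\}$ is a homeomorphism onto its image $Y := \{\{x\} : x \in X\} \subseteq \mathcal P_{\mathrm{fin}}(X)$. Since $e$ is visibly a bijection onto $Y$, it suffices to check that $e$ is continuous and open onto $Y$; equivalently, that for a subbasic neighborhood in $\mathcal P_{\mathrm{fin}}(X)$, its preimage under $e$ is open in $X$, and that the image under $e$ of a basic open set of $X$ is open in $Y$.

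First I would recall that a basic open set of $\mathcal P_{\mathrm{fin}}(X)$ has the form $[U_1,\dots,U_n]$ with each $U_j \in \mathscr T_X$, and I would compute its preimage: $e^{-1}([U_1,\dots,U_n]) = \{ x \in X : \{x\} \subseteq \bigcup_{j} U_j \text{ and } \{x\} \cap U_j \neq \varnothing \text{ for all } j\} = \bigcap_{j=1}^n U_j$, which is open in $X$. (If the $U_j$'s have empty intersection the preimage is empty, which is still open.) This gives continuity of $e$. Next, for openness, I would take a basic open $U \in \mathscr T_X$ and observe that $e(U) = \{\{x\} : x \in U\} = [U] \cap Y$, where $[U] = \{F \in [X]^{<\aleph_0} : F \subseteq U, F \cap U \neq \varnothing\}$ is basic open in $\mathcal P_{\mathrm{fin}}(X)$; indeed a singleton $\{x\}$ lies in $[U]$ iff $x \in U$. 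Hence $e(U)$ is open in the subspace $Y$, so $e$ is an open map onto its range.

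The proof is essentially a routine unwinding of the definitions; I do not expect a serious obstacle. The one point deserving a moment's care is to make sure the computation of $e^{-1}([U_1,\dots,U_n])$ correctly reflects both conditions in the definition of the basic Vietoris neighborhood — the ``covering'' condition $F \subseteq \bigcup U_j$ and the ``meeting'' condition $F \cap U_j \neq \varnothing$ for each $j$ — and to note that for a singleton these jointly reduce to $x \in \bigcap_j U_j$. No separation axioms are needed, so the statement holds for every space $X$ in the sense of this paper.
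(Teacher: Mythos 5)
Your proof is correct: the computation \(e^{-1}([U_1,\dots,U_n]) = \bigcap_{j=1}^n U_j\) gives continuity, and \(e(U) = [U] \cap Y\) gives openness onto the range, which is exactly the routine verification the paper has in mind. The paper states this proposition without proof (as ``well known''), so there is nothing to compare against beyond noting that your argument is the standard one and is complete.
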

By way of contrast,
\begin{proposition}
	The image of the map \(x \mapsto \{x\}\), \(X \to \mathcal F[X]\), is closed and relatively compact.
	Hence, the map \(x \mapsto \{x\}\), \(X \to \mathcal F[X]\), is continuous if and only if \(X\) is discrete.
\end{proposition}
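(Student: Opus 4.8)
The plan is to argue directly from the definition of the Pixley-Roy basis. Write \(i : X \to \mathcal F[X]\) for the map \(i(x) = \{x\}\) and set \(S = i[X] = \{\{x\} : x \in X\}\). Recall that, by the standing convention, every point of \(\mathcal F[X]\) is a nonempty finite subset of \(X\), and in a basic open set \([F,U]\) the set \(F\) is a nonempty finite subset of \(X\) while \(U \in \mathscr T_X\) is a nonempty open set. The entire argument reduces to one observation: for \(x \in X\), we have \(\{x\} \in [F,U]\) exactly when \(F \subseteq \{x\} \subseteq U\), and since \(F \neq \varnothing\) this says precisely that \(F = \{x\}\) and \(x \in U\). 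In particular, the basic neighborhoods of the point \(\{x\}\) in \(\mathcal F[X]\) are exactly the sets \([\{x\},U]\) with \(x \in U\).

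From this observation I would first read off that \(S\) is relatively discrete: for each \(x \in X\), \([\{x\},X] \cap S = \{\{y\} : \{x\} \subseteq \{y\}\} = \{\{x\}\}\), so \(\{x\}\) is isolated in \(S\). I would then show \(S\) is closed by showing its complement is open. That complement is exactly \(\{G \in \mathcal F[X] : \#G \geq 2\}\), since every point of \(\mathcal F[X]\) is nonempty; and for any such \(G\) the basic neighborhood \([G,X]\) consists entirely of supersets of \(G\), hence of sets of cardinality at least \(2\), so \([G,X] \cap S = \varnothing\). Thus the complement of \(S\) is a union of basic open sets, and \(S\) is closed. For the stated equivalence, the backward direction is immediate: if \(X\) is discrete then \(i\) is a map out of a discrete space, hence continuous. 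For the forward direction I would use relative discreteness of \(S\): assuming \(i\) continuous, fix \(x \in X\) and pick \(V\) open in \(\mathcal F[X]\) with \(V \cap S = \{\{x\}\}\); then \(i^{-1}(V) = \{y \in X : \{y\} \in V\} = \{x\}\) is open in \(X\), and since \(x\) was arbitrary, \(X\) is discrete.

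I do not expect a genuine obstacle here; the whole proof is the single equivalence \(\{x\} \in [F,U] \iff (F = \{x\} \wedge x \in U)\) unwound in three directions. The only point that needs care is the bookkeeping around \(\varnothing\): it is precisely the exclusion of \(\varnothing\) from \(\mathcal F[X]\) that forces \(S\) to be closed, since if \(\varnothing\) were admitted as a point then every basic neighborhood \([\varnothing,U]\) of it would meet \(S\), putting \(\varnothing\) in \(\overline{S} \setminus S\); relative discreteness of \(S\), and hence the final equivalence, would hold regardless.
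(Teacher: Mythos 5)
Your argument is correct, and the paper itself offers no proof of this proposition, so there is nothing to compare against; the single equivalence \(\{x\} \in [F,U] \iff (F = \{x\} \wedge x \in U)\) does indeed carry all three claims (relative discreteness via \([\{x\},X]\), closedness via \([G,X]\) for \(\#G \geq 2\), and the forward direction of the continuity equivalence via \(i^{-1}([\{x\},X]) = \{x\}\)). One small point worth making explicit: the statement as printed says ``relatively compact,'' which is evidently a typo for ``relatively discrete'' --- the image is homeomorphic to \(X\) with the discrete topology, so it is compact only when \(X\) is finite, and the paper's later remark that \(\{\{x\} : x \in \mathbb R\}\) is ``closed and relatively discrete'' in \(\mathcal F[\mathbb R]\) confirms the intended reading; you proved the right statement. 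Your closing observation about the role of excluding \(\varnothing\) is also accurate.
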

Another potential benefit to studying \(\mathcal P_{\mathrm{fin}}(X)\) is that it can capture some selective properties that are witnessed
in all finite powers of \(X\); for example, see particular results from \cite{CHVietoris,C_HurewiczVietoris}.

\subsection{Chasing Subsequences} \label{sec:Subsequences}

Throughout this note, we will be interested in subsequences of a given sequence of a space \(X\).
\begin{definition}
	Let \[\mathbb S = \left\{ \phi \in \mathbb N^{\mathbb N} : \forall m, n \in \mathbb N\ \left( m<n \implies \phi(m) < \phi(n) \right) \right\},\]
	the set of strictly increasing functions \(\mathbb N \to \mathbb N\).
\end{definition}
Note here that, for a space \(X\) and a sequence \(s \in X^{\mathbb N}\), any subsequence of \(s\) can be captured
by \(s \circ \phi\) for suitable \(\phi \in \mathbb S\).

We note the following fact which can be proved with a routine argument.
\begin{proposition}
	\(\mathbb S\) is an uncountable Polish semigroup with the property that, for any space \(X\),
	\(\mathbb S\) acts continuously on \(X^{\mathbb N}\) by function composition; in fact,
	the map \(\langle \phi, s \rangle \mapsto s \circ \phi\), \(\mathbb N^{\mathbb N} \times X^{\mathbb N} \to X^{\mathbb N}\),
	is continuous.
\end{proposition}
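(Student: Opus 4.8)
The plan is to verify each asserted property of $\mathbb{S}$ in turn: that it is a semigroup under composition, that it is Polish (separable and completely metrizable), that it is uncountable, and that the composition action on $X^{\mathbb{N}}$ is continuous for every space $X$.

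First I would observe that $\mathbb{S}$ is closed under composition: if $\phi, \psi \in \mathbb{S}$ and $m < n$, then $\psi(m) < \psi(n)$, hence $\phi(\psi(m)) < \phi(\psi(n))$, so $\phi \circ \psi \in \mathbb{S}$. Associativity is inherited from composition of functions. (There is no identity requirement for a semigroup, though in fact $\mathrm{id}_{\mathbb{N}} \in \mathbb{S}$, so it is even a monoid.) For the Polish claim, I would note that $\mathbb{N}^{\mathbb{N}}$ with the product topology (the Baire space) is Polish, and $\mathbb{S}$ is a closed subset of $\mathbb{N}^{\mathbb{N}}$: indeed,
\[
\mathbb{S} = \bigcap_{m < n} \{ \phi \in \mathbb{N}^{\mathbb{N}} : \phi(m) < \phi(n) \},
\]
and each set $\{\phi : \phi(m) < \phi(n)\}$ is clopen since it depends only on the coordinates $m$ and $n$. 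A closed subspace of a Polish space is Polish. Uncountability follows, for instance, by exhibiting a continuous injection of $2^{\mathbb{N}}$ into $\mathbb{S}$ (e.g.\ map a binary sequence to a strictly increasing sequence whose consecutive gaps are $1$ or $2$ according to the bits), or simply by noting that for each $A \in [\mathbb{N}]^{\aleph_0}$ the increasing enumeration of $A$ lies in $\mathbb{S}$, and there are $2^{\aleph_0}$ such $A$.

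For the continuity of the action, it suffices to show the single map $\langle \phi, s \rangle \mapsto s \circ \phi$ from $\mathbb{N}^{\mathbb{N}} \times X^{\mathbb{N}}$ to $X^{\mathbb{N}}$ is continuous, since restricting to $\mathbb{S} \times X^{\mathbb{N}}$ preserves continuity and, for fixed $\phi$, the section $s \mapsto s\circ\phi$ is then continuous, which is what "acts continuously" amounts to. To check continuity into the product $X^{\mathbb{N}}$, it is enough to check that each coordinate map $\langle \phi, s \rangle \mapsto (s \circ \phi)(n) = s(\phi(n))$ is continuous. Fix $n$ and a point $\langle \phi_0, s_0 \rangle$, and let $V \in \mathscr{T}_X$ contain $s_0(\phi_0(n))$. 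Let $k = \phi_0(n)$. The basic open neighborhood $\{\phi : \phi(n) = k\} \times \{s : s(k) \in V\}$ of $\langle \phi_0, s_0 \rangle$ maps into $\{t \in X^{\mathbb{N}} : t(n) \in V\}$, since for any $\langle \phi, s\rangle$ in it we have $(s\circ\phi)(n) = s(\phi(n)) = s(k) \in V$. This establishes continuity.

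The proof is entirely routine; there is no real obstacle. The only point requiring a moment's care is phrasing the continuity of the action correctly — one wants the joint map on $\mathbb{N}^{\mathbb{N}} \times X^{\mathbb{N}}$ to be continuous (from which the continuity of each section follows), rather than merely separate continuity — and noting that the argument works for an arbitrary topological space $X$ because the product topology on $X^{\mathbb{N}}$ is generated by the coordinate projections and the relevant neighborhoods pull back coordinatewise.
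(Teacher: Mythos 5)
Your proof is correct and is exactly the routine argument the paper has in mind (the paper states this proposition without proof, remarking only that it ``can be proved with a routine argument''). All the key points are present: closure under composition, realizing \(\mathbb S\) as a closed subspace of the Baire space \(\mathbb N^{\mathbb N}\), uncountability via increasing enumerations of infinite subsets, and coordinatewise verification of joint continuity of \(\langle \phi, s\rangle \mapsto s \circ \phi\).
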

\begin{definition}
	We now let
	\[\mathrm{SDS}_X = \{ s \in X^{\mathbb N} : \forall \phi \in \mathbb S\ (s \circ \phi \not\in \mathrm{CS}_X) \},\]
	the set of all \emph{strongly divergent sequences of \(X\)}.
\end{definition}
Note that \(s \in \mathrm{SDS}_X\) if and only if every subsequence of \(s\) is divergent.
This notation also allows us to represent the property of being SHD in terms of selection principles.
\begin{remark}
	A space \(X\) is SHD if and only if \(X \models \vec{\mathsf S}_1(\mathscr T, \mathrm{SDS})\).
\end{remark}

Recall that \(S_\infty\), the permutation group of \(\mathbb N\), is an uncountable Polish group under the topology of pointwise convergence.
The group \(S_\infty\), like \(\mathbb S\), also acts naturally on \(X^{\mathbb N}\), where the action is \(\langle \phi, s \rangle \mapsto s \circ \phi\),
\(S_\infty \times X^{\mathbb N} \to X^{\mathbb N}\).
Note then that both \(\mathrm{CS}_X\) and \(\mathrm{SDS}_X\) are invariant under the actions of both \(\mathbb S\) and \(S_\infty\).

\begin{definition}
	We define
	\[\mathrm{wSD}_X = \{ s \in X^{\mathbb N} : \exists \phi \in \mathbb S\ (s \circ \phi \in \mathrm{SDS}_X) \},\]
	the set of \emph{weakly strongly divergent} sequences of \(X\).
\end{definition}
Note that
\[\mathrm{SDS}_X \subseteq \mathrm{wSD}_X,\]
and, in general, the inclusion is proper.

As above, we can express the notion of being WSHD in terms of selection principles.
\begin{remark}
	A space \(X\) is WSHD if and only if \(X \models \vec{\mathsf S}_1(\mathscr T, \mathrm{wSD})\).
\end{remark}

Since \(\mathbb S\) is an uncountable Polish space, and hence a Baire space, we can incorporate the notion of category.
\begin{definition}
	Define the set of
	\emph{generically strongly divergent sequences of \(X\)} to be
	\[\mathrm{gSDS}_X = \{ s \in X^{\mathbb N} : \forall^\ast \phi \in \mathbb S\ (s \circ \phi \in \mathrm{SDS}_X) \}\]
	where the expression \((\forall^\ast \phi \in \mathbb S)\ P(\phi)\) means that \(\{ \phi \in \mathbb S : P(\phi) \}\) is a comeager
	subset of \(\mathbb S\).
\end{definition}
Note then that
\[\mathrm{SDS}_X \subseteq \mathrm{gSDS}_X \subseteq \mathrm{wSD}_X.\]
\begin{definition}
	We can alternatively define the set of \emph{generically divergent sequences of \(X\)} to be
	\[\mathrm{gDS}_X = \{ s \in X^{\mathbb N} : \forall^\ast \phi \in \mathbb S\ (s \circ \phi \not\in \mathrm{CS}_X) \}.\]
\end{definition}
Note that \[\mathrm{gSDS}_X \subseteq \mathrm{gDS}_X.\]

\begin{question}
	Is it possible that \[\mathrm{SDS} = \mathrm{gSDS} = \mathrm{gDS}?\]
\end{question}

We note here that a similar definition could be made relative to one's favorite Borel measure on \(\mathbb S\).
In fact, such a notion can be defined relative to any nontrivial filter on \(\mathbb S\).
However, we will not entertain this direction explicitly here.
Instead, we turn our attention to notions of asymptotic density since they can capture a notion of \emph{most} naturals
and generate natural subsemigroups of \(\mathbb S\), which will become relevant, for example, in Lemma \ref{lem:PushingSDS}.

\subsection{Asymptotic Density and Statistical Convergence} \label{sec:Asymptotic}

Asymptotic density, also referred to as natural density, is a common measure used mainly in number theory to gauge how \emph{large} a given
subset of \(\mathbb N\) is.
For a thorough treatment of asymptotic density, see \cite{Niven}.
\begin{definition} \label{def:AsymptoticDensity}
	For \(A \subseteq \mathbb N\),
	define the \emph{asymptotic density} of \(A\) to be
	\[\delta(A) = \lim_{n \to \infty} \frac{\#(A \cap [1,n])}{n},\]
	provided the limit exists.
	
	For each \(\phi \in \mathbb S\), define
	\[\delta(\phi) = \delta(\{\phi(k) : k \in \mathbb N \}),\]
	whenever the latter is defined.
\end{definition}
\begin{remark} \label{rmk:BasicAsymptoticThing}
	As noted in \cite{Niven}, 
	\[\delta(\phi) = \lim_{n\to\infty} \frac{n}{\phi(n)},\]
	whenever \(\delta(\phi)\) is defined.
\end{remark}
\begin{definition}
For each \(\alpha \in [0,1]\), let \[\mathbf D_\alpha = \delta^{-1}(\alpha) = \{ \phi \in \mathbb S : \delta(\phi) = \alpha \}.\]
We will refer to elements of \(\mathbf D_1\) as being \emph{\(\delta_1\)-dense}.
\end{definition}

Di Maio and Ko{\v{c}}inac introduced in \cite{DiMaioKocinacStatistical} two natural notions of convergence in general topological spaces
based on asymptotic density.
\begin{definition}[{\cite{DiMaioKocinacStatistical}}]
	Let \(X\) be a space.
	Then a sequence \(s \in X^{\mathbb N}\) is said to \emph{statistically converge to \(x \in X\)} if,
	for every \(U \in \mathcal  N_{X,x}\),
	\(\delta\left( \{n \in \mathbb N : s_n \not\in U \} \right) = 0\);
	equivalently, \(s\) statistically converges to \(x\) if
	\[(\forall U \in \mathcal N_{X,x})(\exists \phi \in \mathbf D_1)(\forall n \in \mathbb N)\ s \circ \phi(n) \in U.\]
\end{definition}
\begin{definition}[{\cite{DiMaioKocinacStatistical}}]
	Let \(X\) be a space.
	A sequence \(s \in X^{\mathbb N}\) is said to \emph{\(s^\ast\)-converge to \(x \in X\)} if there exists \(\phi \in \mathbf D_1\) such that
	\(s \circ \phi \to x\).
\end{definition}
As recorded in \cite[Lemma 2.1]{DiMaioKocinacStatistical}, if a sequence \(s^\ast\)-converges to a point, then it also statistically converges to that point.
Also, the notions coincide for first-countable spaces, as witnessed by \cite[Theorem 2.2]{DiMaioKocinacStatistical}.
However, we are left wondering:
\begin{question}
	Are the notions of statistical convergence and \(s^\ast\)-convergence equivalent for all spaces?
\end{question}

As an immediate consequence of \cite[Theorem 6]{Niven}, we have
\begin{proposition} \label{prop:DenseSubsemigroup}
	For any \(\alpha, \beta \in [0,1]\),
	\(\mathbf D_\alpha \circ \mathbf D_\beta \subseteq \mathbf D_{\alpha\beta}\).
	In particular, \(\mathbf D_1\) is a subsemigroup of \(\mathbb S\).
\end{proposition}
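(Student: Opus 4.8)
The plan is to reduce the whole statement to the limit characterization of $\delta$ recorded in Remark \ref{rmk:BasicAsymptoticThing}, which is exactly the content underlying \cite[Theorem 6]{Niven}. Fix $\alpha,\beta \in [0,1]$, take $\phi \in \mathbf D_\alpha$ and $\psi \in \mathbf D_\beta$, and note that the desired conclusion is simply that $\delta(\phi \circ \psi) = \alpha\beta$. By Remark \ref{rmk:BasicAsymptoticThing} this is equivalent to showing
\[
\lim_{n\to\infty} \frac{n}{\phi(\psi(n))} = \alpha\beta,
\]
given that $\lim_{n\to\infty} n/\phi(n) = \alpha$ and $\lim_{n\to\infty} n/\psi(n) = \beta$.

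First I would write, for each $n \in \mathbb N$,
\[
\frac{n}{\phi(\psi(n))} = \frac{n}{\psi(n)} \cdot \frac{\psi(n)}{\phi(\psi(n))},
\]
a product of well-defined positive reals. The first factor tends to $\beta$ by hypothesis. For the second factor, since $\psi \in \mathbb S$ is strictly increasing we have $\psi(n) \to \infty$, so $\langle \psi(n) : n \in \mathbb N \rangle$ enumerates a cofinal subset of $\mathbb N$; hence $\langle \psi(n)/\phi(\psi(n)) : n \in \mathbb N \rangle$ is (a subsequence of) the convergent sequence $\langle m/\phi(m) : m \in \mathbb N \rangle$ and therefore converges to $\alpha$. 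Multiplying the two limits gives $n/\phi(\psi(n)) \to \alpha\beta$, i.e. $\phi \circ \psi \in \mathbf D_{\alpha\beta}$, which is the inclusion $\mathbf D_\alpha \circ \mathbf D_\beta \subseteq \mathbf D_{\alpha\beta}$. For the final clause, specialize to $\alpha = \beta = 1$ to get $\mathbf D_1 \circ \mathbf D_1 \subseteq \mathbf D_1$; since $\mathbf D_1 \subseteq \mathbb S$ and composition of strictly increasing self-maps of $\mathbb N$ is associative, this exhibits $\mathbf D_1$ as a subsemigroup of $\mathbb S$.

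There is no real obstacle here; the only point deserving a moment's attention is the subsequence step in the degenerate case $\alpha = 0$, where the argument is unchanged once one observes that $m/\phi(m) \to 0$, and the bookkeeping that every term appearing in the displayed quotients is a genuine positive real so that the product-of-limits rule applies without qualification.
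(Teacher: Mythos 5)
Your proof is correct. For the record, the paper offers no argument at all here — it simply cites \cite[Theorem 6]{Niven} and moves on — so your self-contained verification via the telescoping factorization \(\frac{n}{\phi(\psi(n))} = \frac{n}{\psi(n)}\cdot\frac{\psi(n)}{\phi(\psi(n))}\) is essentially a reproof of Niven's result rather than a divergence from the paper's method; it is the same telescoping trick the paper itself uses later in Lemma \ref{lemma:Bounding} and Theorem \ref{thm:Dalpha}, and it buys the reader an argument they can check without consulting the reference. The one step you should shore up: Remark \ref{rmk:BasicAsymptoticThing} as stated only asserts \(\delta(\phi) = \lim_n n/\phi(n)\) \emph{when \(\delta(\phi)\) is already known to exist}, so invoking it to pass from "\(\lim_n n/\phi(\psi(n)) = \alpha\beta\)" back to "\(\delta(\phi\circ\psi) = \alpha\beta\)" uses the converse implication, which the Remark does not literally give. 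That converse is true and easy — writing \(a_k = \phi(\psi(k))\) and \(A\) for the range of \(\phi\circ\psi\), for \(a_k \le n < a_{k+1}\) one has \(\#(A\cap[1,n])/n = k/n\), which is squeezed between \(k/a_{k+1} = \frac{k}{k+1}\cdot\frac{k+1}{a_{k+1}}\) and \(k/a_k\), both tending to the same limit — but a sentence to that effect belongs in the proof. Your handling of the degenerate cases is fine, and indeed both factors lie in \((0,1]\) since every \(\chi\in\mathbb S\) satisfies \(\chi(n)\ge n\), so the product rule needs no caveats.
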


We will also consider here a slight modification to the notion of asymptotic density, the
notion of \(\alpha\)-density due to \cite{Colak}, which arises as a natural gauge for the size of subsets of naturals
related to \(\lambda\)-statistical convergence introduced in \cite{LambdaConvergence}.
As can be seen, the \(\alpha\)-density offers a way to distinguish subsets of the naturals of zero asymptotic density.
There are a variety of other adaptations of the \(\lambda\)-statistical convergence notion, such as in \cite{ColakBektas},
but we will limit our attention to \(\alpha\)-density and related notions here.
\begin{definition}[{\cite[Def. 2.1]{Colak}}]
	For \(A \subseteq \mathbb N\) and \(\alpha \in (0,1]\), define
	the \emph{\(\alpha\)-density} of \(A \subseteq \mathbb N\) to be
	\[\delta_\alpha(A) = \lim_{n \to \infty} \frac{\#(A \cap [1,n])}{n^\alpha},\]
	provided the limit exists.
	
	For each \(\phi \in \mathbb S\), define
	\[\delta_\alpha(\phi) = \delta_\alpha(\{\phi(k) : k \in \mathbb N \}),\]
	whenever the latter is defined.
\end{definition}
Note that \(\delta_1\) agrees with the standard notion of asymptotic density.
\begin{remark}
	Given \(\phi \in \mathbb S\) such that \(\delta_\alpha(\phi)\) exists, note that
	\begin{align*}
		\delta_\alpha(\phi)
		&= \lim_{n \to \infty} \frac{\#(\{\phi(k) : k \in \mathbb N\} \cap [1,n])}{n^\alpha}\\
		&= \lim_{n \to \infty} \frac{\#(\{\phi(k) : k \in \mathbb N\} \cap [1,\phi(n)])}{\phi(n)^\alpha}\\
		&= \lim_{n \to \infty} \frac{n}{\phi(n)^\alpha}.
	\end{align*}
\end{remark}
We introduce the following as an analog to \(\mathbf D_1\).
\begin{definition}
	For \(\phi \in \mathbb S\), we will let \(C_\phi = \mathbb N \setminus \{ \phi(n) : n \in \mathbb N \}\).
	Then, for \(\alpha \in (0,1]\), we will define
	\[\mathbf D_\alpha^\ast = \{ \phi \in \mathbb S : \delta_\alpha ( C_\phi ) = 0 \}.\]
	We will refer to elements of \(\mathbf D_\alpha^\ast\) as being \emph{\(\delta_\alpha\)-dense}.
\end{definition}
Observe that \(\mathbf D_1 = \mathbf D_1^\ast\), and so the particular property of being \(\delta_1\)-dense
is unambiguously defined.
\begin{remark}
	Note that
	\begin{align*}
		\#(C_\phi \cap [1,\phi(n)])
		&= \#([1,\phi(n)] \setminus \{ \phi(k) : k \in \mathbb N \})\\
		&= \phi(n) - n.
	\end{align*}
	Hence, whenever \(\delta_\alpha(C_\phi)\) exists,
	\begin{align*}
		\delta_\alpha(C_\phi) &= \lim_{n\to\infty} \frac{\#(C_\phi \cap [1,n])}{n^\alpha}\\
		&= \lim_{n\to\infty} \frac{\#(C_\phi \cap [1,\phi(n)])}{\phi(n)^\alpha}\\
		&= \lim_{n\to\infty} \frac{\phi(n) - n}{\phi(n)^\alpha}.
	\end{align*}
\end{remark}

We offer here a bit of a refinement of \cite[Lemma 2.2]{Colak}, which states that \(\delta_\beta(A) \leq \delta_\alpha(A)\)
whenever \(0 < \alpha < \beta \leq 1\).
\begin{lemma} \label{lemma:InitialBounding}
	Suppose \(0 < \alpha < \beta \leq 1\) and \(\phi \in \mathbb S\).
	If \(\left\{\dfrac{n}{\phi(n)^\alpha} : n \in \mathbb N \right\}\) is bounded,
	then \(\delta_\beta(\phi) = 0\).
	Consequently, if \(\delta_\alpha(\phi)\) is defined and finite, then \(\delta_\beta(\phi) = 0\).
\end{lemma}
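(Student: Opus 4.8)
The plan is to work directly with the quotient form of $\alpha$-density recorded in the Remark preceding the statement, namely that when $\delta_\gamma(\phi)$ exists it equals $\lim_{n\to\infty} n/\phi(n)^\gamma$. So it suffices to show that the sequence $\langle n/\phi(n)^\beta : n \in \mathbb N\rangle$ converges to $0$; once that limit is shown to exist and be $0$, the definition of $\delta_\beta(\phi)$ gives $\delta_\beta(\phi)=0$ without any separate existence hypothesis.

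The key algebraic step is the factorization
\[
\frac{n}{\phi(n)^\beta} = \frac{n}{\phi(n)^\alpha}\cdot\frac{1}{\phi(n)^{\beta-\alpha}}.
\]
Assuming $\{n/\phi(n)^\alpha : n \in \mathbb N\}$ is bounded, say by $M$, we get $0 \le n/\phi(n)^\beta \le M/\phi(n)^{\beta-\alpha}$. Since $\phi \in \mathbb S$ is strictly increasing we have $\phi(n) \ge n$ for all $n$, hence $\phi(n) \to \infty$; and because $\beta - \alpha > 0$, the exponent is positive, so $\phi(n)^{\beta-\alpha} \to \infty$ and $M/\phi(n)^{\beta-\alpha} \to 0$. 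A squeeze then forces $n/\phi(n)^\beta \to 0$, establishing the first assertion.

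For the ``consequently'' clause, I would simply observe that if $\delta_\alpha(\phi)$ is defined and finite, then (again by the Remark) $\langle n/\phi(n)^\alpha : n \in \mathbb N\rangle$ is a convergent sequence of real numbers, hence bounded, and the first part applies verbatim. I do not expect any genuine obstacle here; the only point requiring a word of care is making explicit that the factorization shows the defining limit for $\delta_\beta(\phi)$ actually exists (so that the conclusion $\delta_\beta(\phi)=0$ is meaningful), and recording the elementary fact $\phi(n)\ge n$ for $\phi\in\mathbb S$ that drives $\phi(n)^{\beta-\alpha}\to\infty$.
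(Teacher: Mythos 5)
Your factorization and the squeeze via $n/\phi(n)^\beta \le M/\phi(n)^{\beta-\alpha}$ are exactly the paper's first step, and the ``consequently'' clause is handled the same way (a convergent real sequence is bounded). The one place your write-up falls short is the step you yourself flag and then dismiss: passing from $\lim_{n\to\infty} n/\phi(n)^\beta = 0$ to $\delta_\beta(\phi)=0$. The Remark you invoke only runs in the direction ``if $\delta_\beta(\phi)$ exists, then it equals $\lim_n n/\phi(n)^\beta$''; it does not say that existence of the latter limit forces existence of the former. Moreover, the factorization by itself only controls the counting quotient $\#(\operatorname{range}(\phi)\cap[1,m])/m^\beta$ along the subsequence $m=\phi(n)$, so it does not, on its own, ``show the defining limit actually exists.''

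The missing observation is elementary but needs to be said: for $\phi(n)\le m<\phi(n+1)$ one has $\#(\operatorname{range}(\phi)\cap[1,m])=n$, hence
\[
\frac{\#(\operatorname{range}(\phi)\cap[1,m])}{m^\beta}=\frac{n}{m^\beta}\le\frac{n}{\phi(n)^\beta},
\]
and since $n\to\infty$ as $m\to\infty$, the full sequence is squeezed to $0$. The paper closes the argument with precisely this point, phrased as $\limsup_{m}\#(\operatorname{range}(\phi)\cap[1,m])/m^\beta=\limsup_n n/\phi(n)^\beta=0$. With that line inserted your proof is complete and coincides with the paper's.
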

\begin{proof}
	Let \(M > 0\) be such that \(\dfrac{n}{\phi(n)^\alpha} \leq M\) for all \(n \in \mathbb N\).
	Note then that
	\[\frac{n}{\phi(n)^\beta} = \frac{n}{\phi(n)^\alpha} \cdot \frac{1}{\phi(n)^{\beta - \alpha}} \leq \frac{M}{\phi(n)^{\beta - \alpha}}.\]
	It follows that
	\[\lim_{n\to\infty} \frac{n}{\phi(n)^\beta} = 0.\]
	
	To finish the proof, note that
	\[
		0 \leq \liminf_{n\to\infty} \frac{\#(\{\phi(k) : k \in \mathbb N\} \cap [1,n])}{n^\beta}
		\leq \limsup_{n\to\infty} \frac{\#(\{\phi(k) : k \in \mathbb N\} \cap [1,n])}{n^\beta}
		= \limsup_{n\to\infty} \frac{n}{\phi(n)^\beta} = 0.
	\] \qedhere
\end{proof}
\begin{corollary}
	For \(0 < \alpha < \beta \leq 1\), \(\mathbf D_\alpha^\ast \subseteq \mathbf D_\beta^\ast \subseteq \mathbf D_1\).
\end{corollary}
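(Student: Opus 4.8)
The plan is to reduce both inclusions to a single monotonicity statement about complements and then appeal to Lemma \ref{lemma:InitialBounding}. Recall that $\mathbf{D}_1^\ast = \mathbf{D}_1$, so it suffices to show the following: if $0 < \gamma < \delta \leq 1$ and $\phi \in \mathbb{S}$ satisfies $\delta_\gamma(C_\phi) = 0$, then $\delta_\delta(C_\phi) = 0$. Granting this, taking $\gamma = \alpha$ and $\delta = \beta$ yields $\mathbf{D}_\alpha^\ast \subseteq \mathbf{D}_\beta^\ast$, and taking $\gamma = \beta$ and $\delta = 1$ yields $\mathbf{D}_\beta^\ast \subseteq \mathbf{D}_1^\ast = \mathbf{D}_1$.

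To prove the claimed statement, I would split on the cardinality of $C_\phi$. If $C_\phi$ is finite (including empty), then $\#(C_\phi \cap [1,n])$ is eventually constant while $n^\delta \to \infty$, so $\delta_\delta(C_\phi) = 0$ straight from the definition. If $C_\phi$ is infinite, let $\psi \in \mathbb{S}$ be the strictly increasing enumeration of $C_\phi$, so that $\{\psi(k) : k \in \mathbb{N}\} = C_\phi$ and hence $\delta_\gamma(\psi) = \delta_\gamma(C_\phi) = 0$ is defined and finite. Applying Lemma \ref{lemma:InitialBounding} with $\alpha$ and $\beta$ there replaced by $\gamma$ and $\delta$ then gives $\delta_\delta(\psi) = 0$, which is exactly $\delta_\delta(C_\phi) = 0$.

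The only delicate point is that Lemma \ref{lemma:InitialBounding} is phrased for elements of $\mathbb{S}$, whereas the object of interest here is the complement set $C_\phi \subseteq \mathbb{N}$; this is precisely why the argument must be routed through the enumeration $\psi$ of $C_\phi$, and why the case in which $C_\phi$ fails to be (the range of) an element of $\mathbb{S}$ — namely $C_\phi$ finite — has to be peeled off and treated directly from the definition. Beyond this bit of bookkeeping there is no real obstacle: once the reduction is set up, the conclusion is immediate from the lemma.
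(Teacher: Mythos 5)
Your argument is correct and is essentially the intended one: the corollary is stated as an immediate consequence of Lemma \ref{lemma:InitialBounding}, and routing the complement \(C_\phi\) through its increasing enumeration (with the finite case peeled off) together with the identity \(\mathbf D_1^\ast = \mathbf D_1\) is exactly the bookkeeping needed to apply that lemma. The only cosmetic remark is that for \(\beta = 1\) the second inclusion is already the stated equality \(\mathbf D_1^\ast = \mathbf D_1\), so your reduction (which requires \(\gamma < \delta\)) is only needed when \(\beta < 1\).
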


\begin{lemma} \label{lemma:Bounding}
	Suppose \(\phi, \psi \in \mathbf D_\alpha^\ast\) for \(\alpha \in (0,1]\).
	Then \(\left\{ \dfrac{\phi(\psi(n+1))}{\phi(\psi(n))} : n \in \mathbb N \right\}\) is bounded.
\end{lemma}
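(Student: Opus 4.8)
The plan is to exploit the fact that membership in \(\mathbf D_\alpha^\ast\) forces near-linear growth. By the corollary following Lemma~\ref{lemma:InitialBounding} (together with the observation that \(\mathbf D_1 = \mathbf D_1^\ast\), which covers the case \(\alpha = 1\)), we have \(\phi, \psi \in \mathbf D_1\), so \(\delta(\phi) = \delta(\psi) = 1\). By Remark~\ref{rmk:BasicAsymptoticThing}, this means \(\lim_{n\to\infty} \frac{n}{\phi(n)} = 1\) and \(\lim_{n\to\infty} \frac{n}{\psi(n)} = 1\). If one prefers to avoid the corollary, the same conclusion follows directly from \(\delta_\alpha(C_\phi) = \lim_{n\to\infty} \frac{\phi(n) - n}{\phi(n)^\alpha} = 0\): since \(\phi(n) \geq n \geq 1\) and \(\alpha \leq 1\) give \(\phi(n)^\alpha \leq \phi(n)\), we get \(0 \leq \frac{\phi(n)-n}{\phi(n)} \leq \frac{\phi(n)-n}{\phi(n)^\alpha} \to 0\), and likewise for \(\psi\).

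Next I would compose these two asymptotics. Since \(\psi \in \mathbb S\) is strictly increasing, \(\psi(n) \to \infty\), so substituting \(m = \psi(n)\) into the limit \(\frac{m}{\phi(m)} \to 1\) yields \(\lim_{n\to\infty} \frac{\psi(n)}{\phi(\psi(n))} = 1\). Multiplying by \(\frac{n}{\psi(n)} \to 1\) then gives \(\lim_{n\to\infty} \frac{n}{\phi(\psi(n))} = 1\); equivalently, \(\phi(\psi(n)) \sim n\).

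Finally, I would write, for each \(n \in \mathbb N\),
\[
	\frac{\phi(\psi(n+1))}{\phi(\psi(n))} = \frac{\phi(\psi(n+1))}{n+1} \cdot \frac{n+1}{n} \cdot \frac{n}{\phi(\psi(n))},
\]
and observe that the three factors on the right converge to \(1\), \(1\), and \(1\), respectively. Hence \(\lim_{n\to\infty} \frac{\phi(\psi(n+1))}{\phi(\psi(n))} = 1\), so in particular the set \(\left\{ \frac{\phi(\psi(n+1))}{\phi(\psi(n))} : n \in \mathbb N \right\}\) is bounded.

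There is no real obstacle here; the only point meriting a moment's care is the legitimacy of substituting \(m = \psi(n)\) into the limit \(\frac{m}{\phi(m)} \to 1\), which is valid precisely because \(\psi(n) \to \infty\). It is also worth flagging that this lemma is exactly the growth control needed to make the composition estimate in Theorem~\ref{thm:Dalpha} go through.
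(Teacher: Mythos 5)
Your proof is correct and follows essentially the same route as the paper: both reduce to \(\delta(\phi) = \delta(\psi) = 1\) via Lemma \ref{lemma:InitialBounding} and then express \(\frac{\phi(\psi(n+1))}{\phi(\psi(n))}\) as a product of ratios each tending to \(1\), using \(\psi(n) \to \infty\) to justify the substitution. Your factorization is in fact exactly balanced (the paper's displayed product silently drops a harmless factor of \(\frac{n+1}{n}\)), so nothing further is needed.
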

\begin{proof}
	Since \(\phi, \psi \in \mathbf D_\alpha^\ast\), \(\delta_\alpha(C_\phi) = \delta_\alpha(C_\psi) = 0\).
	Hence, by Lemma \ref{lemma:InitialBounding}, \(\delta(C_\phi) = \delta(C_\psi) = 0\).
	It follows that \(\delta(\phi) = \delta(\psi) = 1\).
	To complete the proof, observe that
	\[
		\lim_{n\to\infty} \frac{\phi(\psi(n+1))}{\phi(\psi(n))}
		= \lim_{n\to\infty} \frac{\phi(\psi(n+1))}{\psi(n+1)} \cdot \frac{\psi(n)}{\phi(\psi(n))} \cdot \frac{\psi(n+1)}{n+1} \cdot \frac{n}{\psi(n)}
		= 1.
	\]
	Hence, \(\left\{ \dfrac{\phi(\psi(n+1))}{\phi(\psi(n))} : n \in \mathbb N \right\}\) is bounded.
\end{proof}

We now prove a partial analogue to Proposition \ref{prop:DenseSubsemigroup}.
\begin{theorem} \label{thm:Dalpha}
	For any \(\alpha \in (0,1]\), \(\mathbf D_\alpha^\ast\) is a subsemigroup of \(\mathbb S\).
\end{theorem}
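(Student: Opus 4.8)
The plan is to fix $\phi,\psi\in\mathbf D_\alpha^\ast$, set $\chi=\phi\circ\psi$, and show that $\delta_\alpha(C_\chi)=0$, where $C_\chi=\mathbb N\setminus\{\chi(n):n\in\mathbb N\}$. The starting point is the elementary count recorded above: for an arbitrary member of $\mathbb S$ one has $\#(C_\chi\cap[1,\chi(n)])=\chi(n)-n$, and, by the same reasoning, $\#(C_\chi\cap[1,m])=m-n$ whenever $\chi(n)\le m<\chi(n+1)$. Thus the whole task reduces to controlling $\dfrac{m-n}{m^\alpha}$ across the blocks $[\chi(n),\chi(n+1))$.

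I would first evaluate the quotient along the subsequence $m=\chi(n)$. Writing $\chi(n)-n=\bigl(\phi(\psi(n))-\psi(n)\bigr)+\bigl(\psi(n)-n\bigr)$, I split
\[
	\frac{\chi(n)-n}{\chi(n)^\alpha}=\frac{\phi(\psi(n))-\psi(n)}{\phi(\psi(n))^\alpha}+\frac{\psi(n)-n}{\phi(\psi(n))^\alpha}.
\]
Since $\phi\in\mathbf D_\alpha^\ast$, i.e.\ $\delta_\alpha(C_\phi)=0$, passing to the subsequence $(\phi(k))_k$ and using $\#(C_\phi\cap[1,\phi(k)])=\phi(k)-k$ gives $\lim_{k\to\infty}\frac{\phi(k)-k}{\phi(k)^\alpha}=0$; as $\psi(n)\to\infty$, the first summand tends to $0$. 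For the second, $\phi(j)\ge j$ and $\alpha>0$ yield $0\le\frac{\psi(n)-n}{\phi(\psi(n))^\alpha}\le\frac{\psi(n)-n}{\psi(n)^\alpha}$, which tends to $0$ because $\psi\in\mathbf D_\alpha^\ast$. Hence $\frac{\chi(n)-n}{\chi(n)^\alpha}\to0$.

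The remaining step, which I expect to be the main obstacle, is to upgrade this subsequential limit to the genuine limit defining $\delta_\alpha(C_\chi)$, since $m\mapsto\frac{\#(C_\chi\cap[1,m])}{m^\alpha}$ need not be monotone. For $\chi(n)\le m<\chi(n+1)$ I would estimate
\begin{align*}
	0\le\frac{\#(C_\chi\cap[1,m])}{m^\alpha}=\frac{m-n}{m^\alpha}
	&\le\frac{\chi(n+1)-n}{\chi(n)^\alpha}\\
	&=\frac{\chi(n+1)-(n+1)}{\chi(n+1)^\alpha}\left(\frac{\chi(n+1)}{\chi(n)}\right)^{\!\alpha}+\frac{1}{\chi(n)^\alpha}.
\end{align*}
The last summand vanishes; the first factor of the product tends to $0$ by the previous paragraph; and $\bigl(\tfrac{\chi(n+1)}{\chi(n)}\bigr)^\alpha=\bigl(\tfrac{\phi(\psi(n+1))}{\phi(\psi(n))}\bigr)^\alpha$ is bounded by Lemma \ref{lemma:Bounding} together with the fact that $t\mapsto t^\alpha$ is nondecreasing. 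Since $n\to\infty$ as $m\to\infty$, the right-hand side tends to $0$, whence $\delta_\alpha(C_\chi)=0$ and $\chi\in\mathbf D_\alpha^\ast$. In this argument the role of Lemma \ref{lemma:Bounding} is precisely to keep $m^\alpha$ from dropping too far below $\chi(n+1)^\alpha$ inside a single block, which is what lets the subsequential estimate propagate to all sufficiently large $m$.
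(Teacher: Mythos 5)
Your proposal is correct and follows essentially the same route as the paper's proof: the same decomposition $\phi(\psi(n))-n=\bigl(\phi(\psi(n))-\psi(n)\bigr)+\bigl(\psi(n)-n\bigr)$ to get the limit along the subsequence $m=\phi(\psi(n))$, and the same appeal to Lemma \ref{lemma:Bounding} to bound the block ratio $\bigl(\phi(\psi(n+1))/\phi(\psi(n))\bigr)^\alpha$ when upgrading to the full limit. The only cosmetic difference is in how the inter-block estimate is written out, so there is nothing to flag.
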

\begin{proof}
	Suppose \(\phi, \psi \in \mathbf D_\alpha^\ast\).
	If \(C_{\phi \circ \psi}\) is finite, there is nothing to show since \(\delta_\alpha(C_{\phi \circ \psi}) = 0\).
	So suppose that \(C_{\phi \circ \psi}\) is infinite and note that \(\left\{ \dfrac{\phi(\psi(n+1))^\alpha}{\phi(\psi(n))^\alpha} : n \in \mathbb N \right\}\) is bounded
	by Lemma \ref{lemma:Bounding}.
	Let \(M > 0\) be such that \(\dfrac{\phi(\psi(n+1))^\alpha}{\phi(\psi(n))^\alpha} \leq M\) for all \(n \in \mathbb N\).
	
	Note that
	\begin{align*}
		\frac{\#(C_{\phi \circ \psi} \cap [1,\phi(\psi(n))])}{\phi(\psi(n))^\alpha}
		&= \frac{\phi(\psi(n)) - n}{\phi(\psi(n))^\alpha}\\
		&= \frac{\phi(\psi(n)) - \psi(n) + \psi(n) - n}{\phi(\psi(n))^\alpha}\\
		&= \frac{\phi(\psi(n)) - \psi(n)}{\phi(\psi(n))^\alpha} +  \frac{\psi(n) - n}{\phi(\psi(n))^\alpha}\\
		&= \frac{\phi(\psi(n)) - \psi(n)}{\phi(\psi(n))^\alpha} +  \frac{\psi(n) - n}{\psi(n)^\alpha} \cdot \frac{\psi(n)^\alpha}{\phi(\psi(n))^\alpha}\\
		&\leq \frac{\phi(\psi(n)) - \psi(n)}{\phi(\psi(n))^\alpha} +  \frac{\psi(n) - n}{\psi(n)^\alpha}.
	\end{align*}
	Since \(\phi, \psi \in \mathbf D_\alpha^\ast\),
	\[\lim_{n\to\infty} \frac{\phi(\psi(n)) - \psi(n)}{\phi(\psi(n))^\alpha} = 0\]
	and
	\[\lim_{n \to \infty} \frac{\psi(n) - n}{\psi(n)^\alpha} = 0.\]
	In particular,
	\[\lim_{n\to\infty} \frac{\#(C_{\phi \circ \psi} \cap [1,\phi(\psi(n))])}{\phi(\psi(n))^\alpha} = 0.\]
	
	Now, to show that 
	\[\lim_{n\to\infty} \frac{\#(C_{\phi \circ \psi} \cap [1,n])}{n^\alpha} = 0,\]
	let \(m_n \in \mathbb N\), for \(n > \phi(\psi(1))\), be maximal such that \(\phi(\psi(m_n)) < n\).
	Note that \(\phi(\psi(m_n)) < n \leq \phi(\psi(m_n+1))\) and so
	\begin{align*}
		\frac{\#(C_{\phi \circ \psi} \cap [1, n])}{n^\alpha}
		&\leq \frac{\#(C_{\phi \circ \psi} \cap [1, \phi(\psi(m_n+1))])}{\phi(\psi(m_n))^\alpha} \cdot \frac{\phi(\psi(m_n+1))^\alpha}{\phi(\psi(m_n+1))^\alpha}\\
		&= \frac{\#(C_{\phi \circ \psi} \cap [1, \phi(\psi(m_n+1))])}{\phi(\psi(m_n+1))^\alpha} \cdot \frac{\phi(\psi(m_n+1))^\alpha}{\phi(\psi(m_n))^\alpha}\\
		&\leq \frac{\#(C_{\phi \circ \psi} \cap [1, \phi(\psi(m_n+1))])}{\phi(\psi(m_n+1))^\alpha} \cdot M.
	\end{align*}
	It follows that
	\[0 \leq \lim_{n\to\infty} \frac{\#(C_{\phi \circ \psi} \cap [1, n])}{n^\alpha}
	\leq \lim_{n\to\infty} \frac{\#(C_{\phi \circ \psi} \cap [1, \phi(\psi(m_n+1))])}{\phi(\psi(m_n+1))^\alpha} \cdot M = 0,\]
	concluding the proof.
\end{proof}

We also note here a particular subsemigroup of \(\mathbb S\) that lives within each \(\mathbf D_\alpha^\ast\) which will be used to characterize
a class of locally finite sequences in Proposition \ref{prop:LocallyFinite}.
\begin{definition}
	We let
	\[
		\mathcal C := \{ \phi \in \mathbb S : \# C_\phi < \aleph_0 \},
	\]
	the collection of strictly increasing functions \(\mathbb N \to \mathbb N\)
	that have a cofinite range.
\end{definition}
The following lemma can be proved with a routine argument.
\begin{lemma} \label{lem:CofiniteSequencesThing}
	For any \(\phi \in \mathcal C\), there exist \(n, M\in \mathbb N\) such that \(\phi(n+k) = M+k\)
	for each \(k \in \omega\).
\end{lemma}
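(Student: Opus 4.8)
The plan is to exploit the fact that $\phi \in \mathcal C$ means $C_\phi = \mathbb N \setminus \{\phi(k) : k \in \mathbb N\}$ is finite, so the range of $\phi$ contains every natural number past some threshold. First I would let $M - 1 = \max C_\phi$ (taking $M = 1$ if $C_\phi = \varnothing$), so that $[M,\infty) \cap \mathbb N \subseteq \mathrm{ran}(\phi)$ and moreover $M - 1 \notin \mathrm{ran}(\phi)$ or $M = 1$. Then I would let $n \in \mathbb N$ be the unique index with $\phi(n) = M$; this exists because $M \geq 1$ lies in the range, and it is unique since $\phi$ is injective.

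The key step is then to argue by induction on $k \in \omega$ that $\phi(n+k) = M + k$. The base case $k = 0$ is the choice of $n$. For the inductive step, suppose $\phi(n+k) = M+k$. Since $\phi$ is strictly increasing, $\phi(n+k+1) > M+k$, so $\phi(n+k+1) \geq M+k+1$. On the other hand, $M + k + 1 \geq M$ lies in the range of $\phi$, say $M+k+1 = \phi(j)$; since $\phi(n+k) = M+k < M+k+1 = \phi(j)$ and $\phi$ is strictly increasing, $j > n+k$, hence $j \geq n+k+1$, giving $\phi(n+k+1) \leq \phi(j) = M+k+1$. Combining the two inequalities yields $\phi(n+k+1) = M+k+1$, completing the induction.

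I do not anticipate a genuine obstacle here; the only point requiring minor care is the degenerate case $C_\phi = \varnothing$, where one simply takes $n = M = 1$ and the same induction shows $\phi$ is the identity. The argument relies only on $\phi$ being strictly increasing together with cofiniteness of its range, both of which are built into membership in $\mathcal C$.
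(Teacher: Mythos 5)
Your proof is correct; the paper omits the argument entirely, stating only that the lemma ``can be proved with a routine argument,'' and your induction (take $M-1=\max C_\phi$, locate $n$ with $\phi(n)=M$, and squeeze $\phi(n+k+1)$ between $M+k+1$ from strict monotonicity and $M+k+1$ from cofiniteness of the range) is exactly the routine argument intended. The handling of the degenerate case $C_\phi=\varnothing$ is also fine.
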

\begin{proposition} \label{prop:CofiniteSubsemigroup}
	The set \(\mathcal C\) is a subsemigroup of \(\mathbb S\).
\end{proposition}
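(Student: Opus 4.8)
The plan is to verify directly the two defining conditions for membership in $\mathcal C$ — that $\phi \circ \psi$ is strictly increasing and that $C_{\phi\circ\psi}$ is finite — for arbitrary $\phi,\psi \in \mathcal C$. The first condition is immediate: a composition of strictly increasing functions is strictly increasing, so $\phi \circ \psi \in \mathbb S$; the only content is in showing the range of $\phi\circ\psi$ is cofinite.

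For that, I would compute $C_{\phi\circ\psi}$ explicitly. Writing $R_\psi = \{\psi(n) : n \in \mathbb N\} = \mathbb N \setminus C_\psi$, the injectivity of $\phi$ gives
\[
    \{\phi(\psi(n)) : n \in \mathbb N\} = \{\phi(m) : m \in R_\psi\} = \{\phi(m) : m \in \mathbb N\} \setminus \{\phi(m) : m \in C_\psi\}.
\]
Taking complements in $\mathbb N$, and using that $\{\phi(m) : m \in C_\psi\} \subseteq \{\phi(m) : m \in \mathbb N\}$, this yields the identity
\[
    C_{\phi\circ\psi} = C_\phi \cup \{\phi(m) : m \in C_\psi\}.
\]
Now $C_\phi$ is finite because $\phi \in \mathcal C$, and $\{\phi(m) : m \in C_\psi\}$ is finite because it is the image under $\phi$ of the finite set $C_\psi$ (finite because $\psi \in \mathcal C$). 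Hence $C_{\phi\circ\psi}$ is finite, so $\phi\circ\psi \in \mathcal C$, establishing closure under composition.

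I do not expect any genuine obstacle; the only step requiring a little care is the set-theoretic bookkeeping in the displayed identity, specifically pushing the complement through $\phi$ using injectivity. An alternative route avoiding that identity is to invoke Lemma \ref{lem:CofiniteSequencesThing}: choosing $n_\phi, M_\phi$ with $\phi(n_\phi + k) = M_\phi + k$ for all $k \in \omega$, and $n_\psi, M_\psi$ with $\psi(n_\psi + k) = M_\psi + k$ for all $k \in \omega$, one checks that $\phi(\psi(j)) = (M_\phi + M_\psi - n_\phi - n_\psi) + j$ for all sufficiently large $j$, so the range of $\phi\circ\psi$ is cofinite and again $\phi\circ\psi \in \mathcal C$.
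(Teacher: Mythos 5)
Your main argument is correct but proceeds differently from the paper. The paper first proves Lemma \ref{lem:CofiniteSequencesThing} (every \(\phi \in \mathcal C\) is eventually a translation, \(\phi(n_\phi+k)=M_\phi+k\)) and then exhibits an explicit cofinite tail in the range of the composition by computing \(\phi(\psi(n_\phi+n_\psi+k)) = M_\phi+M_\psi+k\), so that \([M_\phi+M_\psi,\infty) \subseteq \mathrm{range}(\phi\circ\psi)\). You instead compute the complement directly: using injectivity of \(\phi\) to push the set difference through, you obtain the identity \(C_{\phi\circ\psi} = C_\phi \cup \phi[C_\psi]\), a union of two finite sets. Your identity is correct (it is the instance \(A\setminus(B\setminus C)=(A\setminus B)\cup(A\cap C)\) with \(C\subseteq B\subseteq A\)) and is arguably the more informative route: it gives the exact complement and the quantitative bound \(\# C_{\phi\circ\psi} \leq \# C_\phi + \# C_\psi\), and it does not rely on the special "eventual translation" structure of cofinite-range increasing maps, so it would adapt to other ideals of complements. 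The paper's route has the minor advantage of reusing Lemma \ref{lem:CofiniteSequencesThing}, which is already stated for other purposes. Your closing alternative via that lemma is essentially the paper's proof verbatim, and your formula \(\phi(\psi(j)) = (M_\phi+M_\psi-n_\phi-n_\psi)+j\) for large \(j\) agrees with the paper's computation.
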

\begin{proof}
	Let \(\phi , \psi \in \mathcal C\) and, by Lemma \ref{lem:CofiniteSequencesThing}, let \(n_\phi, n_\psi, M_\phi, M_\psi \in \mathbb N\)
	be such that \(\phi(n_\phi+k) = M_\phi + k\) and \(\psi(n_\psi + k) = M_\psi + k\) for all \(k \in \omega\).
	We now show that \([M_\phi+M_\psi , \infty) \subseteq \mathrm{range}(\phi \circ \psi)\).	
	To accomplish this, simply note that, for any \(k \in \omega\),
	\begin{align*}
		\phi(\psi(n_\phi+n_\psi+k))
		&= \phi(\psi(n_\psi + (n_\phi + k)))\\
		&= \phi(M_\psi + n_\phi + k)\\
		&= \phi(n_\phi + (M_\psi + k))\\
		&= M_\phi + M_\psi + k.
	\end{align*}
	This completes the proof.
\end{proof}

We relativize the notion of weakly strongly divergent sequences to the class of subsemigroups of \(\mathbb S\) consisting of \(\mathbf D_\alpha^\ast\).
\begin{definition}
	For a space \(X\) and \(\alpha \in (0,1]\), we define
	\[\mathrm{wSD}^\alpha_X = \{ s \in X^{\mathbb N} : \exists \phi \in \mathbf D_\alpha^\ast\ (s \circ \phi \in \mathrm{SDS}_X) \}.\]
\end{definition}
Note that, for any space \(X\) and \(0 < \alpha \leq \beta \leq 1\), we have that
\[\mathrm{SDS}_X \subseteq \mathrm{wSD}^\alpha_X \subseteq \mathrm{wSD}^\beta_X \subseteq \mathrm{wSD}^1_X \subseteq \mathrm{wSD}_X,\]
and the inclusions are, in general, proper.
Indeed, given any \(s \in \mathrm{SDS}_X\) where \(X\) is an infinite Hausdorff space, one can weave a constant sequence in with \(s\)
to form a sequence \(\varsigma\) with the property that \(\varsigma \circ \phi = s\) for some specified \(\phi \in \mathbb S\) with an
infinite complement \(C_\phi\), and \(\varsigma \circ \psi\) is constant where \(\psi\in \mathbb S\) is an enumeration of \(C_\phi\).

\begin{definition}
	For a space \(X\) and \(\alpha \in (0,1]\), we define
	\[\mathrm{SDS}_X^\alpha = \{ s \in X^{\mathbb N} : \forall \phi \in \mathbf D_\alpha^\ast\ (s \circ \phi \not\in \mathrm{CS}_X) \}.\]
	In other words, \(\mathrm{SDS}^\alpha_X\) consists of all sequences of \(X\) for which no \(\delta_\alpha\)-dense subsequence converges.
\end{definition}
Note that, for any space \(X\) and \(0 < \alpha \leq \beta \leq 1\), we have that
\[\mathrm{SDS}_X \subseteq \mathrm{SDS}^1_X \subseteq \mathrm{SDS}_X^\beta \subseteq \mathrm{SDS}_X^\alpha.\]

Inspired by the notions of statistical convergence and \(s^\ast\)-convergence, we propose the following relativized versions.
\begin{definition}
	Let \(X\) be a space and \(\alpha \in (0,1]\).
	Then a sequence \(s \in X^{\mathbb N}\) is said to \emph{\(\alpha\)-statistically converge to \(x \in X\)} if,
	for every \(U \in \mathcal N_{X,x}\),
	\(\delta_\alpha\left( \{n \in \mathbb N : s_n \not\in U \} \right) = 0\);
	equivalently, \(s\) statistically converges to \(x\) if
	\[(\forall U \in \mathcal N_{X,x})(\exists \phi \in \mathbf D_\alpha^\ast)(\forall n \in \mathbb N)\ s \circ \phi(n) \in U.\]
\end{definition}
\begin{definition}
	Let \(X\) be a space and \(\alpha \in (0,1]\).
	A sequence \(s \in X^{\mathbb N}\) is said to \emph{\(s_\alpha^\ast\)-converge to \(x \in X\)} if there exists \(\phi \in \mathbf D_\alpha^\ast\) such that
	\(s \circ \phi \to x\).
\end{definition}
Note here that the set of \(s^\ast_\alpha\)-convergent sequences of a space \(X\) constitutes precisely the complement of \(\mathrm{SDS}^\alpha_X\).

As with statistical convergence and \(s^\ast\)-convergence, we ask:
\begin{question}
	Are the notions of \(\alpha\)-statistical convergence and \(s_\alpha^\ast\)-convergence equivalent for all \(\alpha \in (0,1]\) and all spaces?
\end{question}

We remark here that there are notions of convergence with respect to ideals of \(\mathbb N\), as in \cite{Lahiri2005}.
However, we will not consider them explicitly in this work.

We end this section with some basic remarks on the behavior of \(\alpha\)-density under composition.
\begin{example}
	The sequences of positive finite \(\alpha\)-density are not generally closed under composition.
	Consider \(\phi \in \mathbb S\) defined by \(\phi(n) = n^2\).
	Note that \(\phi \circ \phi(n) = n^4\).
	Hence, \(\delta_{1/2}(\phi) = 1\) and \(\delta_{1/2}(\phi \circ \phi) = 0\).
\end{example}
\begin{example}
	Sequences of infinite \(\alpha\)-density may compose to form a sequence of zero \(\alpha\)-density.
	Consider \(\phi \in \mathbb S\) defined by \(\phi(n) = n^2\).
	Note that \(\phi \circ \phi(n) = n^4\).
	Hence, \(\delta_{1/3}(\phi) = \infty\) and \(\delta_{1/3}(\phi \circ \phi) = 0\).
\end{example}
\begin{example}
	There are sequences of infinite \(\alpha\)-density such that their corresponding complements also have infinite \(\alpha\)-density.
	Consider \(\phi : \mathbb N \to \mathbb N\) defined by \(\phi(n) = 2n\) and note that \(\delta_{1/2}(\phi) = \infty\) and
	\(\delta_{1/2}(C_\phi) = \infty\).
\end{example}
As a final note, we remark that, unlike sequence convergence, these notions of statistical convergence are not invariant under the action
of \(S_\infty\).
\begin{example}
	Let \(s : \mathbb N \to \mathbb R\) be defined by
	\[
		s_n = \begin{cases} n, & \exists k \in \mathbb N\ (n = k^2)\\
		1/n, & \text{otherwise} \end{cases}
	\]
	Since the set of squares has asymptotic density zero in \(\mathbb N\), \(s\) statistically converges to \(0\); in fact,
	\(s\) \(s^\ast\)-converges to \(0\).
	
	Let \(A = \{ n^2 : n \in \mathbb N \}\) and note that \(\mathbb N \setminus A\) is also an infinite set.
	So we can construct a bijection \(\phi : \mathbb N \to \mathbb N\) such that \(\phi[A] = \mathbb N \setminus A\)
	and \(\phi[\mathbb N \setminus A] = A\).
	Note then that \(s \circ \phi\) fails to statistically converge to \(0\) since \(\delta(\{ n \in \mathbb N : s \circ \phi(n) \not\in (-1,1) \}) = 1\).
\end{example}

\subsection{When is winning on a subsequence enough?}

To stay in the context of sequences, we define, for a space \(X\),
\[\vec{\mathcal O}_X = \{ U \in \mathscr T_X^{\mathbb N} : (\forall x \in X)(\exists n \in \mathbb N)\ x \in U_n \}\]
and
\[\vec{\Gamma}_X = \{ U \in \mathscr T_X^{\mathbb N} : (\forall x \in X)(\exists n \in \mathbb N)(\forall m \geq n)\ x \in U_m \}.\]
Note that these are sequential versions of \(\mathcal O_X\), the set of all open covers of \(X\), and \(\Gamma_X\), the set of all
\(\gamma\)-covers of \(X\).
For more on commonly used cover types, like \(\gamma\)-covers, with relevant references, see \cite{KocinacSelectedResults}.

Now, for a set \(X\), \(\mathcal B \subseteq X^{\mathbb N}\), and \(\mathcal E \subseteq \mathbb S\),
we define
\[w \mathcal B^{\mathcal E} = \{ s \in X^{\mathbb N} : \exists \phi \in \mathcal E\ (s \circ \phi \in \mathcal B) \}\]
and
\[s \mathcal B^{\mathcal E} = \{ s \in X^{\mathbb N} : \forall \phi \in \mathcal E\ (s \circ \phi \in \mathcal B) \}.\]
When \(\mathcal E = \mathbb S\), we will suppress the superscript.

Immediately, we see that
\[
	\vec{\mathsf G}_1(\mathcal O_X, \vec{\mathcal O}_X)
	\rightleftarrows \vec{\mathsf G}_1(\mathcal O_X, w \vec{\mathcal O}_X).
\]

Though the following is known, we provide a proof for the convenience of the reader.
\begin{proposition}
	As topological operators, \(\vec{\Gamma} = s \vec{\mathcal O}\).
	Hence, for any space \(X\),
	\[
		\vec{\mathsf G}_1(\mathcal O_X, \vec{\Gamma}_X)
		\rightleftarrows \vec{\mathsf G}_1(\mathcal O_X, s \vec{\mathcal O}_X).
	\]
\end{proposition}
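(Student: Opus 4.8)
The plan is to establish the set-theoretic identity $\vec{\Gamma}_X = s\vec{\mathcal O}_X$ directly from the definitions; the statement about games then follows immediately, since $s\vec{\mathcal O}$ is by definition $s\vec{\mathcal O}^{\mathbb S}$ and the displayed $\rightleftarrows$ is just an instance of $\mathcal G \rightleftarrows \mathcal G$ once the two operators are shown to coincide (reflexivity of $\leq^+_{\mathrm{II}}$, noted in the excerpt). So the only work is the equality of operators, which I would prove by two inclusions.

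For the inclusion $\vec{\Gamma}_X \subseteq s\vec{\mathcal O}_X$: suppose $U \in \vec{\Gamma}_X$, so that for each $x \in X$ there is $n_x \in \mathbb N$ with $x \in U_m$ for all $m \geq n_x$. Fix $\phi \in \mathbb S$; I must show $U \circ \phi \in \vec{\mathcal O}_X$, i.e.\ that $\langle U_{\phi(k)} : k \in \mathbb N\rangle$ covers $X$ in the sense of $\vec{\mathcal O}$. Given $x$, since $\phi$ is strictly increasing it is unbounded, so pick $k$ with $\phi(k) \geq n_x$; then $x \in U_{\phi(k)}$. That is all that is required. (In fact one gets the stronger conclusion $U \circ \phi \in \vec{\Gamma}_X$, since $\phi(m) \geq \phi(k) \geq n_x$ for all $m \geq k$; this will be reused below.)

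For the reverse inclusion $s\vec{\mathcal O}_X \subseteq \vec{\Gamma}_X$, I would argue by contraposition: suppose $U \notin \vec{\Gamma}_X$, so there is a point $x$ such that the set $A = \{ n \in \mathbb N : x \notin U_n \}$ is infinite. Let $\phi \in \mathbb S$ be the faithful enumeration of $A$ in increasing order. Then $x \notin U_{\phi(k)}$ for every $k$, so $\langle U_{\phi(k)} : k\rangle$ is not a member of $\vec{\mathcal O}_X$, witnessing $U \notin s\vec{\mathcal O}_X$. This completes the two inclusions and hence the operator identity. The final sentence of the proposition is then obtained by applying Theorem~\ref{thm:TranslationTheorem} trivially, or simply by observing that $\vec{\mathsf G}_1(\mathcal O_X, \mathcal B)$ depends on $\mathcal B$ only through its underlying set, so replacing $\vec{\Gamma}_X$ by the equal set $s\vec{\mathcal O}_X$ changes nothing about who wins with which strategy type.

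I do not anticipate a genuine obstacle here: the content is entirely the unwinding of $\vec{\Gamma}$ and $s\vec{\mathcal O}$, and the one point needing a word of care is that membership in $\vec{\mathcal O}_X$ is preserved under precomposition with \emph{arbitrary} $\phi \in \mathbb S$ precisely because strictly increasing maps are cofinal in $\mathbb N$ — which is exactly the feature that makes a $\gamma$-cover-type condition robust under passing to subsequences. The only mild subtlety worth flagging is the degenerate case where $A$ is finite or empty in the contrapositive direction; but $U \notin \vec{\Gamma}_X$ forces $A$ infinite for the relevant $x$, so an increasing enumeration $\phi \in \mathbb S$ genuinely exists, and no special-casing is needed.
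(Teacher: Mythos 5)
Your proof is correct and follows essentially the same route as the paper: the forward inclusion uses unboundedness of strictly increasing maps, and the reverse inclusion negates the $\gamma$-cover condition at a single point $x$ and takes the increasing enumeration of the infinite set of failing indices (the paper phrases this as a recursive construction of $\phi$, which is the same thing). The observation that the game equivalence is then trivial by reflexivity is also exactly how the paper treats it.
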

\begin{proof}
	Let \(X\) be a space.
	First, we show that \(\vec{\Gamma}_X \subseteq s \vec{\mathcal O}_X\).
	So let \(\langle U_n : n \in \mathbb N \rangle \in \vec{\Gamma}_X\) and let \(\phi \in \mathbb S\).
	We need to show that \(\langle U_{\phi(n)} : n \in \mathbb N \rangle \in \vec{\mathcal O}_X\).
	So let \(x \in X\) and \(n \in \mathbb N\) be such that, for all \(m \geq n\), \(x \in U_m\).
	Certainly, there is some \(m \in \mathbb N\) with \(\phi(m) \geq n\), so \(x \in U_{\phi(m)}\).
	That is, \(\langle U_{\phi(n)} : n \in \mathbb N \rangle \in \vec{\mathcal O}_X\).
	
	Now suppose \(\langle U_n : n \in \mathbb N \rangle \not\in \vec{\Gamma}\).
	Then we can let \(x \in X\) be such that, for every \(n \in \mathbb N\), there exists \(m \geq n\) with \(x \not\in U_m\).
	One can then recursively define \(\phi \in \mathbb S\) such that \(x \not\in U_{\phi(n)}\) for every \(n \in \mathbb N\).
	That is, \(\langle U_{\phi(n)} : n \in \mathbb N \rangle \not\in \vec{\mathcal O}_X\).
\end{proof}

It is also immediate that, for Gruenhage's converging \(W\)-game \(\mathsf G_1(\mathcal N_{X,x} , \neg \Gamma_{X,x})\) (see \cite{Gruenhage1976}), where \(\neg \mathcal A\) for a class \(\mathcal A\) denotes the complement of \(\mathcal A\), we have that
\[
	\mathsf G_1(\mathcal N_{X,x} , \neg \Gamma_{X,x})
	\rightleftarrows \mathsf G_1(\mathcal N_{X,x} , w(\neg \Gamma_{X,x})).
\]

Note that, using this notation, \(w\mathrm{SDS}_X = \mathrm{wSD}_X\).
Question 4 of \cite{RemarksOnSHD} can thus be generalized to a question schema.
\begin{question}
	Consider sets \(\mathcal A\), \(\mathcal B \subseteq \left( \bigcup \mathcal A \right)^{\mathbb N}\), and \(\mathcal E \subseteq \mathbb S\).
	When is it true that
	\[\vec{\mathsf S}_1(\mathcal A, w \mathcal B^{\mathcal E}) \implies \vec{\mathsf S}_1(\mathcal A, \mathcal B)?\]
\end{question}
We can extend this question to the context of game inequalities.
\begin{question}
	Consider sets \(\mathcal A\), \(\mathcal B \subseteq \left( \bigcup \mathcal A \right)^{\mathbb N}\), and \(\mathcal E \subseteq \mathbb S\).
	When is it true that
	\[\vec{\mathsf G}_1(\mathcal A, w \mathcal B^{\mathcal E}) \leq_{\mathrm{II}}^+ \vec{\mathsf G}_1(\mathcal A, \mathcal B)?\]
\end{question}
The general question here relates to P2's ability to control how much of \(\mathbb N\) they need to win.
For example, if there is a sequence of \(\mathcal A\) on which P2 can guarantee a winning play on a \(\delta_\alpha\)-dense
subsequence, can P2 \emph{always} guarantee a win on a \(\delta_\alpha\)-dense subsequence?

In relation to these questions, one can ask about the particular case for the Gruenhage clustering \(W\)-game \(\mathsf G_1(\mathcal N_{X,x}, \neg \Omega_{X,x})\)
(see \cite{Gruenhage1976}), where \(\Omega_{X,x} = \{ A \subseteq X : x \in \mathrm{cl}(A) \}\), which we phrase as follows.
\begin{question}
	For a space \(X\) and \(x \in X\), let \[\vec{\Omega}_{X,x} = \left\{ s \in X^{\mathbb N} : x \in \mathrm{cl}\{s_n:n\in\mathbb N\} \right\}.\]
	Is it true that \[\vec{\mathsf G}_1(\mathscr T_X, w(\neg \vec{\Omega}_{X,x})) \leq_{\mathrm{II}}^+ \vec{\mathsf G}_1(\mathscr T_X, \neg \vec{\Omega}_{X,x})?\]
\end{question}

\section{Results on Selective Divergence Properties}

\subsection{Some commentary on discrete selectivity}

We first note that allowing for finite selections in the closed discrete selection game doesn't add anything.
\begin{proposition} \label{prop:CDSingleFiniteSame}
	For any space \(X\),
	\[\mathsf G_1(\mathscr T_X, \mathrm{CD}_X)
	\rightleftarrows \mathsf G_{\mathrm{fin}}(\mathscr T_X, \mathrm{CD}_X).\]
\end{proposition}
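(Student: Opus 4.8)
The plan is to establish both of the inequalities packaged in \(\rightleftarrows\), namely \(\mathsf G_1(\mathscr T_X,\mathrm{CD}_X) \leq^+_{\mathrm{II}} \mathsf G_{\mathrm{fin}}(\mathscr T_X,\mathrm{CD}_X)\) and \(\mathsf G_{\mathrm{fin}}(\mathscr T_X,\mathrm{CD}_X) \leq^+_{\mathrm{II}} \mathsf G_1(\mathscr T_X,\mathrm{CD}_X)\), by exhibiting explicit strategy translations that are ``turn-for-turn'': the translated strategy on a given history looks only at a trivial recoding of that same history, so that the full-information, Markov, predetermined, and constant cases all come along for free and one never has to verify the five clauses of \(\leq^+_{\mathrm{II}}\) separately.

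Before the translations, I would isolate the one genuinely topological ingredient: \(\mathrm{CD}_X\) is closed under passing to subsets. If \(A \in \mathrm{CD}_X\) and \(B \subseteq A\), then every point of \(B\) remains relatively isolated in \(B\) (a neighborhood \(U\) with \(U \cap A = \{x\}\) also has \(U \cap B = \{x\}\)), and \(B\) is closed because any \(x \in \overline B\) lies in \(\overline A = A\), hence has a neighborhood \(U\) with \(U \cap A = \{x\}\), so \(\varnothing \neq U \cap B \subseteq \{x\}\) forces \(x \in B\); no separation axioms enter. I would also fix once and for all a faithful enumeration of \(X\), so that for a nonempty finite \(F \subseteq X\) the symbol \(\min F\) denotes a definite element of \(F\), making the selection maps below honest functions of the history.

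The direction \(\mathsf G_1 \leq^+_{\mathrm{II}} \mathsf G_{\mathrm{fin}}\) is the soft one and holds for arbitrary operators. Given a strategy for P2 in \(\mathsf G_1(\mathscr T_X,\mathrm{CD}_X)\), let P2 in \(\mathsf G_{\mathrm{fin}}\) respond with the singleton of the point that strategy would play; since \(\bigcup_n\{x_n\} = \{x_n : n \in \mathbb N\}\), the outcome set is literally the same, so winning transfers. Dually, a P1 strategy in \(\mathsf G_{\mathrm{fin}}\) is used against P2 in \(\mathsf G_1\) by reading each point \(x_n\) as the legal finite move \(\{x_n\}\); again the outcome set is unchanged. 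Because the translated strategies inspect only the obvious recoding of the same history, they are Markov, predetermined, or constant exactly when the originals are.

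The direction \(\mathsf G_{\mathrm{fin}} \leq^+_{\mathrm{II}} \mathsf G_1\) is where the subset-closure lemma does the work. From a P2 strategy \(\tau\) in \(\mathsf G_{\mathrm{fin}}\), I would build a P2 strategy in \(\mathsf G_1\) that plays \(\min \tau(U_1,\dots,U_n) \in U_n\); the resulting set \(\{x_n : n \in \mathbb N\}\) is a subset of \(\bigcup_n \tau(U_1,\dots,U_n) \in \mathrm{CD}_X\), hence itself lies in \(\mathrm{CD}_X\). From a P1 strategy \(\sigma\) in \(\mathsf G_1\), build a P1 strategy in \(\mathsf G_{\mathrm{fin}}\) that, after seeing P2's finite moves \(F_1,\dots,F_{n-1}\), plays \(\sigma(\min F_1,\dots,\min F_{n-1})\); since each \(F_k\) is contained in P1's \(k\)th move, the sequence \(\langle \min F_k : k \in \mathbb N\rangle\) is a legal \(\mathsf G_1\)-play against \(\sigma\), so \(\{\min F_k : k \in \mathbb N\} \notin \mathrm{CD}_X\), and being contained in \(\bigcup_k F_k\) it witnesses, by the contrapositive of the lemma, that \(\bigcup_k F_k \notin \mathrm{CD}_X\). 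Preservation of the restricted strategy classes is once more automatic. The only real obstacle in the whole argument is noticing that \(\mathrm{CD}_X\) is downward closed under subsets — without that, \(\mathsf G_{\mathrm{fin}}\) could conceivably be strictly easier for P2 — and everything past it is routine bookkeeping with histories.
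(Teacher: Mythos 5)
Your proof is correct and follows essentially the same route as the paper: the trivial direction \(\mathsf G_1 \leq^+_{\mathrm{II}} \mathsf G_{\mathrm{fin}}\) via singleton moves, and the other direction via the observation that \(\mathrm{CD}_X\) is closed under passing to subsets, so that a single point chosen from each finite move of P2 (or fed back to a \(\mathsf G_1\)-strategy of P1) witnesses everything needed. The paper merely states this subset-closure fact and the rest in one line; your write-up supplies the same argument in full detail, including the turn-for-turn bookkeeping that makes the Markov, predetermined, and constant cases automatic.
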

\begin{proof}
	We need only show that
	\[\mathsf G_{\mathrm{fin}}(\mathscr T_X , \mathrm{CD}_X)
		\leq_{\mathrm{II}}^+ \mathsf G_1(\mathscr T_X, \mathrm{CD}_X).\]
	This follows from the fact that, if \(\bigcup \{ F_n : n \in \mathbb N \} \in \mathrm{CD}_X\)
	and \(x_n\in F_n\) for each \(n \in \mathbb N\), then \(\{x_n : n \in \mathbb N \} \subseteq \bigcup \{ F_n : n \in \mathbb N\}\) and hence,
	\(\{x_n:n\in\mathbb N\} \in \mathrm{CD}_X\).
\end{proof}

We list a few properties that are incompatible with being discretely selective.
Though Propositions \ref{prop:InfiniteHausdorffCountablyCompact} and \ref{prop:CountableLocalPiBase}
and Corollary \ref{cor:SequentiallyCompact} are surely known, we record proofs here for the convenience of the reader.
\begin{proposition} \label{prop:InfiniteHausdorffCountablyCompact}
	No infinite Hausdorff countably compact space is discretely selective.
\end{proposition}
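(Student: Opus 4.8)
The plan is to derive a contradiction from the assumption that an infinite Hausdorff countably compact space $X$ is discretely selective. First I would use the fact that $X$ is infinite and Hausdorff (so in particular $T_1$) to extract a countably infinite subset and, from it, a sequence $\langle U_n : n \in \mathbb N \rangle$ of nonempty open sets witnessing the discrete selectivity hypothesis. The cleanest route: pick distinct points $y_n \in X$ for $n \in \mathbb N$, and — using the $T_1$ separation — choose open $U_n$ with $y_n \in U_n$ and $y_j \notin U_n$ for each $j < n$ (or more simply just take $U_n = X$ for all $n$, since any selection sequence then ranges over an infinite set provided we additionally arrange the selections to be distinct; but to force distinctness it is cleaner to shrink the $U_n$ as above so that the only constraint used later is that the selection set is infinite).

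Next I would invoke discrete selectivity applied to $\langle U_n : n \in \mathbb N \rangle$: there is a selection $x_n \in U_n$ with $D = \{x_n : n \in \mathbb N\} \in \mathrm{CD}_X$. The key point is to guarantee $D$ is infinite. With the shrinking construction above, each $x_n \in U_n$ forces $x_n \notin \{x_1, \ldots, x_{n-1}\}$ is not automatic — rather, $x_n \neq x_j$ for $j<n$ follows because $x_j \in U_j \not\ni$ the point... hmm, I need the sets arranged so that $U_n \cap \{x_1,\dots,x_{n-1}\} = \varnothing$, which I can get by recursion: having chosen $x_1, \dots, x_{n-1}$, use $T_1$-ness to pick $U_n$ a neighborhood of some point avoiding the finite set $\{x_1,\dots,x_{n-1}\}$. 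Since discrete selectivity must hold for \emph{every} sequence of open sets, I am free to build the sequence adaptively in this way. Then $D$ is infinite, closed, and relatively discrete.

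Finally I would reach the contradiction via countable compactness: an infinite closed subset $D$ of a countably compact space is itself countably compact (closed subspaces of countably compact spaces are countably compact), but an infinite relatively discrete space is not countably compact — indeed $D$, being infinite and discrete, admits the open cover by singletons with no finite subcover, or equivalently contains a countably infinite closed discrete subset with no accumulation point in $D$, contradicting countable compactness. Alternatively, and perhaps more directly: an infinite subset of a countably compact space always has an accumulation point in $X$; since $D$ is closed, that accumulation point lies in $D$; but relative discreteness of $D$ says no point of $D$ is an accumulation point of $D$ — contradiction. That last formulation avoids even needing to pass to the subspace. The main obstacle is purely bookkeeping: ensuring the selection set comes out genuinely infinite, which is why I would set up the open sets by the recursive $T_1$-avoidance argument rather than trying to use a fixed sequence.
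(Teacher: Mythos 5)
There is a genuine gap in the step where you try to force the selection set to be infinite. Discrete selectivity is the selection principle $\mathsf S_1(\mathscr T_X,\mathrm{CD}_X)$: the \emph{entire} sequence $\langle U_n : n\in\mathbb N\rangle$ must be specified in advance, and only then does a selection $x_n\in U_n$ with $\{x_n : n\in\mathbb N\}\in\mathrm{CD}_X$ appear. Your ``adaptive'' recursion --- choose $x_1,\dots,x_{n-1}$, then pick $U_n$ avoiding that finite set --- is not licensed by this hypothesis: the point $x_1$ that works is only guaranteed to exist as part of a selection for the whole (already fixed) sequence, so you cannot commit to $x_1$ before $U_2,U_3,\dots$ are defined. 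What your recursion actually describes is defeating a full-information strategy for P1 in $\mathsf G_1(\mathscr T_X,\mathrm{CD}_X)$, i.e.\ $\mathrm{I}\not\uparrow$, which in the hierarchy recorded in this paper is strictly stronger than the selection principle $\mathrm{I}\underset{\mathrm{pre}}{\not\uparrow}$. Your non-adaptive fallbacks do not close the hole either: with $U_n=X$ for all $n$, or with $U_n$ merely separating the chosen witnesses $y_j$, nothing prevents the guaranteed selection from being a constant (or finite-range) sequence, since the $U_n$ may still have large common intersection.

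The missing ingredient is the standard fact that every infinite Hausdorff space contains an infinite \emph{pairwise disjoint} family of nonempty open sets; this is the ``routine argument'' the paper invokes. Feeding such a cellular sequence $\langle U_n : n\in\mathbb N\rangle$ to the selection principle forces the $x_n$ to be distinct with no adaptivity at all, and from there your concluding step is fine: the infinite set $D=\{x_n : n\in\mathbb N\}$ is closed and relatively discrete, yet countable compactness (equivalently, every infinite subset has an accumulation point, which must lie in $D$ since $D$ is closed) contradicts relative discreteness. That final contradiction is a mild variant of the paper's, which instead exhibits the explicit countable open cover $\{V_n : n\in\mathbb N\}\cup\{X\setminus D\}$ with no finite subcover; the two formulations are interchangeable here.
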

\begin{proof}
	Suppose \(X\) is an infinite Hausdorff space which is discretely selective.
	By a routine argument\footnote{See, for example, \url{https://math.stackexchange.com/questions/547517}},
	we can let \(\langle U_n : n \in \mathbb N \rangle\) be a sequence of nonempty
	open sets such that \(\{U_n : n \in \mathbb N\}\) is pairwise disjoint.
	Since \(X\) is discretely selective, we can choose \(x_n \in U_n\) for each \(n \in \mathbb N\)
	to be such that \(\{x_n:n\in\mathbb N\}\) is closed and relatively discrete.
	So, for each \(n \in \mathbb N\), let \(V_n \in \mathscr T_X\) be such that \(V_n \cap \{x_j:j\in\mathbb N\} = \{x_n\}\)
	and consider the cover
	\[\mathscr V = \{ V_n : n \in \mathbb N \} \cup \{ X \setminus \{x_n:n\in\mathbb N\} \}.\]
	Note that \(\mathscr V\) is a countable open cover of \(X\) with no finite subcover.
	Hence, \(X\) is not countably compact.
\end{proof}
Consequently, the property of being discretely selective doesn't generally transfer to the {\v{C}}ech-Stone compactification
of a Tychonoff space.
\begin{example}
	The discrete space \(\omega\) is discretely selective but \(\beta \omega\) is not discretely selective.
\end{example}

Since every sequentially compact space is countably compact, we obtain the following immediate corollary.
\begin{corollary} \label{cor:SequentiallyCompact}
	No infinite Hausdorff sequentially compact space is discretely selective.
\end{corollary}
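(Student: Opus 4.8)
The plan is to reduce immediately to Proposition \ref{prop:InfiniteHausdorffCountablyCompact} via the standard implication that sequential compactness entails countable compactness. So the first (and essentially only) step is to verify: if \(X\) is sequentially compact, then \(X\) is countably compact. This needs no separation axioms. Suppose toward a contradiction that \(\langle U_n : n \in \mathbb N \rangle\) is a countable open cover of \(X\) with no finite subcover; replacing \(U_n\) by \(\bigcup_{k \leq n} U_k\), we may assume the sequence is increasing, and for each \(n\) we may pick \(x_n \in X \setminus U_n\). By sequential compactness, pass to \(\phi \in \mathbb S\) with \(x \circ \phi \to x\) for some \(x \in X\). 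Since the \(U_n\) cover \(X\), there is \(m\) with \(x \in U_m\); since \(U_m \in \mathcal N_{X,x}\), there is \(M\) with \(x_{\phi(n)} \in U_m\) for all \(n \geq M\). But for \(n\) large enough that \(\phi(n) \geq m\) (using monotonicity of \(\langle U_n \rangle\)), we have \(x_{\phi(n)} \notin U_{\phi(n)} \supseteq U_m\), a contradiction.

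With that in hand, the corollary is immediate: if \(X\) is an infinite Hausdorff sequentially compact space, then \(X\) is an infinite Hausdorff countably compact space, so by Proposition \ref{prop:InfiniteHausdorffCountablyCompact} it is not discretely selective. I would keep the write-up to one short paragraph essentially quoting the above, since this is flagged in the text as an ``immediate corollary.''

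There is no real obstacle here; the only point worth a remark is that the sequential-compactness-implies-countable-compactness step requires no separation hypothesis, so the Hausdorff assumption in the statement is inherited solely from its role in Proposition \ref{prop:InfiniteHausdorffCountablyCompact} (where it is used to produce the pairwise disjoint open sets).
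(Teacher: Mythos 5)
Your proposal is correct and matches the paper exactly: the paper also derives the corollary immediately from Proposition \ref{prop:InfiniteHausdorffCountablyCompact} via the standard fact that sequential compactness implies countable compactness, which it states without proof. Your verification of that implication is a correct (and standard) filling-in of the one step the paper leaves implicit.
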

Note that any finite discrete space is an example of a discretely selective Hausdorff space which is sequentially compact.
\begin{example} \label{ex:CechStoneOmega}
	The space \((\beta \omega)^{\omega_1}\) is SHD but not discretely selective.
	Indeed, since \(\beta\omega\) is not sequentially compact (see \cite{Counterexamples}\footnote{See also \url{https://dantopology.wordpress.com/2012/10/01/stone-cech-compactification-of-the-integers-basic-facts/}}),
	\cite[Corollary 1]{RemarksOnSHD} guarantees that \((\beta \omega)^{\omega_1}\)
	is SHD.
	As an infinite compact Hausdorff space, \((\beta\omega)^{\omega_1}\) is not discretely selective.
\end{example}
In fact, the SHD property, even in the context of topological groups, does not imply the discretely selective property.
\begin{example} \label{ex:CirclePower}
	There is a connected Hausdorff group which is SHD but not discretely selective.
	Consider the usual circle group, \(\mathbb T = \{ z \in \mathbb C : |z| = 1 \}\); that is, the complex numbers of modulus \(1\) with complex multiplication.
	Note that \(\mathbb T\) is a connected compact Hausdorff group.
	Now, \([0,1]^{\mathfrak c}\) embeds into \(\mathbb T^{\mathfrak c}\) as a closed subspace.
	As \([0,1]^{\mathfrak c}\) is not sequentially compact (see \cite[Space 105]{Counterexamples}), \(\mathbb T^{\mathfrak c}\)
	is not sequentially compact.
	Note then that \((\mathbb T^{\mathfrak c})^{\omega_1} \simeq \mathbb T^{\mathfrak c}\) is SHD by \cite[Corollary 1]{RemarksOnSHD}.
	Since \(\mathbb T^{\mathfrak c}\) is an infinite compact Hausdorff space, it is not discretely selective.
\end{example}
For the next example, 
recall that the \emph{splitting number} \(\mathfrak s\) is the smallest cardinality of a subset \(\mathbf S\) of \([\omega]^{\aleph_0}\)
that has the property that, for every \(A \in [\omega]^{\aleph_0}\), there exists \(S \in \mathbf S\) such that
both \(A \cap S\) and \(A \setminus S\) are infinite.
For more on this cardinal, see \cite{Halbeisen}.
\begin{example} \label{ex:2splitting}
	The space \(2^{\mathfrak s}\) is a zero-dimensional compact Hausdorff group which is SHD but not discretely selective.
	\(2^{\mathfrak s}\) is SHD by \cite[Theorem 2.1]{BellaSpadaro} and it is not discretely selective as an infinite compact space.
\end{example}
\begin{example}
	Any discrete group is an example of a topological group which is discretely selective but not SHD.
\end{example}
\begin{question}
	Is there a Hausdorff group with no isolated points that is discretely selective but not SHD?
\end{question}
Recall that a \emph{local \(\pi\)-base} at a point \(x\) of a space \(X\) is a collection \(\mathcal B\)
of open subsets of \(X\) such that, for any \(U \in \mathcal N_{X,x}\), there exists \(V \in \mathcal B\)
with \(V \subseteq U\).
\begin{proposition} \label{prop:CountableLocalPiBase}
	Suppose \(X\) is a space and that \(x \in X\) is such that \(\{x\}\) is closed in \(X\), \(x\) is not an isolated point,
	and there is a countable local \(\pi\)-base at \(x\).
	Then \(X\) is not discretely selective.
	
	In particular, no first-countable \(T_1\) space without isolated points is discretely selective.
\end{proposition}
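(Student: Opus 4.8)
The plan is to build, directly from a countable local $\pi$-base at $x$, a single sequence of nonempty open sets for which \emph{every} selection produces a set that has $x$ in its closure but not among its members, and hence fails to belong to $\mathrm{CD}_X$. Concretely, first I would fix an enumeration $\{B_n : n \in \mathbb N\}$ of a countable local $\pi$-base at $x$ and put $U_n = B_n \setminus \{x\}$ for each $n \in \mathbb N$. Since $\{x\}$ is closed, each $U_n$ is open; and $U_n \neq \varnothing$ because $B_n$ is a nonempty open set which cannot be contained in $\{x\}$ (otherwise $B_n = \{x\}$ would be open, contradicting that $x$ is not isolated). Thus $\langle U_n : n \in \mathbb N \rangle \in \mathscr T_X^{\mathbb N}$.

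Next I would show that this sequence cannot be defeated. Suppose $y_n \in U_n$ is chosen for each $n \in \mathbb N$. Then $y_n \neq x$ for every $n$, so $x \notin \{y_n : n \in \mathbb N\}$. On the other hand, given any $U \in \mathcal N_{X,x}$, the $\pi$-base property furnishes $n$ with $B_n \subseteq U$, and then $y_n \in U_n \subseteq B_n \subseteq U$, so $U \cap \{y_m : m \in \mathbb N\} \neq \varnothing$. Hence $x \in \mathrm{cl}(\{y_m : m \in \mathbb N\}) \setminus \{y_m : m \in \mathbb N\}$, so $\{y_m : m \in \mathbb N\}$ is not closed and therefore is not a member of $\mathrm{CD}_X$. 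Since the selection was arbitrary, $X \not\models \mathsf S_1(\mathscr T, \mathrm{CD})$, i.e.\ $X$ is not discretely selective.

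For the parenthetical consequence: in a first-countable $T_1$ space without isolated points, every singleton is closed, no point is isolated, and any countable neighborhood base at a point is in particular a countable local $\pi$-base there; since the space is nonempty by convention, applying the first part to any one of its points completes the argument.

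I do not expect a genuine obstacle here. The only step requiring a moment's care is the verification that each $U_n$ is nonempty, which is precisely where the hypothesis that $x$ is not isolated enters (together with the fact that $\pi$-base members are nonempty). Everything else is a straightforward unwinding of the definitions of a local $\pi$-base and of $\mathrm{CD}_X$.
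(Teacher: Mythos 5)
Your proposal is correct and follows essentially the same route as the paper: puncture each member of the countable local \(\pi\)-base at \(x\), observe the punctured sets are nonempty open because \(x\) is non-isolated and \(\{x\}\) is closed, and note that any selection accumulates at \(x\) without containing it, so it cannot be closed. The only difference is that you spell out the nonemptiness check and the closure argument in more detail than the paper does.
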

\begin{proof}
	Let \(\{U_n : n \in \mathbb N\}\) be a local \(\pi\)-base at \(x\) and consider the sequence \(\langle U_n \setminus \{x\} : n \in \mathbb N \rangle\)
	of nonempty open subsets of \(X\).
	Let \(y_n \in U_n \setminus \{x\}\) for each \(n \in \mathbb N\) and notice that
	\(x \in \mathrm{cl}_X\{y_n:n\in\mathbb N\}\).
	Hence, \(X\) is not discretely selective.
\end{proof}
Note however that any discrete space is an example of a \(T_1\) first-countable space which is discretely selective.

\begin{proposition}
	For spaces \(X\) and \(Y\), if \(\{ x_n : n \in \mathbb N \} \in \mathrm{CD}_X\) and \(\{ y_n : n \in \mathbb N \} \in \mathrm{CD}_Y\),
	then \(\{ \langle x_n, y_n \rangle : n \in \mathbb N \} \in \mathrm{CD}_{X \times Y}\).
	Consequently, for any \(\mathbf \Pi \in \left\{ \mathrm{II}\underset{\mathrm{mark}}{\uparrow}, \mathrm{II}\uparrow, \mathrm{I}\not\uparrow,
	\mathrm{I}\underset{\mathrm{pre}}{\not\uparrow}, \mathrm{I}\underset{\mathrm{cnst}}{\not\uparrow} \right\}\),
	\[
		\left( \mathbf \Pi \mathsf{G}_1(\mathscr T_X, \mathrm{CD}_X) \wedge \mathbf \Pi \mathsf{G}_1(\mathscr T_Y, \mathrm{CD}_Y) \right)
		\implies \mathbf \Pi \mathsf{G}_1(\mathscr T_{X \times Y} , \mathrm{CD}_{X \times Y}).
	\]
	In particular, the property of being discretely selective is finitely productive.
\end{proposition}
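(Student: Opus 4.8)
The plan is to prove the purely point‑set assertion about $\mathrm{CD}$ first and then bootstrap the five game implications from it, using a device that turns a single P1 move in the product game into a pair of moves, one in each factor. For the point‑set fact, write $A = \{x_n : n \in \mathbb N\}$, $B = \{y_n : n \in \mathbb N\}$, and $Z = \{\langle x_n, y_n\rangle : n \in \mathbb N\}$; using $A \in \mathrm{CD}_X$ and $B \in \mathrm{CD}_Y$, fix for each $a \in A$ an open $U_a$ with $U_a \cap A = \{a\}$ and for each $b \in B$ an open $V_b$ with $V_b \cap B = \{b\}$. For relative discreteness, observe that for $\langle x_n, y_n\rangle \in Z$ the box $U_{x_n} \times V_{y_n}$ meets $Z$ only in $\langle x_n, y_n\rangle$, since $\langle x_m, y_m\rangle$ in it forces $x_m \in U_{x_n} \cap A = \{x_n\}$ and $y_m \in V_{y_n} \cap B = \{y_n\}$. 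For closedness, take $\langle p, q\rangle \notin Z$: if $p \notin A$ then $(X \setminus A) \times Y$ is a neighbourhood of $\langle p, q\rangle$ disjoint from $Z$, since $A$ is closed and $Z \subseteq A \times B$; if $q \notin B$, argue symmetrically; and if $p \in A$ and $q \in B$, then $U_p \times V_q$ is a neighbourhood of $\langle p, q\rangle$ that can contain no $\langle x_m, y_m\rangle$, for such a point would force $x_m = p$ and $y_m = q$, that is, $\langle p, q\rangle \in Z$. Hence $Z \in \mathrm{CD}_{X \times Y}$.

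For the game implications, use the axiom of choice to fix, for each $W \in \mathscr T_{X \times Y}$, a nonempty open box $U_W \times V_W \subseteq W$; this is the device converting a P1 move in $\mathsf{G}_1(\mathscr T_{X \times Y}, \mathrm{CD}_{X \times Y})$ into a P1 move of each factor game. If $\tau_X, \tau_Y$ are winning strategies for P2 in the two factor games, then the P2 strategy that responds to a partial play $\langle W_1, \dots, W_n\rangle$ by $\langle \tau_X(U_{W_1}, \dots, U_{W_n}), \tau_Y(V_{W_1}, \dots, V_{W_n})\rangle$ plays inside each $W_n$ and produces coordinate sequences lying in $\mathrm{CD}_X$ and $\mathrm{CD}_Y$, hence a play in $\mathrm{CD}_{X \times Y}$ by the point‑set fact; this settles $\mathrm{II}\uparrow$, and the identical recipe with Markov strategies settles $\mathrm{II}\underset{\mathrm{mark}}{\uparrow}$ since $U_W$ and $V_W$ depend on $W$ alone. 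For $\mathrm{I}\underset{\mathrm{pre}}{\not\uparrow}$ and $\mathrm{I}\underset{\mathrm{cnst}}{\not\uparrow}$, rephrase them, by Remark \ref{remark:LindelofAndSelection}, as $\mathsf{S}_1(\mathscr T, \mathrm{CD})$ and as ``every nonempty open set contains a countable member of $\mathrm{CD}$'' respectively; then, given a sequence (or a single instance) of nonempty open subsets of $X \times Y$, shrink each to a box, apply the corresponding property in $X$ and in $Y$, and combine the outputs coordinatewise (enumerating the countable sets in the second case), always invoking the point‑set fact to see the combined object lies in $\mathrm{CD}_{X \times Y}$. In particular the $\mathrm{I}\underset{\mathrm{pre}}{\not\uparrow}$ case already gives that discrete selectivity is finitely productive.

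The remaining clause, $\mathrm{I}\not\uparrow$, is where I expect the main difficulty. We are handed an arbitrary P1 strategy $\sigma$ in $\mathsf{G}_1(\mathscr T_{X \times Y}, \mathrm{CD}_{X \times Y})$ and must produce a $\sigma$‑consistent P2 play with closed discrete range, but the $n$‑th move of $\sigma$ depends on \emph{both} coordinates of P2's earlier moves, so one cannot simply read off independent P1 strategies for the factor games and apply their $\mathrm{I}\not\uparrow$ hypotheses. My plan is to reduce, using the box‑selection map and the point‑set fact, to the ``conjunction'' game in which P1 plays pairs $\langle U_n, V_n\rangle \in \mathscr T_X \times \mathscr T_Y$ and P2 wins exactly when the first coordinates of its replies form a member of $\mathrm{CD}_X$ and the second coordinates form a member of $\mathrm{CD}_Y$: by the point‑set fact any P2 win there is a P2 win in the box‑restricted product game, so a winning P1 strategy in the box‑restricted game would also win the conjunction game, while a winning P1 strategy in the full product game yields one in the box‑restricted game by replacing each move with a box inside it. Everything thus comes down to showing that $\mathrm{I}\not\uparrow$ is inherited by a conjunction of two selection games, which is the delicate combinatorial core; I would attack it by a strategy‑tree pruning argument simultaneously defeating the two ``projected'' behaviours of $\sigma$, exploiting that $\mathscr T_X$ and $\mathscr T_Y$ are closed under unions so that the union of the first‑coordinate moves of $\sigma$ over all compatible second‑coordinate continuations is again a legitimate P1 move, and, should a clean direct argument resist, by first passing to the finite‑selection reformulation $\mathsf{G}_{\mathrm{fin}}(\mathscr T, \mathrm{CD})$ afforded by Proposition \ref{prop:CDSingleFiniteSame}, where the extra latitude in P2's moves should ease the interleaving.
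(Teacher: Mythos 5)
Your point-set argument that the diagonal set \(\{\langle x_n,y_n\rangle : n\in\mathbb N\}\) is closed and relatively discrete is correct and is essentially the paper's own argument (the same case split on whether \(p\notin A\), whether \(q\notin B\), or whether both coordinates lie in the respective sets, plus the box \(U_{x_n}\times V_{y_n}\) for relative discreteness), and your device of fixing a box \(U_W\times V_W\subseteq W\) for each \(W\in\mathscr T_{X\times Y}\) is exactly the device the paper fixes for the game-theoretic part. Your treatments of \(\mathrm{II}\underset{\mathrm{mark}}{\uparrow}\), \(\mathrm{II}\uparrow\), \(\mathrm{I}\underset{\mathrm{pre}}{\not\uparrow}\), and \(\mathrm{I}\underset{\mathrm{cnst}}{\not\uparrow}\) are complete and correct.

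The gap is the \(\mathrm{I}\not\uparrow\) clause, which you explicitly leave as a plan rather than a proof. Your reduction to the ``conjunction game'' is sound, but the core claim --- that \(\mathrm{I}\not\uparrow\) for each factor game yields \(\mathrm{I}\not\uparrow\) for the conjunction --- is precisely the statement that needs an argument, and neither of the attacks you sketch closes it. The union device (replacing P1's \(n\)-th \(X\)-move by the union, over all compatible second-coordinate continuations, of the first coordinates of \(\sigma\)'s moves) produces, from a run \(\vec x\) defeating the derived \(X\)-strategy, only round-by-round witness tuples \(\langle y_1,\dots,y_n\rangle\) that need not cohere as \(n\) grows; the tree of coherent witnesses is infinitely branching, so one cannot extract a single \(\sigma\)-compatible infinite run, and even if one could, nothing would control \(\{y_n : n\in\mathbb N\}\) lying in \(\mathrm{CD}_Y\). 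Passing to \(\mathsf G_{\mathrm{fin}}\) via Proposition \ref{prop:CDSingleFiniteSame} does not obviously remove this obstruction either. So, as written, your proposal establishes four of the five game implications and only gestures at the fifth. In fairness, the paper itself supplies nothing more here than the same box-selection device followed by the assertion that the verification is ``routine and left to the reader,'' so you have not overlooked an idea that the paper makes explicit; but a complete proof would still have to supply the missing interleaving argument for full-information P1 strategies.
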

\begin{proof}
	Suppose that \(\{x_n:n\in\mathbb N\} \in \mathrm{CD}_X\) and \(\{y_n : n \in \mathbb N \} \in \mathrm{CD}_Y\).
	We show that \(\{ \langle x_n, y_n \rangle : n \in \mathbb N \} \in \mathrm{CD}_{X \times Y}\).
	
	To see that \(\{ \langle x_n,y_n \rangle : n \in \mathbb N\}\) is closed, let \(\langle x,y \rangle \not\in \{ \langle x_n,y_n \rangle : n \in \mathbb N \}\).
	We proceed by cases.
	
	Suppose \(x \not\in \{ x_n :n \in \mathbb N\}\) and let \(W = X \setminus \{ x_n : n \in \mathbb N \}\).
	Note that \(\langle x,y \rangle \in W \times Y\) and that \(W \times Y \cap \{ \langle x_n,y_n \rangle : n \in \mathbb N \} = \varnothing\).
	
	Suppose \(y \not\in \{ y_n : n \in \mathbb N \}\).
	Then, for \(W = Y \setminus \{ y_n : n \in \mathbb N \}\), just as above, \(X \times W\) is a neighborhood of \(\langle x,y \rangle\)
	that is disjoint from \(\{ \langle x_n, y_n \rangle : n \in \mathbb N \}\).
	
	Otherwise, suppose \(x \in \{ x_n : n \in \mathbb N \}\) and \(y \in \{y_n:n\in\mathbb N\}\).
	Let \(j,k \in \mathbb N\) be such that \(x = x_j\) and \(y = y_k\).
	Note that \(j \neq k\) since \(\langle x,y \rangle \not\in \{ \langle x_n,y_n \rangle : n \in \mathbb N \}\).
	Let \(W_x \in \mathscr T_X\) and \(W_y \in \mathscr T_Y\) be such that \(W_x \cap \{ x_n : n \in \mathbb N \} = \{ x_j \}\)
	and \(W_y \cap \{ y_n :n \in \mathbb N \} = \{ y_k \}\).
	Then \(\langle x,y \rangle \in W_x \times W_y\).
	Note that \(W_x \times W_y \cap \{ \langle x_n , y_n \rangle : n \in \mathbb N\} = \varnothing\).
	
	For relative discreteness, let \(j \in \mathbb N\).
	Then let \(W_x \in \mathscr T_X\) and \(W_y \in \mathscr T_Y\) be such that \(W_x \cap \{ x_n : n \in \mathbb N \} = \{ x_j \}\)
	and \(W_y \cap \{ y_n : n \in \mathbb N \} = \{ y_j \}\).
	Note that \(W_x \times W_y \cap \{ \langle x_n,y_n \rangle : n \in \mathbb N \}  = \{ \langle x_j,y_j \rangle \}\).
	
	Hence, \(\{ \langle x_n, y_n \rangle : n \in \mathbb N \} \in \mathrm{CD}_{X \times Y}\).
	
	For the game-theoretic implication, fix choices \(U_W \in \mathscr T_X\) and \(V_W \in \mathscr T_Y\) for each \(W \in \mathscr T_{X \times Y}\)
	so that \(U_W \times V_W \subseteq W\).
	Note that choices \(x \in U_W\) and \(y \in V_W\) satisfy \(\langle x, y \rangle \in U_W \times V_W \subseteq W\).
	Verifying each case proposed for the game-theoretic implication is thus routine and left to the reader.
\end{proof}

However, discrete selectivity is not preserved by arbitrary products.
\begin{example}
	The discrete space \(\omega\) is discretely selective, but \(\omega^\omega\) is not discretely selective.
\end{example}

As can be seen from \cite[Theorem 23]{ClontzHolshouser} and \cite[Theorem 13]{RemarksOnSHD},
it is sometimes possible to do a selection process to get a sequence of distinct selections.
With this in mind, we introduce the following notion.
\begin{definition}
We define
\[\mathrm{CD}_X^\ast = \{ s \in X^{\mathbb N} : (\forall n , m \in \mathbb N)[n \neq m \implies s_n \neq s_m] \wedge \{ s_n : n \in \mathbb N \} \in \mathrm{CD}_X \}.\]
\end{definition}
Note that this is just capturing injective sequences of \(X\) such that their range is closed and relatively discrete in \(X\).
\begin{definition} \label{def:InjDiscSel}
	We will say a space \(X\) is \emph{injectively discretely selective} if \(X \models \vec{\mathsf S}_1(\mathscr T, \mathrm{CD}^\ast)\).
\end{definition}
\begin{remark} \label{remark:ObviousCD}
	Observe that, for any space \(X\),
	\[\vec{\mathsf G}_1(\mathscr T_X, \mathrm{CD}_X^\ast) \leq_{\mathrm{II}}^+ \mathsf G_1(\mathscr T_X, \mathrm{CD}_X).\]
\end{remark}
These two games are, in general, however, not equivalent.
\begin{example}
	Any discrete space \(X\) has the properties that
	\[\mathrm{II} \underset{\mathrm{mark}}{\uparrow} \mathsf G_1(\mathscr T_X, \mathrm{CD}_X) \text{ and }
	\mathrm{I} \underset{\mathrm{pre}}{\uparrow} \vec{\mathsf G}_1(\mathscr T_X, \mathrm{CD}_X^\ast).\]
	In particular, this means that
	\[
		\mathrm{II} \underset{\mathrm{mark}}{\uparrow} \mathsf G_1(\mathscr T_X, \mathrm{CD}_X) \notimplies
		\mathrm{II} \underset{\mathrm{mark}}{\uparrow} \vec{\mathsf G}_1(\mathscr T_X, \mathrm{CD}_X^\ast)
	\]
	and
	\[
		\mathrm{I} \underset{\mathrm{pre}}{\not\uparrow} \mathsf G_1(\mathscr T_X, \mathrm{CD}_X) \notimplies
		\mathrm{I} \underset{\mathrm{pre}}{\not\uparrow} \vec{\mathsf G}_1(\mathscr T_X, \mathrm{CD}_X^\ast).
	\]
	As such, any discrete space is an example of a space which is discretely selective but not injectively discretely selective.
\end{example}

However, when \(X\) is \(T_1\) and every open set is infinite, we obtain equivalence at the level of full-information strategies.
\begin{theorem}
	Let \(X\) be a \(T_1\) space in which every open set is infinite.
	Then
	\[
		\mathrm{II} \uparrow \vec{\mathsf G}_1(\mathscr T_X, \mathrm{CD}_X^\ast)
		\iff \mathrm{II} \uparrow \mathsf G_1(\mathscr T_X, \mathrm{CD}_X)
	\]
	and
	\[
		\mathrm{I} \uparrow \vec{\mathsf G}_1(\mathscr T_X, \mathrm{CD}_X^\ast)
		\iff \mathrm{I} \uparrow \mathsf G_1(\mathscr T_X, \mathrm{CD}_X).
	\]
\end{theorem}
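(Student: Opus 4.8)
The plan is to get each of the two biconditionals by combining an implication that already follows from the general machinery with a direct strategy-transfer argument for its converse. Remark \ref{remark:ObviousCD} gives \(\vec{\mathsf G}_1(\mathscr T_X, \mathrm{CD}_X^\ast) \leq_{\mathrm{II}}^+ \mathsf G_1(\mathscr T_X, \mathrm{CD}_X)\), and unwinding the definition of \(\leq_{\mathrm{II}}\) already yields \(\mathrm{II} \uparrow \vec{\mathsf G}_1(\mathscr T_X, \mathrm{CD}_X^\ast) \implies \mathrm{II} \uparrow \mathsf G_1(\mathscr T_X, \mathrm{CD}_X)\) and, contrapositively, \(\mathrm{I} \uparrow \mathsf G_1(\mathscr T_X, \mathrm{CD}_X) \implies \mathrm{I} \uparrow \vec{\mathsf G}_1(\mathscr T_X, \mathrm{CD}_X^\ast)\). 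So it remains to prove the converses: \(\mathrm{II} \uparrow \mathsf G_1(\mathscr T_X, \mathrm{CD}_X) \implies \mathrm{II} \uparrow \vec{\mathsf G}_1(\mathscr T_X, \mathrm{CD}_X^\ast)\) and \(\mathrm{I} \uparrow \vec{\mathsf G}_1(\mathscr T_X, \mathrm{CD}_X^\ast) \implies \mathrm{I} \uparrow \mathsf G_1(\mathscr T_X, \mathrm{CD}_X)\). The single device powering both is the observation that, since \(X\) is \(T_1\) and every nonempty open set is infinite, for every \(U \in \mathscr T_X\) and every finite \(F \subseteq X\) the set \(U \setminus F\) is again a nonempty (indeed infinite) open set, hence a legal move.

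For the P2 transfer, suppose \(\tau\) is a winning strategy for P2 in \(\mathsf G_1(\mathscr T_X, \mathrm{CD}_X)\). Given a play \(U_1, U_2, \dots\) of P1 in \(\vec{\mathsf G}_1(\mathscr T_X, \mathrm{CD}_X^\ast)\), I would recursively define auxiliary open sets \(V_n = U_n \setminus \{x_1, \dots, x_{n-1}\}\) together with \(x_n = \tau\langle V_1, \dots, V_n\rangle\), and declare P2's transferred strategy to answer \(U_n\) with \(x_n\). By the device above each \(V_n\) is a legal move for P1 in \(\mathsf G_1(\mathscr T_X, \mathrm{CD}_X)\), so \(\langle V_n : n \in \mathbb N\rangle\) is a genuine run against \(\tau\); since \(\tau\) is winning, \(\{x_n : n \in \mathbb N\} \in \mathrm{CD}_X\), and since \(x_n \in V_n\) excludes \(x_1, \dots, x_{n-1}\), the sequence \(\langle x_n : n \in \mathbb N\rangle\) is injective, whence \(\langle x_n : n \in \mathbb N\rangle \in \mathrm{CD}_X^\ast\). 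Everything in the recursion is computed from \(U_1, \dots, U_n\) alone and \(x_n \in V_n \subseteq U_n\), so the transferred map really is a strategy for P2, and it is winning.

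For the P1 transfer, suppose \(\sigma\) is a winning strategy for P1 in \(\vec{\mathsf G}_1(\mathscr T_X, \mathrm{CD}_X^\ast)\). I would define P1's transferred strategy \(\sigma'\) in \(\mathsf G_1(\mathscr T_X, \mathrm{CD}_X)\) by \(\sigma'\langle y_1, \dots, y_{n-1}\rangle = \sigma\langle y_1, \dots, y_{n-1}\rangle \setminus \{y_1, \dots, y_{n-1}\}\) when \(y_1, \dots, y_{n-1}\) are pairwise distinct, and, say, \(\sigma'\langle y_1, \dots, y_{n-1}\rangle = X\) otherwise (a clause that will turn out never to be invoked in legal play). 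Again by the device, \(\sigma'\) produces legal moves. If P2 plays a run \(\langle y_n : n \in \mathbb N\rangle\) consistent with \(\sigma'\), an immediate induction shows the \(y_n\) are pairwise distinct: at each stage \(y_n\) lies in \(\sigma'\langle y_1, \dots, y_{n-1}\rangle\), which by construction misses \(y_1, \dots, y_{n-1}\). Consequently \(y_n \in \sigma\langle y_1, \dots, y_{n-1}\rangle\) for all \(n\), so \(\langle y_n : n \in \mathbb N\rangle\) is a legal run against \(\sigma\); as \(\sigma\) is winning, \(\langle y_n : n \in \mathbb N\rangle \notin \mathrm{CD}_X^\ast\), and since this sequence is injective, that forces \(\{y_n : n \in \mathbb N\} \notin \mathrm{CD}_X\). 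Hence \(\sigma'\) is winning for P1.

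I expect no serious obstacle: the content is entirely in noticing that the hypotheses ``\(T_1\)'' and ``every open set is infinite'' are exactly what make the reduced open sets \(U \setminus F\) legal, which is what lets one force the injectivity needed to pass between \(\mathrm{CD}_X\) and \(\mathrm{CD}_X^\ast\). The only points needing mild care are bookkeeping: confirming the transferred objects are genuine strategies (total functions of the opponent's visible history, with P2's responses always inside the offered set) and noting the ``otherwise'' clause in \(\sigma'\) is harmless since repeated P2 moves cannot occur against \(\sigma'\). One could also attempt to route both transfers through Theorem \ref{thm:TranslationTheorem}, but because the auxiliary open sets depend on P2's earlier responses rather than on P1's moves alone, the direct recursive construction above seems cleaner.
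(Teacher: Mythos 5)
Your proposal is correct and follows essentially the same approach as the paper: both directions are handled by the same device of replacing each open set with that set minus the finitely many previously selected points (legal because \(X\) is \(T_1\) with all open sets infinite), with the easy implications coming from Remark \ref{remark:ObviousCD}. The only cosmetic difference is that you argue the P1 direction directly (winning strategy to winning strategy) while the paper argues the contrapositive (no winning strategy to no winning strategy); the underlying construction is identical.
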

\begin{proof}
	From Remark \ref{remark:ObviousCD}, we have the implications
	\[
		\mathrm{II} \uparrow \vec{\mathsf G}_1(\mathscr T_X, \mathrm{CD}_X^\ast)
		\implies \mathrm{II} \uparrow \mathsf G_1(\mathscr T_X, \mathrm{CD}_X)
	\]
	and
	\[
		\mathrm{I} \not\uparrow \vec{\mathsf G}_1(\mathscr T_X, \mathrm{CD}_X^\ast)
		\implies \mathrm{I} \not\uparrow \mathsf G_1(\mathscr T_X, \mathrm{CD}_X).
	\]
	Hence, to complete the proof, we show the reverse implications.
	
	Suppose P2 has a winning strategy \(\tau\) in \(\mathsf G_1(\mathscr T_X, \mathrm{CD}_X)\).
	We recursively define a strategy \(\tilde\tau\) for P2 in \(\vec{\mathsf G}_1(\mathscr T_X, \mathrm{CD}_X^\ast)\)
	as follows.
	
	For \(n \in \mathbb N\), consider a sequence \(\langle U_\ell : 1 \leq \ell \leq n \rangle\) of open subsets of \(X\).
	Let \(V_1 = U_1\) and \(F_1 = \varnothing\).
	For a fixed \(k < n\), suppose we've defined \(V_\ell\) and \(F_\ell\) for each \(1 \leq \ell \leq k\).
	Let \[F_{k+1} = F_k \cup \left\{\tau(\langle V_\ell : 1 \leq \ell \leq k \rangle)\right\}\] and \(V_{k+1} = U_{k+1} \setminus F_{k+1}\).
	Note that \(V_{k+1}\) is a nonempty open set since \(X\) is \(T_1\) and \(U_{k+1}\) is infinite.
	
	This recursively defines a sequence of open sets \(\langle V_\ell : 1 \leq \ell \leq n \rangle\).
	Then let
	\[\tilde\tau(\langle U_\ell : 1 \leq \ell \leq n \rangle) = \tau(\langle V_\ell : 1 \leq \ell \leq n \rangle).\]
	This defines the strategy \(\tilde\tau\).
	
	To see that \(\tilde\tau\) is winning, let \(\langle U_n : n \in \mathbb N \rangle\) be a sequence of open
	subsets of \(X\) and note that \(\tilde\tau\) selects a sequence \(\langle x_n : n \in \mathbb N \rangle\)
	where \(x_n \in U_n\) for each \(n \in \mathbb N\) and the map \(n \mapsto x_n\), \(\mathbb N \to X\),
	is injective by construction.
	Also, since the sequence \(\langle x_n : n \in \mathbb N \rangle\) was chosen according to \(\tau\),
	\(\{ x_n : n \in \mathbb N \}\) is closed and relatively discrete.
	
	Now suppose that P1 does not have any winning strategy in the closed discrete selection game on \(X\) and let \(\sigma\)
	be a strategy for P1 in \(\vec{\mathsf G}_1(\mathscr T_X, \mathrm{CD}_X^\ast)\).
	We define a strategy \(\tilde\sigma\) in the closed discrete selection game on \(X\) in the following way.
	First, we define \(U_1 = \tilde\sigma(\varnothing) = \sigma(\varnothing)\).
	For \(n \in \mathbb N\), suppose we have defined \(\langle U_\ell : 1 \leq \ell \leq n\rangle\)
	and \(\langle x_\ell : 1 \leq \ell < n \rangle\).
	For any \(x_n \in U_n\), let
	\[U_{n+1} = \tilde\sigma(\langle x_\ell : 1 \leq \ell \leq n \rangle) 
	= \sigma(\langle x_\ell : 1 \leq \ell \leq n \rangle) \setminus \{ x_\ell : 1 \leq \ell \leq n \}.\]
	This recursively defines \(\tilde\sigma\).
	
	By hypothesis, \(\tilde\sigma\) is not a winning strategy, so there exists
	\(\langle x_n : n \in \mathbb N \rangle\) where \(x_n \in \tilde\sigma(\langle x_\ell : 1 \leq \ell < n \rangle)\) for every \(n \in \mathbb N\)
	with the additional property that \(\{ x_n : n \in \mathbb N \} \in \mathrm{CD}_X\).
	Since the mapping \(n \mapsto x_n\), \(\mathbb N \to X\), is an injection,
	we see that \(\langle x_n : n \in \mathbb N \rangle \in \mathrm{CD}^\ast_X\).
	That is, \(\sigma\) is not a winning strategy for P1 in \(\vec{\mathsf G}_1(\mathscr T_X, \mathrm{CD}_X^\ast)\).
\end{proof}

The next lemma effectively asserts that, if you have a countable closed and relatively discrete subset of \(X\) and you group together finitely many
elements at a time, then you form a countable closed discrete subset of \(\mathcal P_{\mathrm{fin}}(X)\).
\begin{lemma} \label{lem:GroupCD}
	Suppose \(s \in X^{\mathbb N}\) is such that \(\{ s_n : n \in \mathbb N \} \in \mathrm{CD}_X\)
	and suppose \(\phi \in \mathbb S\).
	Let \(F_1 = \{ s_j : 1 \leq j \leq \phi(1) \}\) and, for \(n \in \mathbb N\), \(F_{n+1} =  \{ s_j : \phi(n) < j \leq \phi(n+1) \}\).
	Then \(\{ F_n : n \in \mathbb N \} \in \mathrm{CD}_{\mathcal P_{\mathrm{fin}}(X)}\).
\end{lemma}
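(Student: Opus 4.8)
The plan is to isolate one separation lemma and apply it twice. First I would set $A = \{s_n : n \in \mathbb N\}$; by hypothesis $A \in \mathrm{CD}_X$, and since $\phi$ is strictly increasing the index blocks $[1,\phi(1)], (\phi(1),\phi(2)], \ldots$ partition $\mathbb N$, so each $F_n$ is a nonempty finite subset of $A$ (that $\bigcup_n F_n = A$ is also true but will not be needed). Recalling that the sets $[W_1,\ldots,W_r]$ with each $W_i \in \mathscr T_X$ form a basis for $\mathcal P_{\mathrm{fin}}(X)$, I would prove the following: for every $G \in \mathcal P_{\mathrm{fin}}(X)$ there is a basic Vietoris neighborhood $V$ of $G$ such that, for every $k \in \mathbb N$, $F_k \in V$ implies $F_k = G$.

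Granting this claim, the lemma is immediate. Applying it to any $G$ with $G \neq F_n$ for all $n$ yields a neighborhood of $G$ disjoint from $\{F_n : n \in \mathbb N\}$, so that set is closed; applying it to $G = F_m$ yields a neighborhood $V$ of $F_m$ with $V \cap \{F_n : n \in \mathbb N\} = \{F_m\}$, so the set is relatively discrete. Hence $\{F_n : n \in \mathbb N\} \in \mathrm{CD}_{\mathcal P_{\mathrm{fin}}(X)}$.

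To prove the claim, I would enumerate the distinct points of $G$ as $y_1,\ldots,y_r$ and, for each $i$, choose $W_i \in \mathscr T_X$ as follows: if $y_i \in A$, use relative discreteness of $A$ to get $W_i$ with $W_i \cap A = \{y_i\}$; if $y_i \notin A$, take $W_i = X \setminus A$, which is open since $A$ is closed and contains $y_i$. Then $G$ lies in $V := [W_1,\ldots,W_r]$. If $F_k \in V$, then since $F_k \subseteq A$ is disjoint from $X \setminus A$ but must meet each $W_i$, no $W_i$ equals $X \setminus A$; thus in fact $y_i \in A$ and $W_i \cap A = \{y_i\}$ for every $i$. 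Consequently $F_k \subseteq A \cap \bigcup_i W_i = \{y_1,\ldots,y_r\} = G$, while $\varnothing \neq F_k \cap W_i \subseteq A \cap W_i = \{y_i\}$ forces each $y_i \in F_k$, giving $G \subseteq F_k$ and thus $F_k = G$, as claimed.

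I do not expect a genuine obstacle; the proof is essentially bookkeeping. The one subtlety worth flagging is that the full strength of $A \in \mathrm{CD}_X$ is used, in two distinct roles: closedness of $A$ is what makes $X \setminus A$ available as one of the open sets in the Vietoris tuple (handling points of $G$ outside $A$), while relative discreteness of $A$ is what lets a single basic neighborhood pin a finite subset of $A$ down to equality. A minor point to verify is merely that $[W_1,\ldots,W_r]$ is a legitimate basic open set, i.e., that it is built from finitely many nonempty open sets.
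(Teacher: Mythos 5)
Your proof is correct and follows essentially the same approach as the paper: closedness of \(\{s_n : n \in \mathbb N\}\) supplies \(X \setminus \{s_n : n \in \mathbb N\}\) as a Vietoris coordinate for points outside that set, and relative discreteness supplies isolating coordinates that pin a finite subset of \(\{s_n : n \in \mathbb N\}\) down to equality. The only difference is organizational: you package the closedness cases and the relative-discreteness verification into one uniform separation claim, whereas the paper treats them separately, but the underlying argument is identical.
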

\begin{proof}
	We start by showing \(\{ F_n : n \in \mathbb N \}\) is a closed subset of \(\mathcal P_{\mathrm{fin}}(X)\).
	Consider \(F \not\in \{ F_n : n \in \mathbb N \}\).
	We proceed by cases.
	
	If \(F \setminus \{ s_n : n \in \mathbb N \} \neq \varnothing\), fix \(x \in F \setminus \{ s_n : n \in \mathbb N \}\).
	Since \(\{s_n : n \in \mathbb N\}\) is closed in \(X\), \(U := X \setminus \{s_n:n\in\mathbb N\}\) is an open set
	and \(x \in U\).
	Note then that
	\[F \in [U,X] \subseteq \mathcal P_{\mathrm{fin}}(X) \setminus \{ F_n : n \in \mathbb N \}.\]
	
	If \(F \subseteq \{ s_n : n \in \mathbb N \}\),
	for each \(x \in F\), we can find an open set \(U_x\) of \(X\)
	with \(U_x \cap \{ s_n : n \in \mathbb N \} = \{x\}\) since \(\{ s_n : n \in \mathbb N \}\) is relatively discrete.
	Now note that
	\[F \in [U_x : x \in F] \subseteq \mathcal P_{\mathrm{fin}}(X) \setminus \{ F_n : n \in \mathbb N \}.\]
	
	Hence, \(\{F_n:n\in\mathbb N\}\) is closed in \(\mathcal P_{\mathrm{fin}}(X)\).
	
	We finish by showing that \(\{F_n:n\in\mathbb N\}\) is relatively discrete.
	By the relative discreteness of \(\{s_n:n\in\mathbb N\}\), we can choose an open set \(U_n\) of \(X\)
	with \(U_n \cap \{s_n:n\in\mathbb N\} = \{s_n\}\) for each \(n\in\mathbb N\).
	Note then that
	\[[U_j : 1 \leq j \leq \phi(1)] \cap \{ F_k : k \in \mathbb N \} = \{ F_1 \}\]
	and, for each \(n \in \mathbb N\),
	\[[U_j : \phi(n) < j \leq \phi(n+1)] \cap \{ F_k : k \in \mathbb N \} = \{ F_{n+1} \}.\] \qedhere
\end{proof}

We now offer an equivalent game to the closed discrete game on \(X\) in terms of \(\mathcal P_{\mathrm{fin}}(X)\).
\begin{theorem} \label{thm:PfinCD}
	For a space \(X\), let \[\mathrm{SCD}_{\mathcal P_{\mathrm{fin}}(X)}
	= \left\{ \mathbf F \in \mathrm{CD}_{\mathcal P_{\mathrm{fin}}(X)} : \bigcup \mathbf F \in \mathrm{CD}_X \right\}.\]	
	Then \[\mathsf G_1(\mathscr T_X, \mathrm{CD}_X) \rightleftarrows \mathsf G_1(\mathscr T_{\mathcal P_{\mathrm{fin}}(X)} , \mathrm{SCD}_{\mathcal P_{\mathrm{fin}}(X)}).\]
	Consequently, for any space \(X\),
	\begin{equation} \label{eqn:CDLEQ}
		\mathsf G_1(\mathscr T_X, \mathrm{CD}_X)
		\leq^+_{\mathrm{II}} \mathsf G_1(\mathscr T_{\mathcal P_{\mathrm{fin}}(X)} , \mathrm{CD}_{\mathcal P_{\mathrm{fin}}(X)})
	\end{equation}
	and
	\begin{equation} \label{eqn:ICDLEQ}
		\vec{\mathsf G}_1(\mathscr T_X, \mathrm{CD}^\ast_X)
		\leq^+_{\mathrm{II}} \vec{\mathsf G}_1(\mathscr T_{\mathcal P_{\mathrm{fin}}(X)} , \mathrm{CD}^\ast_{\mathcal P_{\mathrm{fin}}(X)}).
	\end{equation}
\end{theorem}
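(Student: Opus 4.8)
The plan is to derive the stated \(\rightleftarrows\) from the two inequalities \(\mathsf G_1(\mathscr T_X,\mathrm{CD}_X) \leq^+_{\mathrm{II}} \mathsf G_1(\mathscr T_{\mathcal P_{\mathrm{fin}}(X)}, \mathrm{SCD}_{\mathcal P_{\mathrm{fin}}(X)})\) and \(\mathsf G_1(\mathscr T_{\mathcal P_{\mathrm{fin}}(X)}, \mathrm{SCD}_{\mathcal P_{\mathrm{fin}}(X)}) \leq^+_{\mathrm{II}} \mathsf G_1(\mathscr T_X,\mathrm{CD}_X)\), and then to read off the two displayed consequences. First I would record three routine observations, none requiring a separation axiom. (i) \(\mathrm{CD}_X\) is closed under taking subsets: any point of the closure of \(B\subseteq A\in\mathrm{CD}_X\) lies in the closed set \(A\), hence is isolated in \(A\) and cannot be a limit of \(B\), and relative discreteness is inherited. (ii) A finite union of members of \(\mathrm{CD}_X\) is again a member: the union is closed, and an isolating neighborhood of a point \(x\) of the union is obtained by intersecting, over the finitely many summands, an open set around \(x\) that isolates \(x\) in that summand or misses it. (iii) The proof of Lemma~\ref{lem:GroupCD} in fact shows that if \(D\in\mathrm{CD}_X\) and \(D=\bigcup_n F_n\) with each \(F_n\) a nonempty finite set, then \(\{F_n:n\in\mathbb N\}\in\mathrm{CD}_{\mathcal P_{\mathrm{fin}}(X)}\); consequently, for a countable family \(\mathbf F\) of nonempty finite subsets of \(X\), membership \(\mathbf F\in\mathrm{SCD}_{\mathcal P_{\mathrm{fin}}(X)}\) is equivalent to \(\bigcup\mathbf F\in\mathrm{CD}_X\), so in the hyperspace game P2's winning condition collapses to ``\(\bigcup_n F_n\in\mathrm{CD}_X\)''.

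For \(\mathsf G_1(\mathscr T_{\mathcal P_{\mathrm{fin}}(X)}, \mathrm{SCD}_{\mathcal P_{\mathrm{fin}}(X)}) \leq^+_{\mathrm{II}} \mathsf G_1(\mathscr T_X, \mathrm{CD}_X)\) I would apply Theorem~\ref{thm:TranslationTheorem} directly, reading \(\mathrm{SCD}_{\mathcal P_{\mathrm{fin}}(X)}\) and \(\mathrm{CD}_X\) as the sets of sequences whose ranges lie in those collections (so the sequential and single-selection games coincide). Fix a selector \(c\) with \(c(F)\in F\) for each nonempty finite \(F\subseteq X\), and put \(\overleftarrow{\mathsf T}_{\mathrm{I},n}(U)=[U]\), the (basic open, \(n\)-independent) set of nonempty finite subsets of \(U\), and \(\overrightarrow{\mathsf T}_{\mathrm{II},n}(F,U)=c(F)\). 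Then \(F\in[U]\) gives \(c(F)\in F\subseteq U\), and if \(F_n\in[U_n]\) for all \(n\) with \(\langle F_n:n\rangle\in\mathrm{SCD}_{\mathcal P_{\mathrm{fin}}(X)}\), then \(\{c(F_n):n\}\subseteq\bigcup_nF_n\in\mathrm{CD}_X\), so \(\{c(F_n):n\}\in\mathrm{CD}_X\) by (i). The hypotheses of Theorem~\ref{thm:TranslationTheorem}, including the \(n\)-independence clause that forces \(\leq^+_{\mathrm{II}}\), are satisfied.

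The inequality \(\mathsf G_1(\mathscr T_X,\mathrm{CD}_X) \leq^+_{\mathrm{II}} \mathsf G_1(\mathscr T_{\mathcal P_{\mathrm{fin}}(X)}, \mathrm{SCD}_{\mathcal P_{\mathrm{fin}}(X)})\) is the substantive direction, and no round-by-round translation can work here: P1 in the hyperspace game can play \(\mathcal U\supseteq[U_1,\dots,U_k]\) with \(k\geq 2\) pairwise disjoint coordinates, forcing P2 to place two points in one round that the one-point-per-round game on \(X\) cannot both track. I would instead argue by a grouping simulation. Fix a choice function sending each \(\mathcal U\in\mathscr T_{\mathcal P_{\mathrm{fin}}(X)}\) to a basic open \([U^{\mathcal U}_1,\dots,U^{\mathcal U}_{k(\mathcal U)}]\subseteq\mathcal U\), and match a hyperspace play with P1-moves \(\mathcal U_1,\mathcal U_2,\dots\) to a play of the game on \(X\) with P1-moves \(U^{\mathcal U_1}_1,\dots,U^{\mathcal U_1}_{k(\mathcal U_1)},U^{\mathcal U_2}_1,\dots\); picking one point per listed coordinate assembles, block by block, into finite sets \(F_n\in[U^{\mathcal U_n}_1,\dots,U^{\mathcal U_n}_{k(\mathcal U_n)}]\subseteq\mathcal U_n\) whose union is exactly the set selected in the game on \(X\), which by (iii) puts \(\langle F_n:n\rangle\) into \(\mathrm{SCD}_{\mathcal P_{\mathrm{fin}}(X)}\) as soon as that union is in \(\mathrm{CD}_X\). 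Through this dictionary a winning full-information P2-strategy transfers immediately; for a winning Markov strategy \(\tau\) one fixes an injection \(q:\mathbb N\times\mathbb N\to\mathbb N\) and responds to \((\mathcal U,n)\) with \(\{\tau(U^{\mathcal U}_j,q(n,j)):1\leq j\leq k(\mathcal U)\}\), which depends only on \(\mathcal U\) and \(n\), the selected union being a subset of the range of an ordinary \(\tau\)-play and hence in \(\mathrm{CD}_X\) by (i); a winning P1-strategy in the hyperspace game simulates into one for the game on \(X\), whose plays, since P1 is assumed not to win there, have union in \(\mathrm{CD}_X\), forcing the simulated hyperspace play into \(\mathrm{SCD}_{\mathcal P_{\mathrm{fin}}(X)}\) and contradicting that P1 won it; and the \(\mathrm{I}\underset{\mathrm{pre}}{\not\uparrow}\) clause is handled identically with a predetermined sequence of P1-moves, via Remark~\ref{remark:LindelofAndSelection}. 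The one genuinely separate clause is the constant one: by Remark~\ref{remark:LindelofAndSelection} it is enough, given \(\mathcal U\), to find a countable \(\mathbf B\subseteq\mathcal U\) with \(\bigcup\mathbf B\in\mathrm{CD}_X\); applying the ``single set'' property of the game on \(X\) to each \(U^{\mathcal U}_j\) and combining the results via (ii) yields a countable \(D\in\mathrm{CD}_X\) with \(D\subseteq\bigcup_jU^{\mathcal U}_j\) and \(D\cap U^{\mathcal U}_j\neq\varnothing\) for all \(j\); then, fixing \(e_j\in D\cap U^{\mathcal U}_j\), the family \(\mathbf B=\{\{e_1,\dots,e_{k(\mathcal U)}\}\cup\{d\}:d\in D\}\) works, as \(\mathbf B\subseteq[U^{\mathcal U}_1,\dots,U^{\mathcal U}_{k(\mathcal U)}]\subseteq\mathcal U\) and \(\bigcup\mathbf B=D\). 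I expect this constant-strategy bookkeeping, together with keeping the Markov transfer genuinely Markov, to be the main obstacle; everything else is routine bookkeeping around Lemma~\ref{lem:GroupCD}.

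Finally, \eqref{eqn:CDLEQ} follows because \(\mathrm{SCD}_{\mathcal P_{\mathrm{fin}}(X)}\subseteq\mathrm{CD}_{\mathcal P_{\mathrm{fin}}(X)}\) gives \(\mathsf G_1(\mathscr T_{\mathcal P_{\mathrm{fin}}(X)},\mathrm{SCD}_{\mathcal P_{\mathrm{fin}}(X)})\leq^+_{\mathrm{II}}\mathsf G_1(\mathscr T_{\mathcal P_{\mathrm{fin}}(X)},\mathrm{CD}_{\mathcal P_{\mathrm{fin}}(X)})\) via identity translations, which one composes with the \(\rightleftarrows\) just proved, using transitivity of \(\leq^+_{\mathrm{II}}\); and for \eqref{eqn:ICDLEQ} one reruns the grouping simulation of the substantive direction and notes that it preserves injectivity, since an injective play on \(X\) produces pairwise disjoint, hence pairwise distinct, blocks \(F_n\), so the hyperspace play lies in \(\mathrm{CD}^\ast_{\mathcal P_{\mathrm{fin}}(X)}\); thus the same translation data witness \(\vec{\mathsf G}_1(\mathscr T_X,\mathrm{CD}_X^\ast)\leq^+_{\mathrm{II}}\vec{\mathsf G}_1(\mathscr T_{\mathcal P_{\mathrm{fin}}(X)},\mathrm{CD}^\ast_{\mathcal P_{\mathrm{fin}}(X)})\).
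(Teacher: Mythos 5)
Your proposal is correct and, in its overall architecture, matches the paper's proof: the same two inequalities \(\mathsf G_1(\mathscr T_X,\mathrm{CD}_X)\leq^+_{\mathrm{II}}\mathsf G_1(\mathscr T_{\mathcal P_{\mathrm{fin}}(X)},\mathrm{SCD}_{\mathcal P_{\mathrm{fin}}(X)})\leq^+_{\mathrm{II}}\mathsf G_1(\mathscr T_X,\mathrm{CD}_X)\), the same block-by-block grouping simulation with a pairing function for the Markov clause, the same appeal to Lemma \ref{lem:GroupCD}, and essentially the same \(U\mapsto[U]\) translation for the reverse direction (the paper routes this through \(\mathsf G_{\mathrm{fin}}(\mathscr T_X,\mathrm{CD}_X)\) and Proposition \ref{prop:CDSingleFiniteSame}, while you use a one-point selector \(c(F)\in F\) together with downward closure of \(\mathrm{CD}_X\); these are interchangeable). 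The one place where you genuinely diverge is the constant-strategy clause. The paper proves it by induction on the length \(k\) of the chosen basic neighborhood \([U_1,\ldots,U_{k+1}]\), splitting into cases according to whether \(U_{k+1}\) meets some earlier \(U_j\), and in the disjoint case uses that disjointness to separate the two closed discrete pieces. You instead observe that (a) a finite union of members of \(\mathrm{CD}_X\) is again in \(\mathrm{CD}_X\) (your (ii), which is correct and makes the paper's disjointness case-split unnecessary), and (b) the proof of Lemma \ref{lem:GroupCD} actually shows that \emph{any} countable family of nonempty finite sets whose union lies in \(\mathrm{CD}_X\) is automatically in \(\mathrm{SCD}_{\mathcal P_{\mathrm{fin}}(X)}\) (your (iii), also correct: the closedness argument in that lemma never uses the block structure, and the discreteness argument works verbatim with \([U_x:x\in F]\) for \(U_x\) isolating \(x\) in the union). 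With these, your direct construction \(\mathbf B=\{\{e_1,\ldots,e_{k}\}\cup\{d\}:d\in D\}\) with \(D=\bigcup_j D_j\) settles the constant clause without induction. This is shorter and arguably cleaner than the paper's treatment; what the paper's induction buys is that it never needs to isolate observations (ii) and (iii) as standalone facts. Everything else, including the injectivity bookkeeping for \eqref{eqn:ICDLEQ} and the derivation of \eqref{eqn:CDLEQ} from \(\mathrm{SCD}_{\mathcal P_{\mathrm{fin}}(X)}\subseteq\mathrm{CD}_{\mathcal P_{\mathrm{fin}}(X)}\), is as in the paper.
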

\begin{proof}
	We start by showing that
	\begin{equation} \label{eqn:MainInequality}
		\mathsf G_1(\mathscr T_X, \mathrm{CD}_X) \leq_{\mathrm{II}}^+ \mathsf G_1(\mathscr T_{\mathcal P_{\mathrm{fin}}(X)} , \mathrm{SCD}_{\mathcal P_{\mathrm{fin}}(X)}).
	\end{equation}
	Since \(\mathrm{SCD}_{\mathcal P_{\mathrm{fin}}(X)} \subseteq \mathrm{CD}_{\mathcal P_{\mathrm{fin}}(X)}\),
	this will establish \eqref{eqn:CDLEQ}.
	For \eqref{eqn:ICDLEQ}, simply note that, throughout the proof of \eqref{eqn:MainInequality},
	if the selection in the closed discrete selection game on \(X\) can be chosen to form an injective function,
	then the grouped selection for \(\mathsf G_1(\mathscr T_{\mathcal P_{\mathrm{fin}}(X)} , \mathrm{SCD}_{\mathcal P_{\mathrm{fin}}(X)})\)
	can also be taken to form an injective function.
	
	Assume that P2 has a winning Markov strategy \(\tau : \mathscr T_X \times \mathbb N \to X\) in the closed discrete selection game on \(X\).
	Fix a bijection \(\beta : \mathbb N^2 \to \mathbb N\) and also a choice of basic neighborhood (in a fixed order)
	to be contained in any open subset of \(\mathcal P_{\mathrm{fin}}(X)\).
	We define \(\tilde\tau : \mathscr T_{\mathcal P_{\mathrm{fin}}(X)} \times \mathbb N \to \mathcal P_{\mathrm{fin}}(X)\) as follows.
	Consider \(W \in \mathscr T_{\mathcal P_{\mathrm{fin}}(X)}\), \(n \in \mathbb N\),
	and the chosen subset \([U_{1} , \ldots , U_{m}]\) of \(W\).
	Then define
	\[\tilde\tau(W,n) = \{ \tau(U_{j},\beta(n,j)) : 1 \leq j \leq m \}.\]
	Note that \(\tilde\tau(W,n) \in W\).
	
	Now we need to show that \(\tilde\tau\) is winning for P2 in the modified closed discrete selection game on \(\mathcal P_{\mathrm{fin}}(X)\).
	So consider a sequence \(\langle W_n : n \in \mathbb N \rangle\) of open subsets of \(\mathcal P_{\mathrm{fin}}(X)\) and let
	\([U_{n,1}, \ldots, U_{n,m_n}]\) represent the chosen basic neighborhood contained in \(W_n\).
	For every \(k > m_n\), let \(U_{n,k} = X\) and consider
	\[\{ \tau(U_{n,k} , \beta(n,k)) : n,k \in \mathbb N \}.\]
	Since \(\tau\) is winning for P2 in the closed discrete selection game on \(X\),
	\[\{ \tau(U_{n,k} , \beta(n,k)) : n,k \in \mathbb N \} \in \mathrm{CD}_X.\]
	Since any subset of a closed and relatively discrete set is closed and relatively discrete, note that
	\[\{ \tau(U_{n,k} , \beta(n,k)) : n \in \mathbb N, 1 \leq k \leq m_n \} \in \mathrm{CD}_X.\]
	Then, by Lemma \ref{lem:GroupCD}, \(\{ \tilde\tau(W_n,n) : n \in \mathbb N \} \in \mathrm{CD}_{\mathcal P_{\mathrm{fin}}(X)}\).
	Moreover, since \[\bigcup \{ \tilde\tau(W_n,n) : n \in \mathbb N \} = \{ \tau(U_{n,k} , \beta(n,k)) : n \in \mathbb N, 1 \leq k \leq m_n \},\]
	we see that \(\{ \tilde\tau(W_n,n) : n \in \mathbb N \} \in \mathrm{SCD}_{\mathcal P_{\mathrm{fin}}(X)}\).
	
	For the remaining implications, excluding the case for constant strategies,
	we continue with the fixed basic neighborhood assignment to each open subset of
	\(\mathcal P_{\mathrm{fin}}(X)\), and assume without loss of generality that P1 is just playing those basic neighborhoods.
	We translate an initial play
	\[\langle [U_{1,1}, \ldots, U_{1,m_1}] , [U_{2,1}, \ldots, U_{2,m_2}] , \ldots, [U_{n,1} , \ldots, U_{n,m_n}] \rangle\]
	by P1 in the modified closed discrete selection game on \(\mathcal P_{\mathrm{fin}}(X)\) to an initial play
	\[\langle U_{1,1} , \ldots, U_{1,m_1}, U_{2,1} , \ldots, U_{2,m_2} , \ldots , U_{n,1} , \ldots , U_{n,m_n} \rangle\]
	by P1 in the closed discrete selection game on \(X\).
	As P2 makes choices \(x_{j,k} \in U_{j,k}\), we translate these back to the game on \(\mathcal P_{\mathrm{fin}}(X)\)
	with \(F_j = \{ x_{j,k} : 1 \leq k \leq m_j \}\).
	
	If P2 has a winning strategy in \(\mathsf G_1(\mathscr T_X, \mathrm{CD}_X)\), then this translation scheme generates
	a winning strategy for P2 in \(\mathsf G_1(\mathscr T_{\mathcal P_{\mathrm{fin}}(X)} , \mathrm{SCD}_{\mathcal P_{\mathrm{fin}}(X)})\)
	by Lemma \ref{lem:GroupCD} and the observation that
	\(\bigcup_{j \in \mathbb N} F_j \in \mathrm{CD}_X\).
	
	If P1 fails to have a winning strategy in the closed discrete selection game on \(X\), consider a strategy \(\sigma\)
	for P1 in the modified closed discrete selection game on \(\mathcal P_{\mathrm{fin}}(X)\).
	This translation scheme generates a strategy \(\tilde\sigma\) for P1 in \(\mathsf G_1(\mathscr T_X, \mathrm{CD}_X)\).
	Since \(\tilde\sigma\) is not winning, there exists a selection that beats \(\tilde\sigma\), which, by Lemma \ref{lem:GroupCD},
	beats \(\sigma\).
	So P1 has no winning strategy in \(\mathsf G_1(\mathscr T_{\mathcal P_{\mathrm{fin}}(X)} , \mathrm{SCD}_{\mathcal P_{\mathrm{fin}}(X)})\).
	
	In the case that P1 doesn't have a pre-determined winning strategy in \(\mathsf G_1(\mathscr T_X, \mathrm{CD}_X)\), the fact that
	P1 also doesn't have a pre-determined winning strategy in \(\mathsf G_1(\mathscr T_{\mathcal P_{\mathrm{fin}}(X)} , \mathrm{SCD}_{\mathcal P_{\mathrm{fin}}(X)})\)
	follows in essentially the same way as it did above for the perfect-information strategy case.
	
	We now show that, if P1 doesn't have a constant winning strategy in \(\mathsf G_1(\mathscr T_X, \mathrm{CD}_X)\),
	then P1 doesn't have a constant winning strategy in \(\mathsf G_1(\mathscr T_{\mathcal P_{\mathrm{fin}}(X)} , \mathrm{SCD}_{\mathcal P_{\mathrm{fin}}(X)})\).
	We prove this by induction on the length of basic neighborhoods.

	First, suppose we are given \([U]\).
	Then we can pick \(\{x_n : n \in \mathbb N\} \subseteq U\) which is closed and relatively discrete in \(X\).
	Then \(\{ \{x_n\} : n \in \mathbb N \} \subseteq [U]\) can be seen to be a member of \(\mathrm{SCD}_{\mathcal P_{\mathrm{fin}}(X)}\)
	by Lemma \ref{lem:GroupCD}.
	
	For \(k \in \mathbb N\), suppose we have shown that, given any \([U_1, \ldots, U_k]\), we can find \(\{F_n:n\in\mathbb N\} \subseteq [U_1,\ldots, U_k]\)
	such that \(\{ F_n : n \in \mathbb N \} \in \mathrm{SCD}_{\mathcal P_{\mathrm{fin}}(X)}\).
	Consider \([U_1, \ldots, U_{k+1}]\).
	We proceed by cases.
	
	Suppose there is some \(j = 1, \dots, k\) such that \(U_{k+1} \cap U_j \neq \varnothing\).
	For each \(m = 1 , \ldots, k\), let \[W_m = \begin{cases} U_m, & m \neq j \\ U_{k+1} \cap U_m , & m = j \end{cases}.\]
	Note that
	\[[W_1, \ldots,  W_k] \subseteq [U_1, \ldots, U_{k+1}]\]
	and we can apply the inductive hypothesis on \([W_1, \ldots,  W_k]\), which finishes this case.
	
	Otherwise, suppose that \(U_{k+1} \cap U_j = \varnothing\) for each \(j = 1 , \ldots, k\).
	Apply the inductive hypothesis to produce \(\{G_n:n\in\mathbb N\} \subseteq [U_1, \ldots, U_k]\)
	such that \(\{G_n:n\in\mathbb N\} \in \mathrm{SCD}_{\mathcal P_{\mathrm{fin}}(X)}\).
	We can also apply the assumption that P1 doesn't have a constant winning strategy in \(\mathsf G_1(\mathscr T_X, \mathrm{CD}_X)\)
	to produce \(\{x_n:n\in\mathbb N\} \subseteq [U_{k+1}]\) such that \(\{x_n:n \in \mathbb N\} \in \mathrm{CD}_X\).
	Let \(F_n = G_n \cup \{x_n\}\) and note that \(F_n \in [U_1,\ldots, U_{k+1}]\) for each \(n \in \mathbb N\).
	We claim that \(\{F_n : n \in \mathbb N \} \in \mathrm{SCD}_{\mathcal P_{\mathrm{fin}}(X)}\).
	
	Note that, since \(\bigcup_{n\in\mathbb N} G_n \in \mathrm{CD}_X\) and \(\{x_n : n \in \mathbb N\} \in \mathrm{CD}_X\),
	\[\mathbf F := \bigcup_{n\in\mathbb N} F_n = \{x_n:n\in\mathbb N\} \cup \bigcup_{n\in\mathbb N} G_n\]
	is a closed subset of \(X\).
	For relative discreteness of \(\mathbf F\), let \(y \in \mathbf F\).
	
	If \(y \in \bigcup_{n\in\mathbb N} G_n\), we can find an open set \(V\) such that \(V \cap \bigcup_{n\in\mathbb N} G_n = \{y\}\).
	Note that \(y \in \tilde U := \bigcup_{m=1}^k U_m\) and that \(\{ x_n : n \in \mathbb N \} \cap \tilde U = \varnothing\).
	Hence, \(\left( V \cap \tilde U \right) \cap \mathbf F = \{y\}\).
	
	Otherwise, \(y \in \{x_n:n\in\mathbb N\}\).
	Then we can find an open set \(V\) such that \(V \cap \{x_n:n\in\mathbb N\} = \{y\}\).
	In a similar fashion as above, \(\left( V \cap U_{k+1} \right) \cap \mathbf F = \{y\}\).
	
	Consequently, as \(\mathbf F \in \mathrm{CD}_X\), Lemma \ref{lem:GroupCD} can be seen to guarantee that
	\(\{ F_n : n \in \mathbb N \} \in \mathrm{SCD}_{\mathcal P_{\mathrm{fin}}(X)}\).
	This concludes our demonstration of
	\eqref{eqn:MainInequality}.
	
	To finish the proof, we show that
	\[\mathsf G_1(\mathscr T_{\mathcal P_{\mathrm{fin}}(X)} , \mathrm{SCD}_{\mathcal P_{\mathrm{fin}}(X)})
	\leq_{\mathrm{II}}^+ \mathsf G_1(\mathscr T_X, \mathrm{CD}_X).\]
	This will follow, via Proposition \ref{prop:CDSingleFiniteSame}, once we establish that
	\[\mathsf G_1(\mathscr T_{\mathcal P_{\mathrm{fin}}(X)} , \mathrm{SCD}_{\mathcal P_{\mathrm{fin}}(X)})
	\leq_{\mathrm{II}}^+ \mathsf G_{\mathrm{fin}}(\mathscr T_X, \mathrm{CD}_X).\]
	Indeed, the mapping \(U \mapsto [U]\), \(\mathscr T_X \to \mathscr T_{\mathcal P_{\mathrm{fin}}(X)}\),
	easily guarantees that
	\[\mathsf G_1(\mathscr T_{\mathcal P_{\mathrm{fin}}(X)} , \mathrm{SCD}_{\mathcal P_{\mathrm{fin}}(X)})
	\leq_{\mathrm{II}}^+ \mathsf G_{\mathrm{fin}}(\mathscr T_X, \mathrm{CD}_X)\]
	since any corresponding selection \(\{F_n : n \in \mathbb N\} \in \mathrm{SCD}_{\mathcal P_{\mathrm{fin}}(X)}\)
	has the property that \(\bigcup \{ F_n : n \in \mathbb N \} \in \mathrm{CD}_X\).
\end{proof}
\begin{question} \label{question:PfinDS}
	Is it possible that, for all spaces \(X\),
	\[\mathsf G_1(\mathscr T_X, \mathrm{CD}_X) \rightleftarrows \mathsf G_1(\mathscr T_{\mathcal P_{\mathrm{fin}}(X)} , \mathrm{CD}_{\mathcal P_{\mathrm{fin}}(X)})?\]
\end{question}

One would hope that the techniques for proving Theorem \ref{thm:PfinCD} could be adapted to show an analogous result
for \(\mathcal F[X]\).
However, P1 can force P2 to contain particular finite sets in their selections, so P2 cannot control the properties of their selections
in the same way.
Alternatively, one may hope that simply appending select points, as in the proof of \cite[Theorem 2.6]{BellaSpadaro}, may work.
However, as we illustrate below, without greater care, such an approach cannot necessarily be applied.
For this, we consider two basic scenarios relative to \(\mathbb R\).

For each \(n \in \mathbb N\), let \(F_n = \left\{ \frac{1}{n} \right\}\) and consider the basic neighborhood \([F_n, (-1,1)]\) of
\(F_n\) in \(\mathcal F[X]\).
For \(n \in \mathbb N\), let \(x_n = 0\) and \(G_n = F_n \cup \{x_n\}\).
Note that \(G_n \in [F_n, (-1,1)]\) for each \(n \in \mathbb N\) and that \(\{x_n:n\in\mathbb N\} \in \mathrm{CD}_{\mathbb R}\).
However, \[\{0\} \in \left[\mathrm{cl}_{\mathcal F[X]} \bigcup_{n\in\mathbb N} G_n \right] \setminus \bigcup_{n\in\mathbb N} G_n.\]

One may suspect that the only issue above is the convergence of the sequence \(\left\langle \frac{1}{n} : n \in \mathbb N \right\rangle\) to \(0\),
and so simply avoiding any potential limit points of the \(F_n\) may remedy the situation.
However, this also doesn't necessarily work.
Indeed, define \(F_n = \left\{ 0, \frac{1}{n} \right\}\) and consider \([F_n , (-2,1)]\) for each \(n \in \mathbb N\).
Then we can let \(x_n = -1\) for each \(n \in \mathbb N\) and form \(G_n = F_n \cup \{-1\}\).
Note that \(-1\) is not a limit point of any selection from the \(F_n\).
Also note that \(\{x_n:n\in\mathbb N\} \in \mathrm{CD}_{\mathbb R}\).
Nevertheless, \(\langle G_n : n \in \mathbb N \rangle\) converges to \(\{-1,0\}\) in \(\mathcal F[\mathbb R]\)
and \(\{-1,0\} \not\in \{G_n : n \in \mathbb N\}\).

We note that neither \(\mathbb R\) nor \(\mathcal F[\mathbb R]\) are discretely selective, so the above considerations aren't counterexamples to the conjecture
that \(\mathcal F[X]\) is discretely selective whenever \(X\) is discretely selective.
However, we have not yet identified a general argument that would prove such a claim, and leave Question \ref{question:FCD} at the end of this section
for further investigation.

We now turn our attention to isolating some sufficient conditions for a space \(X\) which do guarantee that \(\mathcal F[X]\)
is discretely selective.
\begin{definition}
	We say that a space \(X\) is \emph{nowhere separable} if no nonempty open subset of \(X\) is separable.
\end{definition}
Quantifying only over open subsets of \(X\) is clearly motivated by the fact that any space has separable subspaces.
It is also immediate that nowhere separable spaces fail to be separable.
This implication does not reverse since any uncountable discrete space is neither separable nor nowhere separable.

\begin{theorem} \label{thm:SufficientFXDS}
	Let \(X\) be a space.
	\begin{enumerate}[label=(\roman*),ref=(\roman*)]
		\item \label{discretePixley}
		If \(X\) is discrete, then \(\mathcal F[X]\) is discrete.
		In particular, both \(X\) and \(\mathcal F[X]\) are discretely selective.
		\item \label{nowhereSeparable}
		If \(X\) is \(T_1\), injectively discretely selective, and nowhere separable, then \(\mathcal F[X]\) is discretely selective (by virtue of
		being injectively discretely selective).
	\end{enumerate}
\end{theorem}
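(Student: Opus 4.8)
The plan is to handle the two parts separately. Part \ref{discretePixley} is a one-line observation: if $X$ is discrete, then every $F\in[X]^{<\aleph_0}$ is an open subset of $X$, so $[F,F]=\{F\}$ is a basic open set of $\mathcal F[X]$; hence $\mathcal F[X]$ is discrete, and a discrete space is discretely selective since every subset is closed and relatively discrete, so an arbitrary selection works. For part \ref{nowhereSeparable} I would prove the stronger statement that $\mathcal F[X]$ is \emph{injectively} discretely selective, which gives discrete selectivity by Remark \ref{remark:ObviousCD}.

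So, given a sequence $\langle\mathcal U_n:n\in\mathbb N\rangle$ of nonempty open subsets of $\mathcal F[X]$, first shrink each $\mathcal U_n$ to a basic open set $[F_n,U_n]\subseteq\mathcal U_n$ with $F_n\in[X]^{<\aleph_0}$, $U_n\in\mathscr T_X$, and $F_n\subseteq U_n$, and put $Z=\bigcup_{m\in\mathbb N}F_m$, a countable subset of $X$. The one place nowhere separability is used is the observation that $\mathrm{cl}_X(Z)$ has empty interior: otherwise that interior would be a nonempty open set in which the trace of $Z$ is a countable dense subset, contradicting nowhere separability. Hence $W_n:=U_n\setminus\mathrm{cl}_X(Z)$ is a nonempty open subset of $X$ for every $n$, so injective discrete selectivity of $X$ applied to $\langle W_n\rangle$ yields $x_n\in W_n$ with $\langle x_n:n\in\mathbb N\rangle\in\mathrm{CD}^\ast_X$. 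Setting $\mathcal G_n=F_n\cup\{x_n\}$, we get $\mathcal G_n\in[F_n,U_n]\subseteq\mathcal U_n$, and it remains to verify $\langle\mathcal G_n:n\in\mathbb N\rangle\in\mathrm{CD}^\ast_{\mathcal F[X]}$.

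The verification hinges on one structural fact: $\mathcal G_n\subseteq\mathcal G_m$ forces $n=m$, since $x_n\notin Z\supseteq F_m$ and the $x_k$ are distinct, so $x_n\in F_m\cup\{x_m\}$ only when $x_n=x_m$. From this, $n\mapsto\mathcal G_n$ is injective, and relative discreteness of $\{\mathcal G_n:n\in\mathbb N\}$ in $\mathcal F[X]$ is immediate, as $[\mathcal G_n,X]$ meets the family only in $\mathcal G_n$. For closedness, take $H\in[X]^{<\aleph_0}$ not among the $\mathcal G_k$ and split on $\#(H\cap\{x_k:k\in\mathbb N\})$: if it is at least $2$, no $\mathcal G_k$ contains $H$ and $[H,X]$ separates $H$ from the family; if it is empty, then $[H,X\setminus\{x_k:k\in\mathbb N\}]$ works, using that $\{x_k:k\in\mathbb N\}\in\mathrm{CD}_X$ is closed and $x_k\in\mathcal G_k$; and if it is a singleton $\{x_m\}$, then $\mathcal G_m$ is the only member of the family that can contain $H$, so either $H\not\subseteq\mathcal G_m$ and $[H,X]$ works, or $H\subsetneq\mathcal G_m$ and $[H,X\setminus\{y\}]$ works for any $y\in\mathcal G_m\setminus H$ (using $T_1$).

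The main obstacle is exactly the phenomenon illustrated by the two examples preceding the theorem: forming $F_n\cup\{x_n\}$ with $\langle x_n\rangle$ merely closed discrete in $X$ can still produce a convergent sequence of finite sets when the $F_n$ accumulate. Nowhere separability is precisely what lets us push every selected point off $\mathrm{cl}_X(Z)$ while still leaving nonempty open sets $W_n$ in which to invoke injective discrete selectivity, and it is this ``off the closure of every $F_m$'' condition that makes $\{\mathcal G_n:n\in\mathbb N\}$ an antichain --- the fact that drives the entire verification.
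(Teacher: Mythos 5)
Your proof is correct and follows essentially the same route as the paper: nowhere separability pushes the selected points off \(\mathrm{cl}_X\left(\bigcup_n F_n\right)\), injective discrete selectivity of \(X\) supplies the injective closed discrete sequence \(\langle x_n : n \in \mathbb N\rangle\), and one verifies that the sets \(G_n = F_n \cup \{x_n\}\) form a closed, relatively discrete family in \(\mathcal F[X]\). Your antichain observation (\(G_n \subseteq G_m \implies n = m\)) organizes the closedness and discreteness verification a bit more cleanly than the paper's case analysis with isolating neighborhoods \(V_n\), but the underlying argument is the same.
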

\begin{proof}
	We need only show \ref{nowhereSeparable} since \ref{discretePixley} is immediate.
	So suppose \(X\) is \(T_1\), injectively discretely selective, and nowhere separable.
	Let \(\langle [F_n , U_n] : n \in \mathbb N \rangle\)
	be a sequence of basic open subsets of \(\mathcal F[X]\)
	and let \(\mathbf F = \mathrm{cl}_X \bigcup \{ F_n : n \in \mathbb N \}\).
	Since \(X\) is nowhere separable, \(U_n \setminus \mathbf F \neq \varnothing\) for each \(n \in \mathbb N\).
	So let \(x_n \in U_n \setminus \mathbf F\) be chosen in such a way that \(n \mapsto x_n\), \(\mathbb N \to X\),
	is injective and \(\{ x_n : n \in \mathbb N \} \in \mathrm{CD}_X\).
	Define \(G_n = F_n \cup \{x_n\}\) and note that \(G_n \in [F_n,U_n]\).
	We will show that \(\mathcal G := \{G_n : n \in \mathbb N\} \in \mathrm{CD}_{\mathcal F[X]}\).
	
	Before addressing the closed and relative discreteness of \(\mathcal G\), let \(V_n \in \mathscr T_X\)
	be such that \(V_n \cap \{x_k:k\in\mathbb N\} = \{x_n\}\) and \(V_n \subseteq X \setminus \mathbf F\)
	for each \(n \in \mathbb N\).
	
	We first show that \(\mathcal G\) is closed
	So let \(E \not\in \mathcal G\).
	We proceed by cases.
	
	If \(E \cap \{ x_n : n \in \mathbb N \} = \varnothing\), we can set \(W = X \setminus \{x_n:n\in\mathbb N\}\)
	and conclude that \([E,W] \cap \mathcal G = \varnothing\).
	
	Otherwise, \(E \cap \{ x_n : n \in \mathbb N \} \neq \varnothing\).
	Let \(\Lambda = \{ n \in \mathbb N : x_n \in E \}\).
	We now consider sub-cases.
	
	If it so happens that \(E = \{ x_n : n \in \Lambda \}\), then \(W := \bigcup \{ V_n : n \in \Lambda \}\)
	has the property that \([E,W] \cap \mathcal G = \varnothing\).
	
	Otherwise, \(E \setminus \{x_n : n \in \mathbb N\} \neq \varnothing\).
	So let \(\{y_1,\ldots, y_m\}\) be a faithful enumeration of \(E \setminus \{x_n:n\in\mathbb N\}\).
	For each \(j = 1,2,\ldots, m\), we can let \(W_j\) be an open subset of \(X\) with the property that
	\[W_j \cap \left[ E \cup \bigcup_{n = 1}^{\max \Lambda} G_n \right] = \{y_j\}.\]
	Then we set \[W = \bigcup \{ V_n : n \in \Lambda\} \cup \bigcup_{j=1}^m W_j\]
	and observe that \(E \subseteq W\).
	
	Note that, for any \(n > \max \Lambda\), \(E \not \subseteq G_n\) since \(x_{\max \Lambda} \in E\) and
	\(x_{\max \Lambda} \not\in G_n\).
	That is, \(G_n \not\in [E, W]\) for any \(n > \max \Lambda\).
	
	If \(\# \Lambda \geq 2\), we have that \(E \not\subseteq G_n\) for any \(n \in \mathbb N\).
	Indeed, let \(n \in \mathbb N\) be arbitrary and suppose \(x_n \in E\).
	Since \(\# \Lambda \geq 2\), we can let \(m \neq n\) be such that \(x_m \in E\).
	Note that \(x_m \not\in G_n\), so \(E \not\subseteq G_n\).
	
	So suppose \(\# \Lambda = 1\) and, for \(n \in \mathbb N\), suppose that \(E \subseteq G_n\).
	For the \(\lambda \in \Lambda\), \(x_\lambda \in E \subseteq G_n\), which means that \(n = \lambda\).
	Since \(E \neq G_n\), there must be some \(z \in G_n \setminus E\).
	In particular, \(z \in F_n\) and so we see that \(z \not\in W\).
	That is, \(G_n \not\in [E,W]\).
	
	Conclusively, \(\mathcal G\) is closed.
	
	We finish by showing that \(\mathcal G\) is relatively discrete.
	So let \(n \in \mathbb N\) be arbitrary and consider
	\(W_n = V_n \cup \left( X \setminus \{ x_k : k \in \mathbb N \} \right)\).
	Note that \(G_n \subseteq W_n\) since \(x_n \in V_n\) and \(F_n \subseteq X \setminus \{ x_k : k \in \mathbb N \}\).
	Moreover, by the injectivity of \(n \mapsto x_n\), \(\mathbb N \to X\), and the fact that \(V_n \cap \{ x_k : k \in \mathbb N \} = \{ x_n \}\),
	we see that \([G_n,W_n] \cap \mathcal G = \{ G_n \}\).
\end{proof}
The property of being nowhere separable is not enough to guarantee the property of being discretely selective.
\begin{example} \label{example:2kappa}
	The space \(2^\kappa\), for a cardinal \(\kappa \geq \mathfrak c^+\), is a nowhere separable topological group which is not discretely selective.
	The fact that it is nowhere separable follows from \cite[Exercise 2.3.F(c)]{Engelking} and the fact that every nonempty open
	subset of \(2^\kappa\) contains a homeomorphic copy of \(2^\kappa\).
\end{example}
However, there are examples of \(T_1\), injectively discretely selective, nowhere separable spaces.
\begin{example}
	The space \(\mathbb Z^\kappa\), for a cardinal \(\kappa \geq \mathfrak c^+\), is an example of a nowhere separable \(T_1\) topological group which is
	injectively discretely selective.
	In fact, \(\mathrm{II} \underset{\mathrm{mark}}{\uparrow} \vec{\mathsf{G}}_1(\mathscr T_{\mathbb Z^\kappa}, \mathrm{CD}_{\mathbb Z^\kappa}^\ast)\).
\end{example}
\begin{proof}
	The fact that \(\mathbb Z^\kappa\) is nowhere separable follows from a similar argument as used in Example \ref{example:2kappa}.
	For the purposes of this argument, note that
	\[[f;F] := \{ g \in \mathbb Z^\kappa : g \restriction_F = f \restriction_F \},\]
	where \(f \in \mathbb Z^\kappa\) and \(F \in [\kappa]^{<\aleph_0}\), generates a basis of \(\mathbb Z^\kappa\).
	
	To see that \(\mathrm{II} \underset{\mathrm{mark}}{\uparrow} \vec{\mathsf{G}}_1(\mathscr T_{\mathbb Z^\kappa}, \mathrm{CD}_{\mathbb Z^\kappa}^\ast)\), define
	\[\tau : \mathbb Z^\kappa \times [\kappa]^{<\aleph_0} \times \mathbb N \to \mathbb Z^\kappa\]
	by the rule
	\[\tau(f,F,n)(x) = \begin{cases} f(x), & x \in F\\ n, & x \in \kappa \setminus F \end{cases}\]
	Consider a sequence \(\langle [f_n ; F_n] : n \in \mathbb N \rangle\),
	where \(f_n \in \mathbb Z^\kappa\) and \(F_n \in [\kappa]^{<\aleph_0}\),
	of nonempty basic open neighborhoods of \(\mathbb Z^\kappa\).
	Since \(\kappa\) is uncountable, there is some \(x \in \kappa \setminus \bigcup_{n\in\mathbb N} F_n\).
	Note then that \(\tau(f_n,F_n,n)(x) = n\) for each \(n \in \mathbb N\), and so, for any \(h \in \mathbb Z^\kappa\),
	\[\#\{ n \in \mathbb N : \tau(f_n,F_n,n) \in [h; \{x\}] \} \leq 1.\]
	Hence, \(\langle \tau(f_n,F_n,n) : n \in \mathbb N \rangle \in \mathrm{CD}^\ast_{\mathbb Z^\kappa}\).
\end{proof}
We also note that, just as with the SHD property, \(\mathcal F[X]\) being discretely selective does not imply that \(X\) must be discretely selective.
To do this we note that the proof of \cite[Theorem 5]{RemarksOnSHD} is a proof of the following assertion.
\begin{proposition} \label{prop:UncountableDiscretelySelectiveHyper}
	For an uncountable cardinal \(\kappa\) and a dense subspace \(X\) of
	\[\Sigma(2,0,\kappa) := \left\{ f \in 2^\kappa : \#\{ \alpha \in \kappa : f(\alpha) \neq 0 \} \leq \aleph_0 \right\},\]
	viewed as a subspace of \(2^\kappa\),
	\(\mathcal F[X]\) is discretely selective.
\end{proposition}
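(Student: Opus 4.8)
The plan is, for a given sequence of nonempty open subsets of $\mathcal F[X]$ — which, passing to smaller basic sets, we may take to be of the form $[F_n, U_n]$ with $F_n \in [X]^{<\aleph_0}$ and $U_n$ open in $X$ — to choose selections $G_n \in [F_n, U_n]$ of the shape $G_n = F_n \cup \{x_n\}$, where the new point $x_n \in U_n$ is steered by fresh coordinates of $\kappa$, so that $\mathcal G := \{G_n : n\in\mathbb N\}$ is closed and relatively discrete in $\mathcal F[X]$.

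First, for each $n$, I would fix $z_n \in F_n$ and a finite $D_n \subseteq \kappa$ with $z_n \in \{f\in 2^\kappa : f\restriction D_n = z_n\restriction D_n\}\cap X \subseteq U_n$ (possible since $U_n$ is open in $X \subseteq 2^\kappa$). Put $Z = \bigcup_n F_n$ and let $C$ be the countable set $\bigcup_{n\in\mathbb N} D_n \cup \bigcup_{z\in Z}\mathrm{supp}(z)$, where $\mathrm{supp}(z) = \{\alpha\in\kappa : z(\alpha)\neq 0\}$. As $\kappa$ is uncountable, choose $\gamma_0 \in \kappa\setminus C$, and then recursively choose coordinates $\gamma_n$ ($n\ge1$) avoiding $C$, all previously chosen $\gamma_i$, and the supports of all previously chosen $x_i$, together with points $x_n\in X$ satisfying $x_n\restriction D_n = z_n\restriction D_n$, $x_n(\gamma_0) = x_n(\gamma_n) = 1$, and $x_n(\gamma_i)=0$ for $1\le i<n$. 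Such $x_n$ exists: these are finitely many requirements on pairwise distinct coordinates, none of which lies in $D_n$, so they determine a nonempty open subset of $2^\kappa$; that set is met by a finite-support function, hence by a point of $\Sigma(2,0,\kappa)$, hence — since $X$ is dense in $\Sigma(2,0,\kappa)$, which is dense in $2^\kappa$ — by a point of $X$. Then $G_n = F_n\cup\{x_n\}\in[F_n,U_n]$.

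Next I would record the combinatorics: for $i\ge1$ one has $x_n(\gamma_i)=1$ exactly when $i=n$ (for $i>n$ because $\gamma_i$ was chosen outside $\mathrm{supp}(x_n)$), whereas $z(\gamma_0)=z(\gamma_i)=0$ for every $z\in Z$ and every $i$. Hence the $x_n$ are pairwise distinct, none lies in $Z$, and $x_n\notin G_m$ whenever $m\neq n$; so $G_n\subseteq G_m$ forces $n=m$, which yields relative discreteness of $\mathcal G$ since $[G_n,X]\cap\mathcal G=\{G_n\}$. For closedness, take $E\in\mathcal F[X]\setminus\mathcal G$. If $E\not\subseteq G_n$ for all $n$, then $[E,X]\cap\mathcal G=\varnothing$. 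Otherwise $I_E:=\{n:E\subseteq G_n\}\neq\varnothing$; if $I_E$ is finite, delete from $X$ one point of each $G_n\setminus E$ ($n\in I_E$), which is legitimate as $X$ is $T_1$, obtaining an open $V\supseteq E$ with $[E,V]\cap\mathcal G=\varnothing$. If $I_E$ is infinite, then comparing two of its members shows $x_n\notin E$, hence $E\subseteq F_n\subseteq Z$, for every $n\in I_E$; so every point of $E$ vanishes at $\gamma_0$, whence $V:=\{f\in X:f(\gamma_0)=0\}$ is an open set containing $E$ with $[E,V]\cap\mathcal G=\varnothing$ (no $x_n$, and therefore no $G_n$, lies in $V$). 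Thus $\mathcal G\in\mathrm{CD}_{\mathcal F[X]}$, which is what discrete selectivity of $\mathcal F[X]$ demands.

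I expect the case $\#I_E=\aleph_0$ to be where the difficulty lies, and it is worth saying why the result is not just an application of Theorem~\ref{thm:SufficientFXDS}\ref{nowhereSeparable} — note that $X$ is $T_1$ and, being dense in the nowhere separable space $\Sigma(2,0,\kappa)$, is itself nowhere separable. The tempting move is to make $\{x_n:n\in\mathbb N\}$ closed and relatively discrete in $X$; but because $X$ is only assumed dense in $\Sigma(2,0,\kappa)$, a chosen $x_n$ is not controlled off a prescribed finite set of coordinates, so the $x_n$ may accumulate within $X$, and it is not clear that such an $X$ need be injectively discretely selective. The argument survives because only a weaker property is actually required — that no element of $Z=\bigcup_n F_n$ be a limit point of $\{x_n:n\in\mathbb N\}$ — and this is guaranteed by the single shared coordinate $\gamma_0$, on which all the $x_n$ are $1$ while all of $Z$ is $0$; the coordinates $\gamma_n$ serve only to keep the $x_n$ distinct. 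This is the mechanism behind the proof of \cite[Theorem 5]{RemarksOnSHD}.
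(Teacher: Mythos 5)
Your proof is correct and follows essentially the same route as the paper, which simply cites the construction of the \(G_n = F_n \cup \{x_n\}\) from \cite[Theorem 5]{RemarksOnSHD} and observes that the resulting family is locally finite, hence closed and relatively discrete in the \(T_1\) space \(\mathcal F[X]\). You reconstruct that construction in full (the shared coordinate \(\gamma_0\) separating the \(x_n\) from \(\bigcup_n F_n\), plus fresh coordinates \(\gamma_n\) keeping the \(x_n\) apart) and verify membership in \(\mathrm{CD}_{\mathcal F[X]}\) by a direct case analysis rather than via local finiteness; both verifications are sound.
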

\begin{proof}
	We need only observe that the family \(\{ G_n: n \in \mathbb N \}\) generated
	in the proof of \cite[Theorem 5]{RemarksOnSHD} is locally finite, hence closed and relatively discrete since \(\mathcal F[X]\) is a \(T_1\) space.
\end{proof}
\begin{example} \label{example:SigmaProduct}
	For an uncountable cardinal \(\kappa\), \(X := \Sigma(2,0,\kappa)\) is a space which is not discretely selective,
	but for which \(\mathcal F[X]\) is discretely selective.
	Indeed, it is known that \(\Sigma(2,0,\kappa)\) is sequentially compact for any cardinal \(\kappa\).
	To see this, consider any sequence \(\langle x_n : n \in \mathbb N \rangle\) of \(\Sigma(2,0,\kappa)\) and let
	\(A_n = \{ \alpha \in \kappa : x_n(\alpha) \neq 0 \}\) for each \(n \in \mathbb N\).
	Note that \(A := \bigcup \left\{A_n : n \in \mathbb N \right\}\) is a countable subset of \(A\).
	Then the projection \(2^\kappa \to 2^A\) witnesses the existence of a convergent subsequence of \(\langle x_n : n \in \mathbb N\rangle\),
	just as argued in \cite[Example 2.2]{BellaSpadaro}.
	Hence, \(X\) is not discretely selective.
	Then Proposition \ref{prop:UncountableDiscretelySelectiveHyper} applies to guarantee that \(\mathcal F[X]\)
	is discretely selective.
\end{example}

Note that, even with the hypotheses for \(X\) in Theorem \ref{thm:SufficientFXDS} \ref{nowhereSeparable},
we can't necessarily guarantee transferral of strategic information, contrasting against Theorems \ref{thm:PfinCD} and \ref{thm:HyperSDS}.
\begin{question} \label{question:FCD}
	Are any of the implications constituting the expression
	\[\mathsf G_1(\mathscr T_X, \mathrm{CD}_X) \leq_{\mathrm{II}}^+ \mathsf G_1(\mathscr T_{\mathcal F[X]} , \mathrm{CD}_{\mathcal F[X]})\]
	true for all spaces \(X\)?
	In particular, is \(\mathcal F[X]\) discretely selective whenever \(X\) is discretely selective?
\end{question}

\subsection{Remarks on variations of the SHD property}

We first note that closed, relatively discrete subsets and sequences that are non-convergent on some class of subsequences transfer
from \(\mathcal P_{\mathrm{fin}}(X)\) to \(\mathcal F[X]\).
\begin{proposition} \label{prop:pushingCD}
	If \(\mathbf F \in \mathrm{CD}_{\mathcal P_{\mathrm{fin}}(X)}\), then \(\mathbf F \in \mathrm{CD}_{\mathcal F[X]}\).
\end{proposition}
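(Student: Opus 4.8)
The plan is to reduce everything to the comparison of topologies already recorded in the excerpt: the identity map \(\mathcal F[X] \to \mathcal P_{\mathrm{fin}}(X)\) is continuous, i.e.\ the Pixley--Roy topology on \([X]^{<\aleph_0}\) is finer than the subspace topology inherited from the Vietoris hyperspace \(\mathbb K(X)\). Since both clauses in the definition of membership in \(\mathrm{CD}\) --- being closed, and being relatively discrete --- are monotone under passing to a finer topology, the containment \(\mathrm{CD}_{\mathcal P_{\mathrm{fin}}(X)} \subseteq \mathrm{CD}_{\mathcal F[X]}\) will follow essentially for free.

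First I would stress that \(\mathbf F \subseteq [X]^{<\aleph_0}\) is literally the same set whether regarded inside \(\mathcal P_{\mathrm{fin}}(X)\) or inside \(\mathcal F[X]\); only the ambient topology changes, so nothing needs to be transported along a map. Then assume \(\mathbf F \in \mathrm{CD}_{\mathcal P_{\mathrm{fin}}(X)}\). For closedness: the complement \(\mathcal P_{\mathrm{fin}}(X) \setminus \mathbf F\) is open in \(\mathcal P_{\mathrm{fin}}(X)\), hence open in \(\mathcal F[X]\) because the Pixley--Roy topology refines the Vietoris one, so \(\mathbf F\) is closed in \(\mathcal F[X]\). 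For relative discreteness: given \(F \in \mathbf F\), choose a \(\mathcal P_{\mathrm{fin}}(X)\)-open set \(W\) with \(W \cap \mathbf F = \{F\}\); the very same \(W\) is \(\mathcal F[X]\)-open and still satisfies \(W \cap \mathbf F = \{F\}\). Hence \(\mathbf F\) is closed and relatively discrete in \(\mathcal F[X]\), i.e.\ \(\mathbf F \in \mathrm{CD}_{\mathcal F[X]}\).

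If a map-based phrasing is preferred, one can instead route through the continuous bijection \(i : \mathcal F[X] \to \mathcal P_{\mathrm{fin}}(X)\): \(i^{-1}[\mathbf F] = \mathbf F\) is closed by continuity, and each discreteness-witness \(W\) pulls back to the open set \(i^{-1}[W] = W\). I do not expect any genuine obstacle here; the only point worth stating with care is that we are dealing with a single underlying set carrying two comparable topologies, so the entire content of the proof is the earlier proposition that the Pixley--Roy topology is finer than the one induced from \(\mathbb K(X)\).
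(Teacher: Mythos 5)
Your proof is correct and rests on the same underlying fact the paper uses, namely that the Pixley--Roy topology on \([X]^{<\aleph_0}\) refines the Vietoris subspace topology. The one place you genuinely streamline is the relative discreteness clause: the paper handles closedness exactly as you do (coarser-to-finer is immediate) but then, for discreteness, explicitly takes a Vietoris basic neighborhood \([U_1,\ldots,U_n]\) isolating \(F\) in \(\mathbf F\) and builds the Pixley--Roy neighborhood \([F,\bigcup_{j}U_j]\), verifying by hand that any \(G \in \mathbf F \cap [F,\bigcup_j U_j]\) lies back in \([U_1,\ldots,U_n]\) and hence equals \(F\). That verification is really just a re-derivation, in the specific instance needed, of the containment \([F,\bigcup_j U_j]\subseteq[U_1,\ldots,U_n]\) underlying the earlier comparison-of-topologies proposition; your observation that the discreteness witness, like the closedness clause, is inherited verbatim under passage to a finer topology on the same underlying set makes the explicit construction unnecessary. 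Both arguments are sound; yours is shorter and makes the monotonicity of \(\mathrm{CD}\) under refinement explicit, while the paper's has the minor virtue of exhibiting a concrete Pixley--Roy basic neighborhood witnessing discreteness.
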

\begin{proof}
	Suppose \(\mathbf F \in \mathrm{CD}_{\mathcal P_{\mathrm{fin}}(X)}\).
	Since \(\mathcal P_{\mathrm{fin}}(X)\) has a coarser topology than \(\mathcal F[X]\), \(\mathbf F\) is immediately seen to be
	closed in \(\mathcal F[X]\).
	To see that \(\mathbf F\) is relatively discrete in \(\mathcal F[X]\), let \(F \in \mathbf F\).
	By relative discreteness in \(\mathcal P_{\mathrm{fin}}(X)\), we can find a basic neighborhood
	\([U_1,\ldots, U_n]\) of \(F\) such that
	\[\mathbf F \cap [U_1, \ldots, U_n] = \{F\}.\]
	Let \(W = \bigcup_{j=1}^n U_j\) and consider the basic neighborhood \([F,W]\) of \(F\)
	in \(\mathcal F[X]\).
	Suppose \(G \in \mathbf F\) is such that \(G \in \mathbf F \cap [F,W]\).
	Since \(F \subseteq G\), we can see that
	\[\varnothing \neq F \cap U_j \subseteq G \cap U_j\]
	for each \(j = 1,\ldots, n\).
	Moreover, \(G \subseteq \bigcup_{j=1}^n U_j\).
	Hence, \(G \in [U_1, \ldots , U_n]\) and so \(F = G\).
\end{proof}
Note, however, that \(\{ \{x\} : x \in \mathbb R \}\) is a closed and relatively discrete subspace of \(\mathcal F[\mathbb R]\),
whereas \(x \mapsto \{x\}\), \(\mathbb R \to \mathcal P_{\mathrm{fin}}(\mathbb R)\), is a homeomorphism onto its range, and so \(\{ \{x\} : x \in \mathbb R \}\)
is not relatively discrete in \(\mathcal P_{\mathrm{fin}}(\mathbb R)\).
Hence, Proposition \ref{prop:pushingCD}, in general, does not reverse.
\begin{proposition} \label{prop:PushingConvergence}
	For a space \(X\), let \(F \in ([X]^{<\aleph_0})^{\mathbb N}\).
	Then
	\[
		\{ \phi \in \mathbb S : F \circ \phi \in \mathrm{CS}_{\mathcal F[X]} \}
		\subseteq \{ \phi \in \mathbb S : F \circ \phi \in \mathrm{CS}_{\mathcal P_{\mathrm{fin}}(X)} \}.
	\]
	
	Consequently, for
	\[\mathfrak B \in \{\mathrm{SDS}, \mathrm{wSD}, \mathrm{gSDS}, \mathrm{gDS}\} \cup \{ \mathrm{wSD}^\alpha : \alpha \in (0,1] \} \cup \{ \mathrm{SDS}^\alpha : \alpha \in (0,1] \},\]
	\(F \in \mathfrak B_{\mathcal P_{\mathrm{fin}}(X)} \implies F \in \mathfrak B_{\mathcal F[X]}\).
\end{proposition}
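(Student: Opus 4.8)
The plan is to derive everything from the single fact, recorded above, that the identity map $\mathcal F[X] \to \mathcal P_{\mathrm{fin}}(X)$ is continuous. Since this map is the identity on underlying points, continuity says exactly that whenever a sequence converges in $\mathcal F[X]$ to a point $F$, it also converges in $\mathcal P_{\mathrm{fin}}(X)$ to that same $F$; in operator form, $\mathrm{CS}_{\mathcal F[X]} \subseteq \mathrm{CS}_{\mathcal P_{\mathrm{fin}}(X)}$ as subsets of $([X]^{<\aleph_0})^{\mathbb N}$. The displayed inclusion in the statement is then immediate: for any $\phi \in \mathbb S$, if $F \circ \phi \in \mathrm{CS}_{\mathcal F[X]}$ then $F \circ \phi \in \mathrm{CS}_{\mathcal P_{\mathrm{fin}}(X)}$.

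For the ``consequently'' clause, I would first upgrade this to its two natural corollaries. Contrapositively, a subsequence of $F$ that fails to converge in $\mathcal P_{\mathrm{fin}}(X)$ also fails to converge in $\mathcal F[X]$; quantifying over all subsequences gives $\mathrm{SDS}_{\mathcal P_{\mathrm{fin}}(X)} \subseteq \mathrm{SDS}_{\mathcal F[X]}$, and quantifying only over $\mathbf D_\alpha^\ast$ gives $\mathrm{SDS}^\alpha_{\mathcal P_{\mathrm{fin}}(X)} \subseteq \mathrm{SDS}^\alpha_{\mathcal F[X]}$. The point is that each remaining operator is obtained from $\mathrm{CS}$ or $\mathrm{SDS}$ by applying a quantifier over $\phi$ ranging in a fixed subset of $\mathbb S$ — existential for $\mathrm{wSD}$ and $\mathrm{wSD}^\alpha$, ``comeagerly many'' for $\mathrm{gSDS}$ and $\mathrm{gDS}$ — and each such quantifier is monotone under enlarging the set of admissible $\phi$'s.

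Concretely: if $\phi$ witnesses $F \in \mathrm{wSD}_{\mathcal P_{\mathrm{fin}}(X)}$, i.e.\ $F \circ \phi \in \mathrm{SDS}_{\mathcal P_{\mathrm{fin}}(X)}$, then the same $\phi$ witnesses $F \in \mathrm{wSD}_{\mathcal F[X]}$ since $\mathrm{SDS}_{\mathcal P_{\mathrm{fin}}(X)} \subseteq \mathrm{SDS}_{\mathcal F[X]}$; the argument for $\mathrm{wSD}^\alpha$ is identical with $\phi$ drawn from $\mathbf D_\alpha^\ast$. For $\mathrm{gSDS}$, the set $\{\phi \in \mathbb S : F\circ\phi \in \mathrm{SDS}_{\mathcal P_{\mathrm{fin}}(X)}\}$ is contained in $\{\phi \in \mathbb S : F\circ\phi \in \mathrm{SDS}_{\mathcal F[X]}\}$, and a superset of a comeager set is comeager; $\mathrm{gDS}$ is handled the same way via $\{\phi : F\circ\phi \notin \mathrm{CS}_{\mathcal P_{\mathrm{fin}}(X)}\} \subseteq \{\phi : F\circ\phi \notin \mathrm{CS}_{\mathcal F[X]}\}$. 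The cases $\mathrm{SDS}$ and $\mathrm{SDS}^\alpha$ are precisely the two corollaries established at the start of this paragraph.

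There is no genuine obstacle here: the whole proposition is a formal unwinding of continuity of the identity together with monotonicity of $\exists$, $\forall$, and $\forall^\ast$ in their bound variable. The only point that requires a moment's care is keeping the direction of each inclusion straight, since passing to complements of $\mathrm{CS}$ (as in $\mathrm{SDS}$ and $\mathrm{SDS}^\alpha$) reverses it.
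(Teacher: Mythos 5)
Your proof is correct and follows essentially the same route as the paper: deduce \(\mathrm{CS}_{\mathcal F[X]} \subseteq \mathrm{CS}_{\mathcal P_{\mathrm{fin}}(X)}\) from continuity of the identity map, pass to complements to get \(\{\phi : F\circ\phi \notin \mathrm{CS}_{\mathcal P_{\mathrm{fin}}(X)}\} \subseteq \{\phi : F\circ\phi \notin \mathrm{CS}_{\mathcal F[X]}\}\) (handling \(\mathrm{SDS}\), \(\mathrm{SDS}^\alpha\), \(\mathrm{gDS}\)), and then use the induced inclusion \(\{\phi : F\circ\phi \in \mathrm{SDS}_{\mathcal P_{\mathrm{fin}}(X)}\} \subseteq \{\phi : F\circ\phi \in \mathrm{SDS}_{\mathcal F[X]}\}\) together with monotonicity of the existential and comeager quantifiers (handling \(\mathrm{wSD}\), \(\mathrm{wSD}^\alpha\), \(\mathrm{gSDS}\)). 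This matches the paper's argument case for case.
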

\begin{proof}
	Suppose that \(\phi \in \mathbb S\) is such that \(F \circ \phi\) is convergent in \(\mathcal F[X]\).
	Let \(E \in \mathcal F[X]\) be such that \(F \circ \phi \to E\).
	Since the identity mapping \(\mathcal F[X] \to \mathcal P_{\mathrm{fin}}(X)\) is continuous,
	we see that \(F \circ \phi \to E\) in \(\mathcal P_{\mathrm{fin}}(X)\), as well.
	
	Note that we have proved that
	\[
		\{ \phi \in \mathbb S : F \circ \phi \not\in \mathrm{CS}_{\mathcal P_{\mathrm{fin}}(X)} \}
		\subseteq \{ \phi \in \mathbb S : F \circ \phi \not\in \mathrm{CS}_{\mathcal F[X]} \},
	\]
	so we obtain that \(F \in \mathfrak B_{\mathcal P_{\mathrm{fin}}(X)} \implies F \in \mathfrak B_{\mathcal F[X]}\)
	for any \[\mathfrak B \in \{\mathrm{SDS}, \mathrm{gDS}\} \cup \{ \mathrm{SDS}^\alpha : \alpha \in (0,1] \}.\]
	
	Note also that, for \(F \in ([X]^{<\aleph_0})^{\mathbb N}\),
	\[
		\{ \phi \in \mathbb S : F \circ \phi \in \mathrm{SDS}_{\mathcal P_{\mathrm{fin}}}(X) \}
		\subseteq \{ \phi \in \mathbb S : F \circ \phi \in \mathrm{SDS}_{\mathcal F[X]} \}.
	\]
	This establishes that \(F \in \mathfrak B_{\mathcal P_{\mathrm{fin}}(X)} \implies F \in \mathfrak B_{\mathcal F[X]}\)
	for any \[\mathfrak B \in \{ \mathrm{wSD}, \mathrm{gSDS} \} \cup \{ \mathrm{wSD}^\alpha : \alpha \in (0,1] \},\]
	finishing the proof.
\end{proof}

We isolate one of the key ideas from the proof of Theorem 2.6 of \cite{BellaSpadaro}, with a slight generalizing twist,
while also showing it extends to the Vietoris hyperspace of finite subsets, as well.
\begin{definition}
	Let \(\mathcal S\) be a subsemigroup of \(\mathbb S\) and, for a space \(X\),
	\[\mathrm{SDS}^{\mathcal S}_X
	:= \{ s \in X^{\mathbb N} : (\forall \phi \in \mathcal S)(\forall x \in X)(\exists U \in \mathcal N_{X,x})(\exists \psi \in \mathcal S)(\forall n \in \mathbb N)\ 
	s \circ \phi \circ \psi(n) \not\in U \}.\]
\end{definition}
Note that \(\mathrm{SDS}^{\mathbb S}_X = \mathrm{SDS}_X\).
\begin{lemma} \label{lem:PushingSDS}
	Let \(\mathcal S\) be a subsemigroup of \(\mathbb S\) and \(X\) be a space.
	Consider \(\langle x_n : n \in \mathbb N \rangle \in X^{\mathbb N}\) and
	\(\langle F_n : n \in \mathbb N \rangle \in \mathcal F[X]^{\mathbb N}\).
	For each \(n \in \mathbb N\), let \(G_n = F_n \cup \{x_n\}\).
	If \(\langle x_n : n \in \mathbb N \rangle \in \mathrm{SDS}_X^{\mathcal S}\), then
	\(\langle G_n : n \in \mathbb N \rangle \in \mathrm{SDS}^{\mathcal S}_{\mathcal P_{\mathrm{fin}}(X)}\)
	and \(\langle G_n : n \in \mathbb N \rangle \in \mathrm{SDS}^{\mathcal S}_{\mathcal F[X]}\).
\end{lemma}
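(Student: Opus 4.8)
The plan is to establish the conclusion for \(\mathcal P_{\mathrm{fin}}(X)\) directly and then obtain the one for \(\mathcal F[X]\) at no extra cost: the two hyperspaces share the underlying set \([X]^{<\aleph_0}\) and \(\mathcal P_{\mathrm{fin}}(X)\) carries the coarser topology, so every neighborhood of a point in \(\mathcal P_{\mathrm{fin}}(X)\) is also a neighborhood of that point in \(\mathcal F[X]\); hence any family of witnesses showing \(\langle G_n : n \in \mathbb N \rangle \in \mathrm{SDS}^{\mathcal S}_{\mathcal P_{\mathrm{fin}}(X)}\) simultaneously shows \(\langle G_n : n \in \mathbb N \rangle \in \mathrm{SDS}^{\mathcal S}_{\mathcal F[X]}\). (Alternatively, one can rerun the argument below using the basic \(\mathcal F[X]\)-neighborhood \([\mathbf E, \bigcup_i U_i]\) in place of \([U_1, \ldots, U_k]\).)

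So, to verify the \(\mathcal P_{\mathrm{fin}}(X)\) statement, fix \(\phi \in \mathcal S\) and a point \(\mathbf E \in [X]^{<\aleph_0}\), say with faithful enumeration \(\mathbf E = \{e_1, \ldots, e_k\}\). The heart of the matter is to produce a \emph{single} \(\psi \in \mathcal S\) together with open sets \(U_i \in \mathcal N_{X, e_i}\) for \(1 \leq i \leq k\) such that \(x_{\phi(\psi(n))} \notin U_i\) for every \(n \in \mathbb N\) and every \(i\). I would build these by a finite induction on \(i\). For \(i = 1\), apply the defining property of \(\mathrm{SDS}^{\mathcal S}_X\) to \(\langle x_n : n \in \mathbb N\rangle\) with \(\phi\) and the point \(e_1\) to get \(U_1 \in \mathcal N_{X,e_1}\) and \(\psi^{(1)} \in \mathcal S\) with \(x_{\phi(\psi^{(1)}(n))} \notin U_1\) for all \(n\). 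Given \(\psi^{(i)} \in \mathcal S\) and \(U_1, \ldots, U_i\) with this escaping property, apply the defining property of \(\mathrm{SDS}^{\mathcal S}_X\) once more, this time with \(\phi \circ \psi^{(i)}\) (legitimate because \(\mathcal S\) is closed under composition) and the point \(e_{i+1}\), obtaining \(U_{i+1} \in \mathcal N_{X, e_{i+1}}\) and \(\theta \in \mathcal S\) with \(x_{\phi(\psi^{(i)}(\theta(n)))} \notin U_{i+1}\) for all \(n\); put \(\psi^{(i+1)} = \psi^{(i)} \circ \theta \in \mathcal S\). Since the escaping property for each earlier \(U_j\) holds for \emph{all} arguments, right-composition with \(\theta\) preserves it, so after \(k\) steps \(\psi := \psi^{(k)}\) and \(U_1, \ldots, U_k\) are as required; in particular \(x_{\phi(\psi(n))} \notin \bigcup_{i=1}^k U_i\) for all \(n\).

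To conclude, take \(\mathcal W = [U_1, \ldots, U_k]\), a basic open subset of \(\mathcal P_{\mathrm{fin}}(X)\). Because \(e_i \in U_i\) for each \(i\), we have \(\mathbf E \in \mathcal W\), i.e.\ \(\mathcal W \in \mathcal N_{\mathcal P_{\mathrm{fin}}(X), \mathbf E}\). For each \(n\), the set \(G_{\phi(\psi(n))} = F_{\phi(\psi(n))} \cup \{x_{\phi(\psi(n))}\}\) contains \(x_{\phi(\psi(n))}\), which lies outside \(\bigcup_{i=1}^k U_i\); hence \(G_{\phi(\psi(n))} \not\subseteq \bigcup_{i=1}^k U_i\) and therefore \(G_{\phi(\psi(n))} \notin \mathcal W\). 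As \(\phi\) and \(\mathbf E\) were arbitrary, this is exactly the statement that \(\langle G_n : n \in \mathbb N \rangle \in \mathrm{SDS}^{\mathcal S}_{\mathcal P_{\mathrm{fin}}(X)}\), and the \(\mathcal F[X]\) conclusion follows as noted above.

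I expect the only real obstacle to be the inductive amalgamation of the finitely many shifts into one \(\psi \in \mathcal S\). It rests on two things working together: closure of \(\mathcal S\) under composition, and the universal quantifier ``\(\forall n\)'' in the definition of \(\mathrm{SDS}^{\mathcal S}\), which is precisely what makes the escaping property robust under right-composition. One should also be mindful that a subsemigroup of \(\mathbb S\) need not contain the identity, so the induction must be seeded at \(i = 1\); and that the sets \(F_n\) play no role beyond finiteness, which is only needed to keep each \(G_n\) inside \([X]^{<\aleph_0}\). Everything else --- the hyperspace-neighborhood computations and the coarser/finer topology remark --- is routine.
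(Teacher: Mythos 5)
Your proposal is correct and follows essentially the same argument as the paper: a finite induction over the points of \(\mathbf E\), using closure of \(\mathcal S\) under composition to amalgamate the successive shifts into one \(\psi\), with the universal quantifier over \(n\) making each escaping condition stable under right-composition, and then the basic Vietoris neighborhood \([U_1,\ldots,U_k]\) misses every \(G_{\phi(\psi(n))}\). The only cosmetic difference is that the paper handles \(\mathcal F[X]\) by switching to the Pixley--Roy neighborhood \([\mathbf E, \bigcup_i U_i]\), whereas you observe the same witness works because the Pixley--Roy topology refines the Vietoris one; both are valid.
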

\begin{proof}
	Let \(s := \langle x_n : n \in \mathbb N \rangle\), \(\langle F_n : n \in \mathbb N \rangle\),
	and \(\langle G_n : n \in \mathbb N \rangle\) be as in the hypotheses.
	We show that \(\langle G_n : n \in \mathbb N \rangle \in \mathrm{SDS}^{\mathcal S}_{\mathcal P_{\mathrm{fin}}(X)}\) and
	\(\langle G_n : n \in \mathbb N \rangle \in \mathrm{SDS}^{\mathcal S}_{\mathcal F[X]}\).
	So let \(\phi \in \mathcal S\) and \(E \in [X]^{<\aleph_0}\) be arbitrary.
	Then let \(\{ y_1, \ldots, y_m \}\) be a faithful enumeration of \(E\).
	Now, since \(\phi \in \mathcal S\), we can let \(U_1\) be a neighborhood of \(y_1\) and \(\psi_1 \in \mathcal S\) be such that
	\(s \circ \phi \circ \psi_1(n) \not\in U_1\) for every \(n \in \mathbb N\).
	
	Suppose that, for some \(1 \leq k < m\), we have defined \(\{ U_j : 1 \leq j \leq k \}\) and \(\{ \psi_j : 1 \leq j \leq k \} \subseteq \mathcal S\).
	Since \(\mathcal S\) is a subsemigroup of \(\mathbb S\), \[\phi_k := \phi \circ \psi_1 \circ \cdots \circ \psi_k \in \mathcal S.\]
	Hence, we can find a neighborhood \(U_{k+1}\) of \(y_{k+1}\) and some \(\psi_{k+1} \in \mathcal S\) such that
	\[s \circ \phi_k \circ \psi_{k+1}(n) \not\in U_{k+1}\]
	for every \(n \in \mathbb N\).
	
	This completes the construction of \(\{ U_j : 1 \leq j \leq m \}\) and
	\(\psi := \psi_1 \circ \cdots \circ \psi_m \in \mathcal S\).
	
	Note that \([U_1, \ldots, U_m]\) is a neighborhood of \(E\) and that, for each \(n \in \mathbb N\),
	\(G_{\phi \circ \psi(n)} \not\in [U_1, \ldots, U_m]\).
	Since \(\phi\) and \(E\) were arbitrary, we see that
	\[\langle G_n : n \in \mathbb N \rangle \in \mathrm{SDS}_{\mathcal P_{\mathrm{fin}}(X)}^{\mathcal S}.\]
	
	In the case for \(\mathcal F[X]\), let \(U = \bigcup_{j=1}^m U_j\) and note that
	\(G_{\phi \circ \psi(n)} \not\in [E, U]\) for any \(n \in \mathbb N\).
	Hence, \[\langle G_n : n \in \mathbb N \rangle \in \mathrm{SDS}_{\mathcal F[X]}^{\mathcal S},\]
	which completes the proof.
\end{proof}
We now note a particular property of \(\mathcal C\), the subsemigroup of \(\mathbb S\) consisting of maps with cofinite range,
in relation to this modified definition of \(\mathrm{SDS}\).
For this, we will say that a subset \(A\) of a topological space \(X\) is \emph{locally finite} if, for every \(x \in X\),
there exists \(U \in \mathcal N_{X,x}\) such that \(\#\{ y \in A : y \in U \} < \aleph_0\).
\begin{proposition} \label{prop:LocallyFinite}
	For any space \(X\), \(s \in \mathrm{SDS}_X^{\mathcal C}\) if and only if \(s\) is finite-to-one and \(\{ s_n : n \in \mathbb N \}\) is locally finite.
\end{proposition}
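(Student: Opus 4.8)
The plan is to reduce the membership $s \in \mathrm{SDS}_X^{\mathcal C}$ to the single cleaner condition
\[
	(\star)\qquad (\forall x \in X)(\exists U \in \mathcal N_{X,x})\ \#\{ n \in \mathbb N : s_n \in U \} < \aleph_0,
\]
and then observe that $(\star)$ is just a repackaging of ``$s$ is finite-to-one and $\{s_n : n \in \mathbb N\}$ is locally finite.'' The whole argument is elementary once $(\star)$ is isolated.

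First I would prove $s \in \mathrm{SDS}_X^{\mathcal C} \implies (\star)$. Given $x \in X$, apply the defining property of $\mathrm{SDS}_X^{\mathcal C}$ with $\phi$ taken to be the identity map on $\mathbb N$, which lies in $\mathcal C$ since $C_{\mathrm{id}} = \varnothing$: there are $U \in \mathcal N_{X,x}$ and $\psi \in \mathcal C$ with $s_{\psi(n)} \not\in U$ for every $n$. Hence $\{ m \in \mathbb N : s_m \in U \} \subseteq \mathbb N \setminus \{\psi(n) : n \in \mathbb N\} = C_\psi$, which is finite, giving $(\star)$. For the converse $(\star) \implies s \in \mathrm{SDS}_X^{\mathcal C}$, fix $\phi \in \mathcal C$ and $x \in X$, and let $U$ witness $(\star)$ at $x$. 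Since $\phi$ is injective (being strictly increasing) and $\{m : s_m \in U\}$ is finite, the set $S := \phi^{-1}(\{ m : s_m \in U \})$ is finite; let $\psi \in \mathbb S$ be the increasing enumeration of $\mathbb N \setminus S$, so that $\psi \in \mathcal C$. Then for every $n$ we have $\psi(n) \notin S$, so $\phi(\psi(n)) \notin \{ m : s_m \in U \}$, i.e.\ $s_{\phi(\psi(n))} \notin U$, which is exactly the witness required by the definition of $\mathrm{SDS}^{\mathcal C}_X$.

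Finally I would unpack $(\star)$. Assuming $(\star)$: for $y \in X$ pick $U$ witnessing $(\star)$ at $y$; then $\{ n : s_n = y \} \subseteq \{ n : s_n \in U \}$ is finite, so $s$ is finite-to-one; and for $x \in X$ the same kind of witness $U$ gives $\{ s_n : n \in \mathbb N \} \cap U = \{ s_n : s_n \in U \}$, the image of a finite set of indices under $s$, hence finite, so $\{ s_n : n \in \mathbb N\}$ is locally finite. Conversely, if $s$ is finite-to-one and $\{ s_n : n \in \mathbb N \}$ is locally finite, then given $x \in X$ choose $U \in \mathcal N_{X,x}$ with $\{ s_n : n \in \mathbb N \} \cap U$ finite, say equal to $\{ a_1, \dots, a_k \}$; then $\{ n : s_n \in U \} = \bigcup_{i=1}^k \{ n : s_n = a_i \}$ is a finite union of finite sets, so $(\star)$ holds.

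There is no genuine obstacle here; the only step that deserves a moment's care is checking that the ``thinning'' map $\psi$ used in the converse direction can always be taken inside $\mathcal C$, and this holds precisely because deleting a finite subset of $\mathbb N$ and re-enumerating it increasingly produces exactly an element of $\mathcal C$.
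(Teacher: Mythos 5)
Your proof is correct and follows essentially the same route as the paper's: both directions hinge on the observation that a $\mathcal C$-subsequence avoiding $U$ forces $\{n : s_n \in U\}$ into a finite set, and conversely that a finite set of bad indices can be skipped by an element of $\mathcal C$. Your explicit isolation of the intermediate condition $(\star)$ and your choice of thinning map $\psi$ (enumerating the complement of $\phi^{-1}$ of the bad set, rather than the paper's tail shift $\psi(n) = M+n$) are only cosmetic variations.
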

\begin{proof}
	First, suppose \(s \in \mathrm{SDS}_X^{\mathcal C}\) and let \(x \in X\) be arbitrary.
	By the defining feature of \(\mathrm{SDS}_X^{\mathcal C}\), we can find \(U \in \mathcal N_{X,x}\)
	and some \(\psi \in \mathcal C\) such that \(s \circ \psi(n) \not\in U\) for any \(n \in \mathbb N\).
	Note then that
	\[\{ n \in \mathbb N : s_n \in U \} \subseteq C_\phi,\]
	establishing that \(\{s_n:n\in\mathbb N\}\) is locally finite.
	
	For the fact that \(s\) is finite-to-one, consider \(t \in X^{\mathbb N}\) such that there is some \(\varsigma \in \mathbb S\)
	where \(t \circ \varsigma\) is constant.
	Consider \(x := t \circ \varsigma(1)\) and let \(U \in \mathcal N_{X,x}\) and \(\psi \in \mathcal C\) be arbitrary.
	Note that, since \(\psi\) has cofinite range and \(\varsigma\) has infinite range, there are \(n,m \in \mathbb N\)
	such that \(\psi(n) = \varsigma(m)\).
	Then \(t \circ \psi(n) = t \circ \varsigma(m) = x \in U\).
	Hence, \(t \not\in \mathrm{SDS}_X^{\mathcal C}\).
	
	Now suppose we have \(s \in X^{\mathbb N}\) with the properties that \(s\) is finite-to-one and \(A := \{ s_n : n \in \mathbb N \}\) is locally finite.
	Let \(\phi \in \mathcal C\) and \(x \in X\).
	Then we can let \(U \in \mathcal N_{X,x}\) be such that \(\#\{ y \in A : y \in U \} < \aleph_0\).
	Since \(s\) is finite-to-one, \(\#\{ n \in \mathbb N : s_n \in U \} < \aleph_0\).
	So let
	\[M = \max \{ n \in \mathbb N : s_n \in U \}.\]
	Then we can define \(\psi : \mathbb N \to \mathbb N\) by \(\psi(n) = M+n\).
	Note that \(\psi \in \mathbb S\) and that, for every \(n \in \mathbb N\),
	\[M < \psi(n) \leq \phi(\psi(n)) \implies s \circ \phi \circ \psi(n) \not\in U.\]
	Since \(\psi \in \mathcal C\), we see that \(s \in \mathrm{SDS}_X^{\mathcal C}\).
\end{proof}
\begin{corollary}
	For any space \(X\),
	\[
		\vec{\mathsf{G}}_1(\mathscr T_X , \mathrm{CD}_X^\ast)
		\leq_{\mathrm{II}}^+ \vec{\mathsf{G}}_1(\mathscr T_X , \mathrm{SDS}_X^{\mathcal C})
		\leq_{\mathrm{II}}^+ \vec{\mathsf{G}}_1(\mathscr T_X , \mathrm{SDS}_X).
	\]
	If \(X\) is assumed to be \(T_1\), then
	\[\vec{\mathsf{G}}_1(\mathscr T_X , \mathrm{SDS}_X^{\mathcal C}) \leq_{\mathrm{II}}^+ \mathsf{G}_1(\mathscr T_X , \mathrm{CD}_X).\]
\end{corollary}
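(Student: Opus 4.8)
The plan is to prove the three constituent inequalities separately (the displayed chain then follows from transitivity of $\leq^+_{\mathrm{II}}$), and in each case to reduce to an inclusion of winning-condition sets and then invoke Theorem \ref{thm:TranslationTheorem} with the identity translation maps $\overleftarrow{\mathsf T}_{\mathrm I,n} = \mathrm{id}_{\mathscr T_X}$ and $\overrightarrow{\mathsf T}_{\mathrm{II},n}(x,U) = x$. Since these maps are independent of $n$, the theorem yields $\leq^+_{\mathrm{II}}$ rather than merely $\leq_{\mathrm{II}}$. All three inclusions are read off from Proposition \ref{prop:LocallyFinite}, which identifies $\mathrm{SDS}_X^{\mathcal C}$ with the collection of finite-to-one sequences whose range is locally finite.

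For the first inequality, I would verify $\mathrm{CD}_X^\ast \subseteq \mathrm{SDS}_X^{\mathcal C}$. If $s \in \mathrm{CD}_X^\ast$, then $s$ is injective, hence finite-to-one, and its range $\{s_n : n \in \mathbb N\}$, being closed and relatively discrete, is locally finite: a point outside the range has $X \setminus \{s_n : n \in \mathbb N\}$ as a witnessing neighborhood, and each $s_j$ has a neighborhood meeting the range only in $\{s_j\}$. Proposition \ref{prop:LocallyFinite} then places $s$ in $\mathrm{SDS}_X^{\mathcal C}$. For the second inequality, I would verify $\mathrm{SDS}_X^{\mathcal C} \subseteq \mathrm{SDS}_X$: given $s \in \mathrm{SDS}_X^{\mathcal C}$ and $\phi \in \mathbb S$, if $s \circ \phi$ converged to some $x$, then any neighborhood $U$ of $x$ would contain $s_{\phi(n)}$ for all large $n$, and hence, by finite-to-oneness of $s$, infinitely many points of the range, contradicting local finiteness; so every subsequence of $s$ diverges and $s \in \mathrm{SDS}_X$.

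For the final inequality, the hypothesis that $X$ is $T_1$ enters exactly once, to promote ``locally finite'' to ``closed and relatively discrete.'' If $A \subseteq X$ is locally finite and $X$ is $T_1$, then for a point $x$ a witnessing neighborhood $U$ with $U \cap A$ finite may be shrunk to the open set $U \setminus (U \cap A \setminus \{x\})$ (finite sets being closed), which exhibits relative discreteness when $x \in A$ and closedness of $A$ when $x \notin A$. Hence, for $T_1$ $X$, Proposition \ref{prop:LocallyFinite} gives $\{s_n : n \in \mathbb N\} \in \mathrm{CD}_X$ for every $s \in \mathrm{SDS}_X^{\mathcal C}$. Since the winning condition of $\mathsf G_1(\mathscr T_X, \mathrm{CD}_X)$ depends only on the range of P2's play, this last inequality follows exactly as in Remark \ref{remark:ObviousCD}; equivalently, one identifies $\mathsf G_1(\mathscr T_X, \mathrm{CD}_X)$ with $\vec{\mathsf G}_1(\mathscr T_X, \{ s \in X^{\mathbb N} : \{s_n : n \in \mathbb N\} \in \mathrm{CD}_X \})$ and applies Theorem \ref{thm:TranslationTheorem} with identity translations and the inclusion just established.

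Being a corollary, no step is a genuine obstacle; the two points that warrant care are (i) that the translation maps are constant in $n$, so that one obtains the strengthened relation $\leq^+_{\mathrm{II}}$ including the predetermined- and constant-strategy clauses, and (ii) locating precisely where $T_1$ is used, namely that ``locally finite $\Rightarrow$ closed and relatively discrete'' can fail in general but holds under $T_1$, which is exactly why that separation axiom is attached only to the third inequality and not to the first two.
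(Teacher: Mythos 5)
Your proposal is correct and follows essentially the same route as the paper: the paper's proof simply notes the inclusions \(\mathrm{CD}_X^\ast \subseteq \mathrm{SDS}_X^{\mathcal C} \subseteq \mathrm{SDS}_X\) for the displayed chain and, for the \(T_1\) case, that a countable locally finite set is closed and relatively discrete. You have merely filled in the verifications of these inclusions (via Proposition \ref{prop:LocallyFinite}) and made explicit the appeal to Theorem \ref{thm:TranslationTheorem} with identity translations, all of which is accurate.
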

\begin{proof}
	For the first inequality, it suffices to note that \(\mathrm{CD}_X^\ast \subseteq \mathrm{SDS}_X^{\mathcal C} \subseteq \mathrm{SDS}_X\).
	For the second inequality, note that, in a \(T_1\) space, any countable set of points which is locally finite is actually closed and relatively discrete.
\end{proof}

We show that the technique in the proof of \cite[Theorem 2.6]{BellaSpadaro} extends well beyond the original context to cover strategic transferral,
the WSHD property, and the \(\mathcal P_{\mathrm{fin}}(X)\) context.
\begin{theorem} \label{thm:HyperSDS}
	Let \(\mathcal S\) be a subsemigroup of \(\mathbb S\) and \(\mathcal E \subseteq \mathbb S\).
	For a space \(Y\), let
	\[
		\mathfrak T(\mathcal E, \mathcal S)_Y = \left\{ s \in Y^{\mathbb N} : \exists \phi \in \mathcal E\ \left(s \circ \phi \in \mathrm{SDS}_Y^{\mathcal S}\right) \right\}.
	\]
	Then, for \(\mathbf H(X) \in \{ \mathcal P_{\mathrm{fin}}(X), \mathcal F[X] \}\),
	\[
		\vec{\mathsf G}_1(\mathscr T_X, \mathfrak T(\mathcal E, \mathcal S)_X) \leq_{\mathrm{II}}^+
		\vec{\mathsf G}_1(\mathscr T_{\mathbf H(X)} , \mathfrak T(\mathcal E, \mathcal S)_{\mathbf H(X)}).
	\]
	
	Consequently, for any \(\mathfrak B \in \{ \mathrm{SDS}, \mathrm{wSD}, \mathrm{gSDS} \} \cup \{ \mathrm{wSD}^\alpha : \alpha \in (0,1] \}\),
	\[
		\vec{\mathsf G}_1(\mathscr T_X, \mathfrak B_X) \leq_{\mathrm{II}}^+
		\vec{\mathsf G}_1(\mathscr T_{\mathbf H(X)} , \mathfrak B_{\mathbf H(X)}).
	\]
\end{theorem}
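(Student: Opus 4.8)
The plan is to realize the displayed inequality as an instance of the Translation Theorem (Theorem~\ref{thm:TranslationTheorem}), with the ``pushing-up'' content supplied by Lemma~\ref{lem:PushingSDS}. I would apply Theorem~\ref{thm:TranslationTheorem} with $\mathcal A=\mathscr T_X$, $\mathcal B=\mathscr T_{\mathbf H(X)}$, $\mathcal C=\mathfrak T(\mathcal E,\mathcal S)_X$, and $\mathcal D=\mathfrak T(\mathcal E,\mathcal S)_{\mathbf H(X)}$; the main work is to construct a single map $\overleftarrow{\mathsf T}_{\mathrm I}\colon\mathscr T_{\mathbf H(X)}\to\mathscr T_X$ (one map, used in every round, so that the sharper $\leq_{\mathrm{II}}^+$ is obtained) together with response translations $\overrightarrow{\mathsf T}_{\mathrm{II},n}$.

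For the translations, fix for each nonempty open $W\subseteq\mathbf H(X)$ a basic open subset of $W$ --- of the form $[U_1^W,\dots,U_{m_W}^W]$ when $\mathbf H(X)=\mathcal P_{\mathrm{fin}}(X)$, and of the form $[F^W,U^W]$ when $\mathbf H(X)=\mathcal F[X]$ --- together with a nonempty finite ``remainder'' set $\widetilde F_W\subseteq X$: in the Vietoris case, pick in advance a point $p_j^W\in U_j^W$ for each $j$ and put $\widetilde F_W=\{p_1^W,\dots,p_{m_W}^W\}$; in the Pixley--Roy case, put $\widetilde F_W=F^W$. Then set $\overleftarrow{\mathsf T}_{\mathrm I}(W)=U_1^W$ (respectively $U^W$) and $\overrightarrow{\mathsf T}_{\mathrm{II},n}(x,W)=\widetilde F_W\cup\{x\}$. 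A routine check shows that whenever $x\in\overleftarrow{\mathsf T}_{\mathrm I}(W)$ the set $\widetilde F_W\cup\{x\}$ lies in the chosen basic subset of $W$ (it is a nonempty finite subset of $X$ that meets every $U_j^W$ and is contained in their union, resp.\ that contains $F^W$ and is contained in $U^W$), hence in $W$; this is the first hypothesis of Theorem~\ref{thm:TranslationTheorem}.

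The second hypothesis is exactly where Lemma~\ref{lem:PushingSDS} is used. Suppose $x_n\in\overleftarrow{\mathsf T}_{\mathrm I}(W_n)$ for all $n$ and $\langle x_n:n\in\mathbb N\rangle\in\mathfrak T(\mathcal E,\mathcal S)_X$, and fix $\phi\in\mathcal E$ with $\langle x_{\phi(n)}:n\in\mathbb N\rangle\in\mathrm{SDS}_X^{\mathcal S}$. Writing $G_n=\overrightarrow{\mathsf T}_{\mathrm{II},n}(x_n,W_n)=\widetilde F_{W_n}\cup\{x_n\}$ and applying Lemma~\ref{lem:PushingSDS} (with the subsemigroup $\mathcal S$) to the sequences $\langle x_{\phi(n)}:n\in\mathbb N\rangle$ and $\langle\widetilde F_{W_{\phi(n)}}:n\in\mathbb N\rangle$, I obtain $\langle G_{\phi(n)}:n\in\mathbb N\rangle\in\mathrm{SDS}_{\mathbf H(X)}^{\mathcal S}$, so $\phi$ witnesses $\langle G_n:n\in\mathbb N\rangle\in\mathfrak T(\mathcal E,\mathcal S)_{\mathbf H(X)}$. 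Since $\overleftarrow{\mathsf T}_{\mathrm I}$ is round-independent, Theorem~\ref{thm:TranslationTheorem} then yields $\vec{\mathsf G}_1(\mathscr T_X,\mathfrak T(\mathcal E,\mathcal S)_X)\leq_{\mathrm{II}}^+\vec{\mathsf G}_1(\mathscr T_{\mathbf H(X)},\mathfrak T(\mathcal E,\mathcal S)_{\mathbf H(X)})$.

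For the stated consequence, three of the four operators are special cases of the above: using $\mathrm{SDS}_Y^{\mathbb S}=\mathrm{SDS}_Y$, one has $\mathrm{SDS}_Y=\mathfrak T(\{\mathrm{id}_{\mathbb N}\},\mathbb S)_Y$, $\mathrm{wSD}_Y=\mathfrak T(\mathbb S,\mathbb S)_Y$, and $\mathrm{wSD}^\alpha_Y=\mathfrak T(\mathbf D_\alpha^\ast,\mathbb S)_Y$. The remaining case $\mathfrak B=\mathrm{gSDS}$ does not fit the $\mathfrak T$-template because of its ``$\forall^\ast$'' quantifier, and I expect this to be the one point requiring a separate (short) argument: with the same translations, the monotonicity in Lemma~\ref{lem:PushingSDS} taken with $\mathcal S=\mathbb S$ gives, for each fixed choice of data, $\{\phi\in\mathbb S:\langle x_{\phi(n)}:n\in\mathbb N\rangle\in\mathrm{SDS}_X\}\subseteq\{\phi\in\mathbb S:\langle G_{\phi(n)}:n\in\mathbb N\rangle\in\mathrm{SDS}_{\mathbf H(X)}\}$, so comeagerness of the left-hand set forces comeagerness of the right-hand one; this is precisely the second hypothesis of Theorem~\ref{thm:TranslationTheorem} with $\mathcal C=\mathrm{gSDS}_X$ and $\mathcal D=\mathrm{gSDS}_{\mathbf H(X)}$. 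The only genuine care needed is the bookkeeping above --- attaching the chosen basic subset and its remainder to each \emph{open set} rather than to each round --- which keeps $\overleftarrow{\mathsf T}_{\mathrm I}$ round-independent and upgrades the conclusion to $\leq_{\mathrm{II}}^+$.
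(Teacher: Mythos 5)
Your proposal is correct and follows essentially the same route as the paper: both fix, for each open $W\subseteq\mathbf H(X)$, a basic neighborhood together with a finite set inside it, define round-independent translations $\overleftarrow{\mathsf T}_{\mathrm I}$ and $\overrightarrow{\mathsf T}_{\mathrm{II}}$ sending $x\mapsto F_W\cup\{x\}$, invoke Lemma~\ref{lem:PushingSDS} to verify the second hypothesis of Theorem~\ref{thm:TranslationTheorem}, and handle $\mathrm{gSDS}$ by the same comeagerness-transfer observation. The only (immaterial) differences are that the paper takes $\overleftarrow{\mathsf T}_{\mathrm I}(W)$ to be the union $\bigcup_j U_{W,j}$ rather than a single $U_1^W$ in the Vietoris case, and fixes an element $F_W$ of the basic neighborhood rather than assembling one from chosen points $p_j^W$.
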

\begin{proof}
	We proceed by cases.
	
	We start with the case for \(\mathcal P_{\mathrm{fin}}(X)\).
	Fix, for each open subset \(W\) of \(\mathcal P_{\mathrm{fin}}(X)\), a basic neighborhood contained in \(W\).
	Let this choice be realized by open subsets \(\{ U_{W,1}, \ldots, U_{W,m_W} \}\) of \(X\).
	For each \(W \in \mathscr T_{\mathcal P_{\mathrm{fin}}(X)}\), let \(F_W \in \mathcal P_{\mathrm{fin}}(X)\) be such that
	\[F_W \in [U_{W,1}, \ldots, U_{W,m_W}] \subseteq W.\]
	
	Now, define \(\overleftarrow{\mathsf T}_{\mathrm{I}} : \mathscr T_{\mathcal P_{\mathrm{fin}}(X)} \to \mathscr T_X\) by
	\[\overleftarrow{\mathsf T}_{\mathrm{I}}(W) = \bigcup_{j=1}^{n_W} U_{W,j}.\]
	Then define \(\overrightarrow{\mathsf T}_{\mathrm{II}} : X \times \mathscr T_{\mathcal P_{\mathrm{fin}}(X)} \to \mathcal P_{\mathrm{fin}}(X)\) by
	\[\overrightarrow{\mathsf T}_{\mathrm{II}}(x, W) = F_W \cup \{x\}.\]
	
	Now consider an arbitrary sequence \(\langle W_n : n \in \mathbb N \rangle\) of \(\mathscr T_{\mathcal P_{\mathrm{fin}}(X)}\).
	Then suppose we have \(x_n \in \overleftarrow{\mathsf T}_{\mathrm{I}}(W_n)\) for each \(n \in \mathbb N\)
	such that \(s := \langle x_n : n \in \mathbb N \rangle \in \mathfrak T(\mathcal E, \mathcal S)_X\).
	Then we can let \(\phi \in \mathcal E\) be such that \(s \circ \phi \in \mathrm{SDS}_X^{\mathcal S}\).
	By Lemma \ref{lem:PushingSDS},
	\[\left\langle \overrightarrow{\mathsf T}_{\mathrm{II}}(x_{\phi(n)}, W_{\phi(n)}) : n \in \mathbb N \right\rangle
	\in \mathrm{SDS}^{\mathcal S}_{\mathcal P_{\mathrm{fin}}(X)}.\]
	Thus,
	\[\left\langle \overrightarrow{\mathsf T}_{\mathrm{II}}(x_{n}, W_{n}) : n \in \mathbb N \right\rangle
	\in \mathfrak T(\mathcal E, \mathcal S)_{\mathcal P_{\mathrm{fin}}(X)}.\]
	Then, applying Theorem \ref{thm:TranslationTheorem} establishes that
	\[
		\vec{\mathsf G}_1(\mathscr T_X, \mathfrak T(\mathcal E, \mathcal S)_X) \leq_{\mathrm{II}}^+
		\vec{\mathsf G}_1(\mathscr T_{\mathcal P_{\mathrm{fin}}(X)} , \mathfrak T(\mathcal E, \mathcal S)_{\mathcal P_{\mathrm{fin}}(X)}).
	\]

	Now we attend to the case for \(\mathcal F[X]\).
	For every \(W \in \mathscr T_{\mathcal F[X]}\), fix \(F_W \in \mathcal F[X]\) and \(U_W \in \mathscr T_X\)
	such that \([F_W,U_W] \subseteq W\).
	Now define \(\overleftarrow{\mathsf T}_{\mathrm{I}} : \mathscr T_{\mathcal F[X]} \to \mathscr T_X\) by
	\[\overleftarrow{\mathsf T}_{\mathrm{I}}(W) = U_W.\]
	Then define \(\overrightarrow{\mathsf T}_{\mathrm{II}} : X \times \mathscr T_{\mathcal F[X]} \to \mathcal F[X]\) by
	\[\overrightarrow{\mathsf T}_{\mathrm{II}}(x, W) = F_W \cup \{x\}.\]
	
	Consider an arbitrary sequence \(\langle W_n : n \in \mathbb N \rangle\) of \(\mathscr T_{\mathcal F[X]}\).
	Suppose we have \(x_n \in \overleftarrow{\mathsf T}_{\mathrm{I}}(W_n)\) for each \(n \in \mathbb N\)
	such that \(s := \langle x_n : n \in \mathbb N \rangle \in \mathfrak T(\mathcal E, \mathcal S)_X\).
	Then we can let \(\phi \in \mathcal E\) be such that \(s \circ \phi \in \mathrm{SDS}_X^{\mathcal S}\).
	By Lemma \ref{lem:PushingSDS},
	\[\left\langle \overrightarrow{\mathsf T}_{\mathrm{II}}(x_{\phi(n)}, W_{\phi(n)}) : n \in \mathbb N \right\rangle
	\in \mathrm{SDS}^{\mathcal S}_{\mathcal F[X]}.\]
	Thus,
	\[\left\langle \overrightarrow{\mathsf T}_{\mathrm{II}}(x_{n}, W_{n}) : n \in \mathbb N \right\rangle
	\in \mathfrak T(\mathcal E, \mathcal S)_{\mathcal F[X]}.\]
	Then, applying Theorem \ref{thm:TranslationTheorem} establishes that
	\[
		\vec{\mathsf G}_1(\mathscr T_X, \mathfrak T(\mathcal E, \mathcal S)_X) \leq_{\mathrm{II}}^+
		\vec{\mathsf G}_1(\mathscr T_{\mathcal F[X]} , \mathfrak T(\mathcal E, \mathcal S)_{\mathcal F[X]}).
	\]
		
	For the remaining claim in the statement of the theorem, note that,
	\begin{itemize}
		\item
		when \(\mathcal E = \{ \mathrm{id} \}\), where \(\mathrm{id}\) is the identity mapping \(\mathbb N \to \mathbb N\),
		and \(\mathcal S = \mathbb S\), \(\mathfrak T(\mathcal E, \mathcal S) = \mathrm{SDS}\);
		\item
		when \(\mathcal E = \mathbb S\) and \(\mathcal S = \mathbb S\), \(\mathfrak T(\mathcal E, \mathcal S) = \mathrm{wSD}\); and
		\item
		when \(\mathcal E = \mathbf D_\alpha^\ast\) and \(\mathcal S = \mathbb S\), \(\mathfrak T(\mathcal E, \mathcal S) = \mathrm{wSD}^\alpha\).
	\end{itemize}
	The case for \(\mathrm{gSDS}\) follows using the same arguments above, and just noting that
	Lemma \ref{lem:PushingSDS} guarantees that a comeager subset of \(\phi \in \mathbb S\) for which
	\(\langle x_{\phi(n)} : n \in \mathbb N \rangle \in \mathrm{SDS}_X\) will also witness that the translated
	selections are in the corresponding \(\mathrm{SDS}_{\mathbf H(X)}\).
\end{proof}
\begin{corollary}
	If a space \(X\) is SHD (resp., WSHD), then \(\mathcal P_{\mathrm{fin}}(X)\) is SHD (resp., WSHD).
\end{corollary}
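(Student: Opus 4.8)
The plan is to obtain this as an immediate consequence of Theorem \ref{thm:HyperSDS} together with the selection-principle reformulations of SHD and WSHD recorded in Section \ref{sec:Subsequences}. First I would recall that, for any space \(Y\), being SHD is equivalent to \(Y \models \vec{\mathsf S}_1(\mathscr T, \mathrm{SDS})\) and being WSHD is equivalent to \(Y \models \vec{\mathsf S}_1(\mathscr T, \mathrm{wSD})\); moreover, by Remark \ref{remark:LindelofAndSelection}, the principle \(\vec{\mathsf S}_1(\mathcal A, \mathcal B)\) is precisely the assertion \(\mathrm{I} \underset{\mathrm{pre}}{\not\uparrow} \vec{\mathsf G}_1(\mathcal A, \mathcal B)\). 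Thus both properties can be phrased as the failure of a predetermined winning strategy for P1 in a sequential single-selection game.

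Next I would instantiate the ``consequently'' clause of Theorem \ref{thm:HyperSDS} with \(\mathbf H(X) = \mathcal P_{\mathrm{fin}}(X)\) and with \(\mathfrak B = \mathrm{SDS}\) (for the SHD claim), respectively \(\mathfrak B = \mathrm{wSD}\) (for the WSHD claim). In either case this yields
\[
	\vec{\mathsf G}_1(\mathscr T_X, \mathfrak B_X) \leq_{\mathrm{II}}^+ \vec{\mathsf G}_1(\mathscr T_{\mathcal P_{\mathrm{fin}}(X)}, \mathfrak B_{\mathcal P_{\mathrm{fin}}(X)}).
\]
In particular the relation \(\leq_{\mathrm{II}}\) holds, whose predetermined-strategy clause propagates \(\mathrm{I} \underset{\mathrm{pre}}{\not\uparrow} \vec{\mathsf G}_1(\mathscr T_X, \mathfrak B_X)\) up to \(\mathrm{I} \underset{\mathrm{pre}}{\not\uparrow} \vec{\mathsf G}_1(\mathscr T_{\mathcal P_{\mathrm{fin}}(X)}, \mathfrak B_{\mathcal P_{\mathrm{fin}}(X)})\).

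Finally I would chain the equivalences. If \(X\) is SHD, then \(X \models \vec{\mathsf S}_1(\mathscr T, \mathrm{SDS})\), i.e. \(\mathrm{I} \underset{\mathrm{pre}}{\not\uparrow} \vec{\mathsf G}_1(\mathscr T_X, \mathrm{SDS}_X)\); the displayed inequality then gives \(\mathrm{I} \underset{\mathrm{pre}}{\not\uparrow} \vec{\mathsf G}_1(\mathscr T_{\mathcal P_{\mathrm{fin}}(X)}, \mathrm{SDS}_{\mathcal P_{\mathrm{fin}}(X)})\), which is exactly \(\mathcal P_{\mathrm{fin}}(X) \models \vec{\mathsf S}_1(\mathscr T, \mathrm{SDS})\), i.e. \(\mathcal P_{\mathrm{fin}}(X)\) is SHD. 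The WSHD case is verbatim the same with \(\mathrm{wSD}\) replacing \(\mathrm{SDS}\) throughout. I do not expect any genuine obstacle here: all of the substantive work — the appending-points construction and its interaction with subsemigroups of \(\mathbb S\) — has already been carried out in Lemma \ref{lem:PushingSDS} and Theorem \ref{thm:HyperSDS}, and the only point requiring a little care is correctly translating the game-inequality into the selection-principle language in which SHD and WSHD are defined.
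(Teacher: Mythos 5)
Your proposal is correct and is exactly the derivation the paper intends: the corollary is stated without proof as an immediate consequence of Theorem \ref{thm:HyperSDS}, obtained precisely by instantiating the ``consequently'' clause with \(\mathbf H(X) = \mathcal P_{\mathrm{fin}}(X)\) and \(\mathfrak B \in \{\mathrm{SDS}, \mathrm{wSD}\}\) and reading off the predetermined-strategy clause of \(\leq_{\mathrm{II}}\) via Remark \ref{remark:LindelofAndSelection} and the selection-principle reformulations of SHD and WSHD. No gaps; you also correctly read the intended meaning of the predetermined-strategy clause in the definition of \(\leq_{\mathrm{II}}\).
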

Note that the inequality in Theorem \ref{thm:HyperSDS} doesn't generally reverse in the context of \(\mathcal F[X]\).
\begin{example}
	For any uncountable cardinal \(\kappa\), \(\mathcal F[\Sigma(2,0,\kappa)]\) is SHD (\cite[Theorem 5]{RemarksOnSHD}) but \(\Sigma(2,0,\kappa)\) is not SHD
	since it is sequentially compact (as discussed in Example \ref{example:SigmaProduct}).
	So it is not generally the case that \(\vec{\mathsf G}_1(\mathscr T_{\mathcal F[X]} , \mathrm{SDS}_{\mathcal F[X]})
	\leq_{\mathrm{II}} \vec{\mathsf G}_1(\mathscr T_{X} , \mathrm{SDS}_{X})\).
\end{example}
However, we have not identified any similar example for the \(\mathcal P_{\mathrm{fin}}(X)\) context.
\begin{question} \label{question:PfinEquivalence}
	Is it possible that \(\vec{\mathsf G}_1(\mathscr T_{\mathcal P_{\mathrm{fin}}(X)} , \mathrm{SDS}_{\mathcal P_{\mathrm{fin}}(X)})
	\leq_{\mathrm{II}} \vec{\mathsf G}_1(\mathscr T_{X} , \mathrm{SDS}_{X})\)?
	If so, to what level of generality can we take the \(\mathrm{SDS}\) operator and maintain the inequality?
\end{question}
One obstacle in an attempted proof of such an inequality is that
simply assuming that \(\langle F_n : n \in \mathbb N \rangle \in \mathrm{SDS}_{\mathcal P_{\mathrm{fin}}(X)}\) does not guarantee a way to select
\(x_n \in F_n\) for each \(n \in \mathbb N\) so that \(\langle x_n : n \in \mathbb N \rangle \in \mathrm{SDS}_X\).
Indeed, consider \(X = [0,1]\) and, for each \(n \in \mathbb N\),
let \(F_n = \left\{ \frac{j}{2^n} : 0 \leq j \leq 2^n \right\}\) and observe that
\(\langle F_n : n \in \mathbb N \rangle \in \mathrm{SDS}_{\mathcal P_{\mathrm{fin}}(X)}\).
However, since \(X\) is sequentially compact, any selection \(x_n \in F_n\) for each \(n \in \mathbb N\)
has the property that \(\langle x_n : n \in \mathbb N \rangle\) has a convergent subsequence.

Of course, an affirmative answer to the following would provide a negative answer to Question \ref{question:PfinEquivalence}.
\begin{question}
	Is there some uncountable cardinal \(\kappa\) for which \(\mathcal P_{\mathrm{fin}}(\Sigma(2,0,\kappa))\) is SHD?
\end{question}

What about a more direct relationship between the hyperspaces?
\begin{question}
	In the context of Theorem \ref{thm:HyperSDS},
	is it true that
	\[\vec{\mathsf G}_1(\mathscr T_{\mathcal P_{\mathrm{fin}}(X)}, \mathfrak T(\mathcal E, \mathcal S)_{\mathcal P_{\mathrm{fin}}(X)}) \leq_{\mathrm{II}}^+
	\vec{\mathsf G}_1(\mathscr T_{\mathcal F[X]} , \mathfrak T(\mathcal E, \mathcal S)_{\mathcal F[X]})?\]
\end{question}
One approach for establishing a strategic translation is as follows.
For a basic open subset \([F_n,U_n]\) of \(\mathcal F[X]\), we can choose \(U_{n,x} \in \mathcal N_{X,x}\), \(U_{n,x} \subseteq U_n\), for each \(x \in F_n\) and form
\([U_{n,x} : x \in F]\), a basic open subset of \(\mathcal P_{\mathrm{fin}}(X)\).
Given some \(H_n \in [U_{n,x} : x \in F]\), we can then push forward to \(G_n := F_n \cup H_n\).
Note that \(G_n \in [F_n,U_n]\).
However, just under the assumption that \(\langle H_n : n \in \mathbb N \rangle\) has some divergence property, it is not clear
that we can guarantee that \(\langle G_n : n \in \mathbb N \rangle\) must also have that divergence property.

\subsection{Consequences for Rings of Continuous Functions}

\begin{definition}
	For a space \(X\), we say that a family \(\mathcal I\) of nonempty closed subsets of \(X\) is a \emph{proper ideal of closed sets} if
	\begin{itemize}
		\item
		every \(A \in \mathcal I\) is a proper subset of \(X\),
		\item
		for \(A , B \in \mathcal I\), \(A \cup B \in\mathcal I\), and
		\item
		\(\{ \{x\} : x \in X \} \subseteq \mathcal I\).
	\end{itemize}
\end{definition}
\begin{definition}
	Let \(X\) be a space and \(\mathcal I\) be a proper ideal of closed subsets of \(X\).
	We say that \(X\) is \emph{functionally \(\mathcal I\)-normal} if, for \(A \in \mathcal I\)
	and \(U \subsetneq X\) open with \(A \subseteq U\), there exists a continuous function \(f : X \to \mathbb R\)
	so that \(f[A] = \{0\}\) and \(f[X \setminus U] = \{1\}\).
\end{definition}
Note that, if \(X\) is Tychonoff and \(\mathcal I\) is an ideal that consists of compact sets, then \(X\) is functionally \(\mathcal I\)-normal.

Topologies relative to uniform convergence on a select class of subsets are well-known and studied in depth.
For a general development of uniform convergence topologies and relevant commentary, see \cite{Osipov,OsipovCoincidence};
for recent developments concerning uniform convergence on not necessarily closed subsets, see \cite{UniformLindelof,UniformSeparable}.
\begin{definition}
	For a space \(X\) and a proper ideal of closed sets \(\mathcal I\), we let \(C_{\mathcal I}(X)\) denote the set \(C(X)\) of continuous
	real-valued functions on \(X\) with the topology generated by sets of the form
	\[[f; A, \varepsilon] = \{ g \in C(X) : \sup\{ |f(x) - g(x)| : x \in A\} < \varepsilon \},\]
	where \(f \in C(X)\), \(A \in \mathcal I\), and \(\varepsilon > 0\).
	That is, \(C_{\mathcal I}(X)\) has as its topology the topology of uniform convergence on \(\mathcal I\).
\end{definition}
\begin{remark}
	Note that \(C_{\mathcal I}(X)\) with pointwise addition forms a topological group.
\end{remark}
When \(\mathcal I\) is the ideal of
\begin{itemize}
	\item
	finite subsets, we use the notation \(C_p(X)\) instead of \(C_{\mathcal I}(X)\).
	\item
	compact subsets, we use the notation \(C_k(X)\) instead of \(C_{\mathcal I}(X)\).
\end{itemize}

Though some of the implications for Theorem \ref{thm:MainFunctionTheorem} are proved elsewhere in varying levels of generality
(see, for example, \cite{CHContinuousFunctions} and relevant commentary and references therein), we include proofs which avoid some notational overhead for the convenience of the reader.

Recall that a subset \(A\) of a space \(X\) is said to be \emph{\(\mathbb R\)-bounded} if, for every \(f \in C(X)\),
\(f[A]\) is bounded.
\begin{theorem} \label{thm:MainFunctionTheorem}
	Suppose \(X\) is a space and \(\mathcal I\) is a proper ideal of closed subsets of \(X\).
	If \(X\) is functionally \(\mathcal I\)-normal and \(\mathcal I\) consists of \(\mathbb R\)-bounded sets, then the following are equivalent:
	\begin{enumerate}[label=(\roman*),ref=(\roman*)]
		\item \label{SpaceNotA}
		For every \(\{ A_n : n \in \mathbb N \} \subseteq \mathcal I\), there exists \(E \in \mathcal I\) such that,
		for every \(n \in \mathbb N\), \(E \not\subseteq A_n\).
		\item \label{SpecialCoveringProperty}
		For every \(\{ A_n : n \in \mathbb N \} \subseteq \mathcal I\), there exist \(\{ U_n : n \in \mathbb N \} \subseteq \mathscr T_X\) and
		\(E \in \mathcal I\) such that \(E \not\subseteq U_n\) for any \(n \in \mathbb N\).
		\item \label{CNotFirstCountable}
		\(C_{\mathcal I}(X)\) is not first-countable.
		\item \label{CNotMetrizable}
		\(C_{\mathcal I}(X)\) is not metrizable.
		\item \label{CAIDS}
		\(C_{\mathcal I}(X)\) is injectively discretely selective.
		\item \label{CADS}
		\(C_{\mathcal I}(X)\) is discretely selective.
		\item \label{CASHD}
		\(C_{\mathcal I}(X)\) is SHD.
		\item \label{CAWSHD}
		\(C_{\mathcal I}(X)\) is WSHD.
	\end{enumerate}
\end{theorem}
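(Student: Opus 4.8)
The plan is to prove the implication cycle $\ref{SpaceNotA}\Rightarrow\ref{SpecialCoveringProperty}\Rightarrow\ref{CAIDS}\Rightarrow\ref{CADS}\Rightarrow\ref{CNotFirstCountable}\Rightarrow\ref{SpaceNotA}$, supplemented by the side implications $\ref{CAIDS}\Rightarrow\ref{CASHD}\Rightarrow\ref{CAWSHD}\Rightarrow\ref{CNotFirstCountable}$ and the equivalence $\ref{CNotFirstCountable}\Leftrightarrow\ref{CNotMetrizable}$; once these are in place, each listed statement lies on a common cycle of implications, so all eight are equivalent. Two facts will be used throughout. First, since $\{\{x\}:x\in X\}\subseteq\mathcal I$, the space $X$ is $T_1$ and $\{0\}=\bigcap\{[0;\{x\},\varepsilon]:x\in X,\ \varepsilon>0\}$, so $C_{\mathcal I}(X)$ is a Hausdorff topological group. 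Second, $C_{\mathcal I}(X)$ has no isolated points: for $A\in\mathcal I$ and $\varepsilon>0$, choosing $x\notin A$ makes $U:=X\setminus\{x\}$ a proper open superset of $A$, and functional $\mathcal I$-normality yields a nonzero $h\in C(X)$ with $h[A]=\{0\}$; then $\{\lambda h:\lambda\in\mathbb R\}\subseteq[0;A,\varepsilon]$, so $0$, and hence every point, fails to be isolated.

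For the cluster $\ref{SpaceNotA}$, $\ref{CNotFirstCountable}$, $\ref{CNotMetrizable}$: a countable subfamily $\{A_n:n\in\mathbb N\}\subseteq\mathcal I$ witnesses the failure of $\ref{SpaceNotA}$ precisely when it is cofinal in $(\mathcal I,\subseteq)$, and replacing $A_n$ by $A_1\cup\dots\cup A_n\in\mathcal I$ we may assume it increasing, in which case $\{[f;A_n,1/m]:n,m\in\mathbb N\}$ is a countable neighborhood base at $f$. Conversely, if $\{[0;A_n,\varepsilon_n]:n\in\mathbb N\}$ is a neighborhood base at $0$ and $A\in\mathcal I$, then $[0;A_n,\varepsilon_n]\subseteq[0;A,1]$ for some $n$, which forces $A\subseteq A_n$ — otherwise functional $\mathcal I$-normality produces $h$ with $h[A_n]=\{0\}$ and $|h|\geq1$ somewhere on $A$ — so $\{A_n\}$ is cofinal and $\ref{SpaceNotA}$ fails. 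Hence $\ref{SpaceNotA}\Leftrightarrow\ref{CNotFirstCountable}$, and $\ref{CNotFirstCountable}\Leftrightarrow\ref{CNotMetrizable}$ is the Birkhoff--Kakutani theorem for the topological group $C_{\mathcal I}(X)$. The implication $\ref{SpaceNotA}\Rightarrow\ref{SpecialCoveringProperty}$ is immediate: take $E\in\mathcal I$ with $E\not\subseteq A_n$ for all $n$, pick $x_n\in E\setminus A_n$, and put $U_n=X\setminus\{x_n\}$.

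The heart of the proof is $\ref{SpecialCoveringProperty}\Rightarrow\ref{CAIDS}$. Given nonempty open $W_n\subseteq C_{\mathcal I}(X)$, shrink each to a basic neighborhood $[f_n;A_n,\varepsilon_n]\subseteq W_n$ with $A_n\in\mathcal I$, apply $\ref{SpecialCoveringProperty}$ to $\{A_n\}$ to obtain proper open $U_n\supseteq A_n$ and $E\in\mathcal I$ with $E\not\subseteq U_n$, and fix $x_n\in E\setminus U_n$. Functional $\mathcal I$-normality provides $h_n\in C(X)$ with $h_n[A_n]=\{0\}$ and $h_n[X\setminus U_n]=\{1\}$; composing with $t\mapsto\max(0,\min(1,t))$ we may assume $0\leq h_n\leq1$, so that $h_n(x_n)=1$. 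Since $E$ is $\mathbb R$-bounded, $M_n:=\sup\{|f_n(y)|:y\in E\}$ is finite; choose positive reals $c_n$ recursively with $c_n>M_n+1+\max\{M_k+c_k:k<n\}$ and set $g_n=f_n+c_nh_n$. Then $g_n$ agrees with $f_n$ on $A_n$, so $g_n\in[f_n;A_n,\varepsilon_n]\subseteq W_n$, while for $k<n$ we have $|g_k(x_n)|\leq M_k+c_k$ and $g_n(x_n)=f_n(x_n)+c_n\geq c_n-M_n$, whence $\sup_{y\in E}|g_n(y)-g_k(y)|\geq g_n(x_n)-g_k(x_n)\geq1$. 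Consequently the $g_n$ are distinct, $\{g_n:n\in\mathbb N\}$ is relatively discrete (each $[g_n;E,1/2]$ isolates $g_n$) and closed in $C_{\mathcal I}(X)$ (a function within $1/4$ of two distinct $g_n$'s on $E$ would place those two within $1/2$ on $E$), so $\langle g_n:n\in\mathbb N\rangle\in\mathrm{CD}^\ast_{C_{\mathcal I}(X)}$. Now $\ref{CAIDS}\Rightarrow\ref{CADS}$ is Remark \ref{remark:ObviousCD}; $\ref{CAIDS}\Rightarrow\ref{CASHD}$ holds because an injective sequence with range in $\mathrm{CD}$ has no convergent subsequence; and $\ref{CASHD}\Rightarrow\ref{CAWSHD}$ is the inclusion $\mathrm{SDS}\subseteq\mathrm{wSD}$.

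To close the cycle: if $C_{\mathcal I}(X)$ is first-countable then, being a $T_1$ space without isolated points, it is not discretely selective by Proposition \ref{prop:CountableLocalPiBase}, giving $\ref{CADS}\Rightarrow\ref{CNotFirstCountable}$; moreover a decreasing countable local base $\{V_n\}$ at a non-isolated point $x$ makes every selection from the punctured sets $V_n\setminus\{x\}$ — over any infinite index set — converge to $x$, so $C_{\mathcal I}(X)$ is not WSHD, giving $\ref{CAWSHD}\Rightarrow\ref{CNotFirstCountable}$. The main obstacle is the construction in $\ref{SpecialCoveringProperty}\Rightarrow\ref{CAIDS}$: one must perturb the centers $f_n$ without leaving their prescribed basic neighborhoods while making the resulting functions uniformly bounded away from one another on a single fixed member of $\mathcal I$, and it is exactly the $\mathbb R$-boundedness of the sets in $\mathcal I$ (making $M_n$ finite) together with the recursive choice of the coefficients $c_n$ that makes this work; the rest is the clean dichotomy — either $\mathcal I$ has a countable cofinal subfamily, in which case $C_{\mathcal I}(X)$ is metrizable with no isolated points and hence not discretely selective, or it does not, in which case the above construction produces injective closed discrete selections.
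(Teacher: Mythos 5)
Your proof is correct, and its overall decomposition coincides with the paper's: the cycle (i) \(\Rightarrow\) (ii) \(\Rightarrow\) (v) \(\Rightarrow\) (vi) \(\Rightarrow\) (iii) \(\Rightarrow\) (i), the side chain (v) \(\Rightarrow\) (vii) \(\Rightarrow\) (viii) \(\Rightarrow\) (iii), and Birkhoff--Kakutani for (iii) \(\Leftrightarrow\) (iv). Two sub-arguments genuinely differ. For (ii) \(\Rightarrow\) (v), the paper uses functional \(\mathcal I\)-normality to build \(g_n\) agreeing with \(f_n\) on \(A_n\) and identically equal to \(n\) on \(X \setminus U_n\), then deduces local finiteness of \(\{g_n\}\) by comparing against \(\sup_E h + 1\); you instead take \(g_n = f_n + c_n h_n\) with \(0 \leq h_n \leq 1\) vanishing on \(A_n\) and equal to \(1\) off \(U_n\), with the coefficients \(c_n\) chosen recursively so that any two of the \(g_n\) are more than \(1\) apart in the sup-metric on \(E\). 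The constructions are close cousins, but yours makes the injectivity of \(n \mapsto g_n\) and the uniform pairwise separation explicit (the paper asserts injectivity "by construction" without spelling it out), at the cost of the recursive choice of the \(c_n\). For (viii) \(\Rightarrow\) (iii), the paper routes through Theorem \ref{thm:HyperSDS} (WSHD transfers to \(\mathcal F[C_{\mathcal I}(X)]\)) and then cites an external result from \cite{RemarksOnSHD}; you give a direct argument --- any selection from the punctured members of a decreasing countable local base at a non-isolated point converges to that point along every infinite index set, so a first-countable \(C_{\mathcal I}(X)\) cannot be WSHD --- which is a genuine simplification that keeps the theorem self-contained. You also supply details the paper glosses over (that \(C_{\mathcal I}(X)\) is a Hausdorff group without isolated points, and the unneeded but correct converse (i) \(\Rightarrow\) (iii)); all of these check out.
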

\begin{proof}
	Note that \ref{CAIDS}\(\implies\)\ref{CADS}, \ref{CAIDS}\(\implies\)\ref{CASHD}, and \ref{CASHD}\(\implies\)\ref{CAWSHD} are immediate.
	The equivalence of \ref{CNotFirstCountable} and \ref{CNotMetrizable} is a consequence of the well-known Birkhoff-Kakutani Theorem.
	
	\ref{CADS}\(\implies\)\ref{CNotFirstCountable}:
	Suppose \(C_{\mathcal I}(X)\) is first-countable, and note that \(\mathbf 0\) is not isolated.
	So, by Proposition \ref{prop:CountableLocalPiBase}, \(C_{\mathcal I}(X)\) is not discretely selective.
	
	\ref{CNotFirstCountable}\(\implies\)\ref{SpaceNotA}:
	By way of contrapositive, suppose there is \(\{A_n:n\in\mathbb N\} \subseteq \mathcal I\) such that, for each \(E \in \mathcal I\),
	there exists \(n \in \mathbb N\) with \(E \subseteq A_n\).
	Then note that \(\{ [\mathbf 0; A_n, 2^{-m}] : n,m \in \mathbb N \}\) forms a countable basis at \(\mathbf 0\), the constant zero function.
	That is, \(C_{\mathcal I}(X)\) is first-countable.
	
	\ref{SpaceNotA}\(\implies\)\ref{SpecialCoveringProperty}:
	Let \(\{ A_n : n \in \mathbb N \} \subseteq \mathcal I\) be arbitrary.
	Then let \(E \in \mathcal I\) be such that \(E \not\subseteq A_n\) for any \(n \in \mathbb N\).
	Choose \(x_n \in E \setminus A_n\) and let \(U_n = X \setminus \{x_n\}\) for each \(n\).
	Note that \(A_n \subseteq U_n\) and that \(E \not\subseteq U_n\) for any \(n \in \mathbb N\).
	
	\ref{SpecialCoveringProperty}\(\implies\)\ref{CAIDS}:
	Define \[\Phi : C(X) \times \mathcal I \times \mathscr T_X \times \mathbb N \to C(X)\] to be a choice such that,
	whenever \(A \subseteq U \subsetneq X\),
	\[\Phi(f,A,U,n)\restriction_{A} = f \restriction_A \text{ and } \Phi(f,A,U,n)[X \setminus U] = \{n\};\]
	whenever \(A \not\subseteq U\) or \(U = X\), \(\Phi(f,A,U,n) = f\).
	Now consider an arbitrary sequence \(\langle [f_n ; A_n, \varepsilon_n ] : n \in \mathbb N \rangle\)
	of basic neighborhoods of \(C_{\mathcal I}(X)\).
	By \ref{SpecialCoveringProperty}, let \(\{ U_n : n \in \mathbb N \} \subseteq \mathscr T_X\) and \(E \in \mathcal I\)
	have the guaranteed property.
	By construction, \(n \mapsto g_n := \Phi(f_n,A_n,U_n,\varepsilon_n)\), \(\mathbb N \to C_{\mathcal I}(X)\), is injective.
	So we need only verify that \(\{ g_n : n \in \mathbb N \}\) is closed and relatively discrete in \(C_{\mathcal I}(X)\).
	To do this, we show that \(\{ g_n : n \in \mathbb N \}\) is locally finite.
	
	So let \(h \in C(X)\) be arbitrary and let \(M \in \mathbb N\) be such that \(\sup\{ h(x) : x \in E \} + 1 < M\).
	For any \(n \geq M\), we can let \(x_0 \in E \setminus U_n\).
	Then note that \(h(x_0) + 1 < M \leq n = g_n(x_0)\).
	Hence, \(1 < \sup \{ |h(x) - g_n(x)| : x \in E \}\).
	Thus, \(\#\{n \in \mathbb N : g_n \in [ h; E, 1 ] \} < \aleph_0\).
	
	\ref{CAWSHD}\(\implies\)\ref{CNotFirstCountable}:
	Note that, since \(C_{\mathcal I}(X)\) is assumed to be WSHD, \(\mathcal F[C_{\mathcal I}(X)]\) is WSHD by Theorem \ref{thm:HyperSDS}.
	Then \(C_{\mathcal I}(X)\) is not first-countable by \cite[Theorem 12]{RemarksOnSHD}.
\end{proof}
\begin{corollary} \label{cor:CpTheorem}
	For any Tychonoff space \(X\), the following are equivalent:
	\begin{enumerate}[label=(\roman*),ref=(\roman*)]
		\item
		\(X\) is uncountable.
		\item
		\(C_p(X)\) is not metrizable.
		\item
		\(C_p(X)\) is injectively discretely selective.
		\item
		\(C_p(X)\) is discretely selective.
		\item
		\(C_p(X)\) is SHD.
		\item
		\(C_p(X)\) is WSHD.
	\end{enumerate}
\end{corollary}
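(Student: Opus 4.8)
The plan is to derive Corollary \ref{cor:CpTheorem} as a direct specialization of Theorem \ref{thm:MainFunctionTheorem} to the ideal $\mathcal I = [X]^{<\aleph_0}$ of nonempty finite subsets of $X$, supplemented by one elementary observation that identifies condition \ref{SpaceNotA} of that theorem with the uncountability of $X$. First I would dispose of the trivial case in which $X$ is finite: then (i) fails, and $C_p(X)$ is homeomorphic to a finite power of $\mathbb R$, hence metrizable (so (ii) fails), first-countable and $T_1$ with no isolated points (so it is not discretely selective by Proposition \ref{prop:CountableLocalPiBase}, and (iii), (iv) fail), and not WSHD (any sequence confined to a fixed bounded box in $\mathbb R^{|X|}$ has a convergent subsequence, so (v) and (vi) fail); thus all six items fail and the equivalence holds vacuously.

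Next, assume $X$ is infinite and set $\mathcal I = [X]^{<\aleph_0}$. Since $X$ is Tychonoff it is $T_1$, so each finite subset of $X$ is closed, and it is proper because $X$ is infinite; together with closure under finite unions and containment of all singletons, this makes $\mathcal I$ a proper ideal of closed sets. Moreover, each finite subset of $X$ is compact, so $X$ is functionally $\mathcal I$-normal by the remark following the definition of that notion, and each finite set is $\mathbb R$-bounded since its image under any continuous real-valued map is finite. As $C_{\mathcal I}(X)$ is by definition $C_p(X)$, Theorem \ref{thm:MainFunctionTheorem} applies and yields that conditions \ref{SpaceNotA}, \ref{CNotMetrizable}, \ref{CAIDS}, \ref{CADS}, \ref{CASHD}, and \ref{CAWSHD} are all equivalent, the last five being precisely items (ii)--(vi) of the present statement.

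It then remains to show that, for infinite $X$, item (i) is equivalent to condition \ref{SpaceNotA}, i.e.\ to the statement that for every sequence $\langle A_n : n \in \mathbb N \rangle$ of finite subsets of $X$ there is a finite $E \subseteq X$ with $E \not\subseteq A_n$ for all $n$. If $X$ is uncountable, then $\bigcup_{n \in \mathbb N} A_n$ is countable, so I would pick $x \in X \setminus \bigcup_{n \in \mathbb N} A_n$ and take $E = \{x\}$: since $x \notin A_n$, we get $\{x\} \not\subseteq A_n$ for every $n$. Conversely, if $X$ is countably infinite, fix a faithful enumeration $X = \{x_n : n \in \mathbb N\}$ and let $A_n = \{x_1,\ldots,x_n\}$; every finite $E \subseteq X$ then satisfies $E \subseteq A_n$ for all sufficiently large $n$, so condition \ref{SpaceNotA} fails. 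Combined with the previous paragraph, this establishes the equivalence of (i)--(vi). I do not anticipate any genuine difficulty beyond this bookkeeping: all of the topological substance is carried by Theorem \ref{thm:MainFunctionTheorem}, and what is left is only to verify its two hypotheses for the finite-subsets ideal and to note the familiar fact that a set is countable exactly when its finite subsets can be exhausted by an increasing countable chain.
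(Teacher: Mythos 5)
Your proposal is correct and is essentially the argument the paper intends: specialize Theorem \ref{thm:MainFunctionTheorem} to the ideal of finite subsets (whose hypotheses hold since finite sets are compact and have finite, hence bounded, continuous images) and identify condition \ref{SpaceNotA} with uncountability of \(X\). Your separate treatment of finite \(X\), where the finite-subset ideal fails to be proper, is a sensible extra care that the paper leaves implicit.
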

\begin{corollary} \label{cor:CkTheorem}
	For any Tychonoff space \(X\), the following are equivalent:
	\begin{enumerate}[label=(\roman*),ref=(\roman*)]
		\item
		\(X\) is not hemicompact.
		\item
		\(C_k(X)\) is not metrizable.
		\item
		\(C_k(X)\) is injectively discretely selective.
		\item
		\(C_k(X)\) is discretely selective.
		\item
		\(C_k(X)\) is SHD.
		\item
		\(C_k(X)\) is WSHD.
	\end{enumerate}
\end{corollary}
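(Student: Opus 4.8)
\medskip

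The plan is to deduce Corollary \ref{cor:CkTheorem} from Theorem \ref{thm:MainFunctionTheorem} applied with \(\mathcal I\) taken to be the ideal of \emph{all} compact subsets of \(X\), so that \(C_{\mathcal I}(X)\) is exactly \(C_k(X)\). The one wrinkle is that this \(\mathcal I\) is a \emph{proper} ideal of closed sets precisely when \(X\) is not compact (if \(X\) is compact then \(X\in\mathcal I\) is not proper), so I would first dispose of the compact case by hand.

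Suppose first that \(X\) is compact (this subsumes the case of finite \(X\)). Then \(C_k(X)\) is \(C(X)\) with the sup-norm topology, a metrizable topological group in which \(\mathbf 0\) is not isolated; hence (ii) fails, and by Proposition \ref{prop:CountableLocalPiBase} (metrizable gives a countable local base at \(\mathbf 0\) and \(\{\mathbf 0\}\) closed) \(C_k(X)\) is not discretely selective, so (iii) and (iv) fail. Taking \(U_n=\{g:d(g,\mathbf 0)<1/n\}\) for a compatible metric \(d\), every selection \(g_n\in U_n\), and every subselection of it along an infinite set, converges to \(\mathbf 0\); thus there is no divergent selection for \(\langle U_n:n\in\mathbb N\rangle\), so \(C_k(X)\) is neither SHD nor WSHD and (v) and (vi) fail. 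Since a compact space is trivially hemicompact, (i) fails too, and all six statements are (vacuously) equivalent in this case.

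Now suppose \(X\) is not compact. Then every compact subset of \(X\) is proper, so \(\mathcal I:=\{K\subseteq X:K\text{ compact}\}\) is a proper ideal of closed sets: it is closed under finite unions, and it contains every singleton since \(X\) is \(T_1\). As \(X\) is Tychonoff and \(\mathcal I\) consists of compact sets, \(X\) is functionally \(\mathcal I\)-normal (as noted after the definition of functional \(\mathcal I\)-normality), and each member of \(\mathcal I\) is \(\mathbb R\)-bounded since its continuous image in \(\mathbb R\) is compact, hence bounded. Therefore Theorem \ref{thm:MainFunctionTheorem} applies to \(C_{\mathcal I}(X)=C_k(X)\), and its conditions \ref{CNotMetrizable}, \ref{CAIDS}, \ref{CADS}, \ref{CASHD}, \ref{CAWSHD} --- which are verbatim the conditions (ii)--(vi) of the present corollary --- are all equivalent to its condition \ref{SpaceNotA}. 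It then remains to note that for this particular \(\mathcal I\) the negation of \ref{SpaceNotA} asserts exactly that some countable family \(\{A_n:n\in\mathbb N\}\) of compact sets is cofinal in the compact subsets of \(X\) under \(\subseteq\), i.e.\ that \(X\) is hemicompact; hence \ref{SpaceNotA} is equivalent to (i). Chaining these equivalences completes the proof. The step requiring the most care is not technical but organizational: checking that the ideal used to \emph{define} \(C_k(X)\) genuinely satisfies the hypotheses of Theorem \ref{thm:MainFunctionTheorem} exactly in the non-compact case, and verifying that ``\(X\) hemicompact'' and the failure of \ref{SpaceNotA} are the same statement.
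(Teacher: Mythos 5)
Your proof is correct and follows exactly the route the paper intends: the corollary is obtained by applying Theorem \ref{thm:MainFunctionTheorem} to the ideal of compact subsets and identifying the negation of condition \ref{SpaceNotA} with hemicompactness. Your separate treatment of the compact case is a worthwhile extra precaution the paper elides, since for compact \(X\) the ideal of all compact subsets contains \(X\) itself and so is not a \emph{proper} ideal in the paper's sense; your direct verification that all six conditions fail there closes that small gap cleanly.
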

We elaborate now on the general non-transfer of the discretely selective and SHD properties between \(X\) and \(C_{\mathcal I}(X)\).
\begin{example}
	Note that \(\mathbb R\) is neither discretely selective nor SHD.
	However, by Corollary \ref{cor:CpTheorem}, \(C_p(X)\) is both discretely selective and SHD.
\end{example}
\begin{example}
	Note that \(\mathbb N\) is discretely selective but \(C_p(\mathbb N)\) is not discretely selective.
\end{example}
\begin{question}
	Is there a discretely selective space \(X\) without isolated points and an ideal \(\mathcal I\) consisting of \(\mathbb R\)-bounded
	sets for which \(C_{\mathcal I}(X)\) is not discretely selective?
\end{question}
\begin{example}
	If \(X\) is a compact SHD space (see, for example, Examples \ref{ex:CechStoneOmega} and \ref{ex:CirclePower}),
	then \(C_k(X)\) is clearly first-countable, and hence neither discretely selective nor SHD.
	Note however that \(X\) is not discretely selective.
\end{example}

\section*{Acknowledgements}

The author would like the thank the anonymous referee for comments leading to the paper's improvement.

\providecommand{\bysame}{\leavevmode\hbox to3em{\hrulefill}\thinspace}
\providecommand{\MR}{\relax\ifhmode\unskip\space\fi MR }
\providecommand{\MRhref}[2]{%
  \href{http://www.ams.org/mathscinet-getitem?mr=#1}{#2}
}
\providecommand{\href}[2]{#2}

\end{document}